%% file: main.tex
\documentclass[twoside,11pt]{article}

\input{packages}
\input{commands}

\jmlrheading{24}{2026}{1-\pageref{LastPage}}{03/26}{-}{}{Florian Heinrichs}

\ShortHeadings{Testing for Stable Intervals in Non-Stationary Time Series}{Heinrichs}
\firstpageno{1}

\begin{document}
	
\title{Testing for Stable Intervals in Non-Stationary Time Series}

\author{\name Florian Heinrichs \email f.heinrichs@fh-aachen.de \\
	\addr FH Aachen\\
	Heinrich-Mußmann-Straße 1\\
	52428 Jülich, Germany}

\editor{-}

\maketitle

\begin{abstract}%
	Many time series are not stable over their full observation horizon, but may contain scientifically meaningful periods during which a signal remains stable up to a prescribed tolerance. We formulate this as an existence test for stable intervals in a non-stationary regression model with dependent, locally stationary errors. For a signal $d$ derived from the mean function, including deviations from reference levels and derivatives, stability over duration $\delta$ is encoded by $d_\infty=\inf_{t\in[0,1-\delta]}\sup_{s\in[t,t+\delta]}|d(s)|$.
	The hypothesis $d_\infty\ge\Delta$ states that no interval of length $\delta$ remains within the tolerance $\Delta$, while rejection provides evidence for the existence of a relevant stable period. We estimate $d$ by local linear regression and construct plug-in tests for $d_\infty$.
	The asymptotic distribution is determined only by near-extremal windows at which the minimax functional is attained. We formalize this localization through extremal sets and derive Gaussian and extreme value approximations for kernel estimators over possibly shrinking index sets with time-varying long-run variance. The resulting tests are consistent and have asymptotic level control. The theory also extends supremum-based relevant-change tests to time-varying long-run variance and derivative-based hypotheses. Simulations and applications to physiological and engineering time series illustrate the method. 
\end{abstract}

\begin{keywords}
	Stable intervals; locally stationary time series; extreme value theory; relevant change testing; kernel smoothing.
\end{keywords}

%

\maketitle 

\input{sec_intro}

\input{sec_methodology}

\input{sec_empirical_results}

\input{sec_proofs}

\bibliography{bibliography}

\newpage

\appendix

\input{app_empirical_results}

\end{document}

%% file: packages.tex
\usepackage{blindtext}

\usepackage[preprint]{jmlr2e}

\usepackage[utf8]{inputenc}
\usepackage[T1]{fontenc}
\usepackage{lmodern}
\usepackage{amsmath}
\usepackage{color}
\usepackage{bm}
\usepackage{booktabs}
\usepackage{dsfont}
\usepackage{hyperref}
\usepackage{setspace}

%% file: commands.tex
\newcommand{\eps}{{\varepsilon}}
\renewcommand{\phi}{\varphi}
\newcommand{\R}{\mathbb{R}}
\newcommand{\Z}{\mathbb{Z}}
\newcommand{\N}{\mathbb{N}}
\newcommand{\pr}{\mathbb{P}}       
\newcommand{\ex}{\mathbb{E}}       
\newcommand{\var}{\textnormal{Var}} 
\newcommand{\cov}{\textnormal{Cov}}

\newcommand{\Nc}{\mathcal{N}}
\newcommand{\Uc}{\mathcal{U}}
\newcommand{\Fc}{\mathcal{F}}

\newcommand{\Kc}{\mathcal{K}}
\newcommand{\Dc}{\mathcal{D}}
\newcommand{\Ac}{\mathcal{A}}
\newcommand{\Bc}{\mathcal{B}}
\newcommand{\Ec}{\mathcal{E}}
\newcommand{\Oc}{\mathcal{O}}
\newcommand{\Pc}{\mathcal{P}}
\newcommand{\Mc}{\mathcal{M}}
\newcommand{\Tc}{\mathcal{T}}

\newcommand{\diff}{{\,\mathrm{d}}}

\newcommand{\convw}{\rightsquigarrow}                           
\newcommand{\convp}{\stackrel{\pr}{\longrightarrow}} 
   
\newcommand{\id}{\mathds{1}}

\DeclareMathOperator*{\argmin}{argmin} 
 
\DeclareMathOperator*{\sgn}{sgn}
\DeclareMathOperator*{\dist}{dist}

\newtheorem{assumption}{Assumption}

%% file: sec_intro.tex
\section{Introduction} \label{sec:intro}

Many time series are collected from processes that are not expected to be stable throughout. A sensor may drift before reaching a reliable operating regime, a drug concentration may rise before entering a therapeutic range, and the validation loss of a learning algorithm initially decreases before settling near a plateau \citep{cui2019, fan2014, amari1995, prechelt1998}. In such examples, the question is not whether the whole trajectory is constant. The question is whether there exists a period, long enough to be useful, during which the signal is stable up to a scientifically meaningful tolerance.

We study this problem in the model
\begin{equation*}
	X_{i,n} = \mu(i/n) + \eps_{i,n}, \qquad i=1,\dots,n,
\end{equation*}
where $\mu$ is an unknown smooth mean function and the errors may be dependent and locally stationary. The object of interest is a stability signal $d$ derived from $\mu$. For instance, $d=\mu'$ measures local flatness, $d(t)=\mu(t)-g(\mu)$ measures deviation from a fixed or estimated reference level, and $d(t)=\mu(t)-g(\mu,t)$ allows time-dependent or locally chosen reference levels. In the latter cases, $g$ may be known or estimated, and we assume that a suitable estimator is available with sufficiently fast convergence. 

For a minimal duration $\delta \ge 0$ define
\begin{equation*}
	d_\infty = \inf_{t\in[0,1-\delta]} \sup_{s\in[t,t+\delta]} |d(s)|.
\end{equation*}
The inner supremum asks how far the signal $d$ can deviate from zero within a candidate interval $[t,t +\delta]$. The outer infimum then searches over all possible starting points and selects the best such interval. Thus $d_\infty < \Delta$, for some tolerance $\Delta > 0$, means that there exists at least one interval of length $\delta$ on which the signal remains within the prescribed bound $\Delta$. Figure \ref{fig:initial_example} illustrates the construction. For each candidate starting point $t$, the largest deviation of $d$ over the window $[t, t+\delta]$ is recorded, and $d_\infty$ is the smallest of these window-wise deviations. We therefore test
\begin{equation*}
	H_0: d_\infty \ge \Delta \quad\text{vs.}\quad H_1: d_\infty < \Delta,
\end{equation*}
so that rejecting $H_0$ provides evidence for the existence of a relevant stable period. 

\begin{figure}[t]
	\centering
	\includegraphics[width=\textwidth]{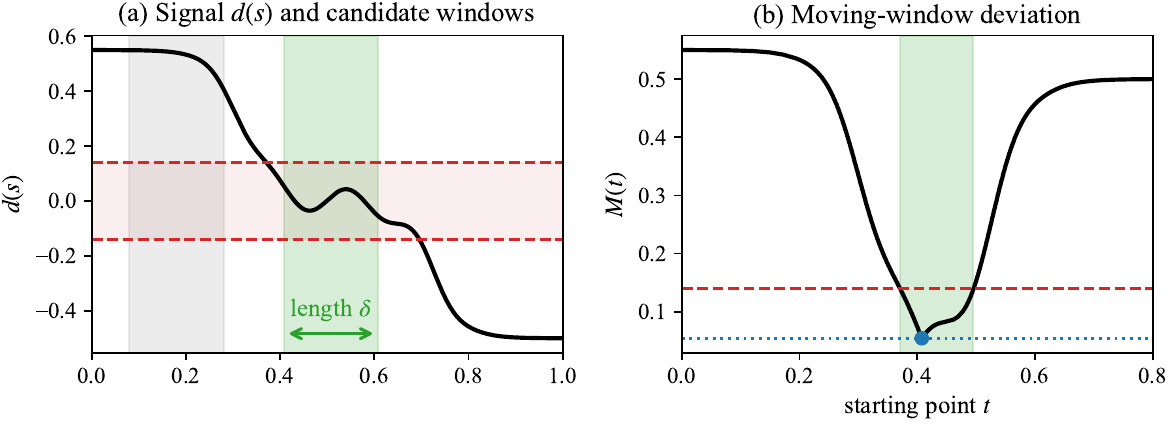}
	\caption{Stable-interval functional. Panel (a) shows a signal $d$ with tolerance band $[-\Delta,\Delta]$. The green window has length $\delta$ and remains inside the tolerance band, whereas the gray window does not. Panel (b) shows the corresponding moving-window deviation $M(t):=\sup_{s\in[t,t+\delta]}|d(s)|$. The stable-interval deviation is $d_\infty=\inf_t M(t)$. Since $d_\infty<\Delta$, at least one interval of length $\delta$ is stable.}
	\label{fig:initial_example}
\end{figure}

This testing problem is different from the usual question of detecting changes in a non-stationary mean. In relevant-change testing, one often asks whether the curve ever deviates from a reference by more than $\Delta$. This naturally leads to a global maximum such as
\begin{equation*}
	\sup_{t\in[0,1]}|\mu(t)-g(\mu)|,
\end{equation*}
because a single large deviation anywhere in time is enough to make the change relevant. Stable-interval detection reverses the logic. We are not looking for the worst time point, but for the best interval. Hence we first measure the worst deviation inside each candidate interval and then minimize this quantity over all intervals. This produces the inf-sup functional above and makes the problem genuinely different from testing for no change or for a relevant global deviation.

Existing methods only partially address this problem. Procedures for steady-state detection target similar scientific questions, but commonly rely on independent, identically distributed errors or do not provide theoretical guarantees \citep{cao1995,kelly2013}. Change point methods allow richer dependence structures, but they usually search for abrupt changes and impose piecewise stationary models \citep{basseville1993,fryzlewicz2014,frick2014,cho2015}. Locally stationary time series methods allow dependence and time-varying behavior, but existing tests mainly address whether changes occur somewhere or whether deviations are relevant \citep{vogt2015, dette2019,heinrichs2021,mies2023}. To the best of our knowledge, there is no general test for the existence of a relevant stable period under dependent and non-stationary errors.

Our contribution is to develop such a test. We estimate $d$ by local linear regression and use the resulting estimator to construct a test statistic based on $\hat d_{n,\infty}$, the plug-in estimator of $d_\infty$. The calibration accounts for serial dependence through a local long-run variance $\sigma^2(t)$, which may vary over time. A key feature of the theory is that the asymptotic behavior is determined only by points near the intervals where the minimum in $d_\infty$ is attained, rather than by the full time domain. We formalize this through an extremal set and a near-extremal set, and derive an extreme value approximation over these sets. The resulting test has asymptotic level $\alpha$ under $H_0: d_\infty \ge \Delta$ and is consistent against alternatives with $d_\infty < \Delta$.
Since extreme value approximations may converge slowly, we also discuss a Gaussian calibration based on the finite-sample Gaussian approximation underlying the limit theory.

The technical contribution is an extreme value theory for kernel-based estimators over possibly shrinking near-extremal sets with time-varying long-run variance. This theory covers both extremal intervals, which arise when a stable period is attained over a non-degenerate region, and isolated extremal points, which arise under tangential contact with the tolerance boundary. The general approximation results are formulated separately, since they may be useful beyond the stable-interval problem. As a by-product, the same arguments yield an extension of the sup-norm relevant-change tests by \cite{bucher2021} to settings with time-varying long-run variance and to hypotheses based on the derivative $\mu'$. 

The remainder of the paper is organized as follows. Section \ref{sec:method} introduces the stable-interval functional and relevant estimators. Section \ref{sec:localized_test} gives the test statistic and main inference results for stable-interval detection. Section \ref{sec:gaussian_evt} develops the Gaussian and extreme value approximations on which the test is based, and discusses the connection to relevant-change testing. Section \ref{sec:empirical} presents simulations and two data examples, and the proofs are collected in Section \ref{sec:proof}.

%% file: sec_methodology.tex
\section{Stable-interval Target and Estimation}\label{sec:method}


We now introduce the stable-interval target and its estimator. Throughout, the observed time series is modeled as
\begin{equation}\label{eq:additive_model}
	X_{i, n} = \mu(i/n) + \eps_{i, n}, \qquad i = 1,\dots, n,
\end{equation}
where $\mu$ is a deterministic mean function and $\eps = (\eps_{i, n})_{1\le i\le n, n\in\N}$ is a triangular array of centered errors. The mean function is sufficiently smooth to be reasonably estimated, as specified in Assumption \ref{assump:mu}, and the error process is locally stationary, as described by Assumption \ref{assump:error}.

\subsection{Stable-interval Functional and Hypotheses}

The target of inference is not necessarily the mean function itself, but a signal $d:[0,1]\to\R$ whose smallness characterizes stability. 
Two leading examples are
\begin{equation*}
	d(t) = \mu(t) - g(\mu,t) \qquad \mathrm{and} \qquad d(t) = \mu'(t).
\end{equation*}
The first choice describes stability around a prescribed, time-varying, or estimated reference level, while the second choice describes level-free plateau detection through local flatness. The map $g$ may be known, for example a deterministic reference curve, or may be a functional of $\mu$ that has to be estimated. The assumptions below only require that the resulting estimator of $d$ admits the stochastic expansion specified in Section~\ref{subsec:localized_sets}.

Fix a minimal duration $\delta\in[0,1]$ and a tolerance $\Delta>0$. 
We say that the signal contains a stable interval of length at least $\delta$ and tolerance $\Delta$ if there exists an interval of length $\delta$ on which $|d|$ remains below $\Delta$. 
This is captured by the \textit{minimax deviation}
\begin{equation}\label{eq:def_d}
	d_\infty = \inf_{t\in[0, 1-\delta]} \sup_{s\in[t, t+\delta]} |d(s)|.
\end{equation}
The inner supremum measures the largest deviation within a candidate interval, whereas the outer infimum selects the best such interval. 
Thus, $d_\infty < \Delta$ is equivalent to the existence of at least one interval of length $\delta$ on which the signal remains inside the tolerance band $[-\Delta,\Delta]$, such that $d$ is ``almost constant''. 
We therefore consider the one-sided testing problem
\begin{equation*}
	H_0: d_\infty \ge  \Delta \quad \mathrm{vs.} \quad H_1: d_\infty < \Delta.
\end{equation*}
The null hypothesis says that no stable interval of length $\delta$ exists at tolerance $\Delta$, while rejection provides evidence for the existence of such an interval.

\subsection{Estimation of the Signal}
\label{subsec:signal_estimation}

Inference on $d_\infty$ is based on a plug-in estimator of the signal $d$. Let $K$ be a kernel and write $K_h(x)=K(x/h)$. For a bandwidth $h_n \to 0$, the local linear estimator of $\mu$ and its first derivative is defined by
\begin{equation*}
	\big(\hat{\mu}_{h_n}(t), \widehat{\mu'}_{h_n}(t)\big) = \argmin_{b_0, b_1\in\R} \sum_{i=1}^n \big[X_{i, n} - b_0 - b_1 \big(\tfrac{i}{n} - t\big)\big]^2 K_{h_n} \big(\tfrac{i}{n} - t\big),
\end{equation*}
see, for example, \cite{fan1996}. 
For the level case, we use the jackknife bias-reduced estimator
\begin{equation*}
	\tilde{\mu}_{h_n}(t) = 2 \hat{\mu}_{h_n/\sqrt{2}}(t) - \hat{\mu}_{h_n}(t),
\end{equation*}
which removes the leading second-order bias term under the smoothness assumptions imposed below \citep{schucany1977}. For the derivative-based plateau problem $d = \mu'$, we use the estimator $\hat{d}_n = \widehat{\mu'}_{h_n}$. For the level-stability case $d = \mu - g(\mu, \cdot)$, we can either plug $\tilde{\mu}_{h_n}$ into $g$ or directly estimate $g(\mu, \cdot)$, which often results in a faster rate of convergence. Since the particular estimator depends on the specific choice of $g(\mu, \cdot)$, we only assume the existence of an estimator $\hat{g}_n$ that converges sufficiently fast. In this case, $\hat{d}_n = \tilde{\mu}_{h_n} - \hat{g}_n$. 
The stable-interval deviation is then estimated by the plug-in statistic $\hat{d}_{n, \infty} = \inf_{t\in[0, 1-\delta]} \sup_{s\in[t, t+\delta]} |\hat{d}_n(s)|$. The estimator is the plug-in analogue of the minimax deviation in \eqref{eq:def_d}.

For the asymptotic theory we exclude boundary-attainment effects. Specifically, we assume that $\delta < 1$ and there exists $\eta > 0$ such that all minimizers of the moving-window deviation $M(t)=\sup_{s\in[t,t+\delta]} |d(s)|$ are contained in $[\eta,1-\delta-\eta]$. The empirical statistic is evaluated on the trimmed set $I_n=[h_n,1-\delta-h_n]$, that is, $\hat d_{n,\infty}^{\mathrm{tr}} = \inf_{t\in I_n} \sup_{s\in[t,t+\delta]} |\hat d_n(s)|$. Since $h_n \to 0$, the trimming does not affect the population minimax deviation for all sufficiently large $n$ under the interior-attainment condition. To simplify notation, the superscript is suppressed in the sequel.

For the asymptotic theory it is useful to write the two cases in a common notation. Let $r_n$ denote the pointwise rate of $\hat d_n$, and let $K^*$ denote the corresponding equivalent kernel. In the level case, $r_n=\sqrt{nh_n}$ and $K^*(x)=2\sqrt 2 K(\sqrt 2 x)-K(x)$, where $K^*$ is the equivalent kernel of the jackknife estimator. In the derivative case, $r_n=\sqrt{nh_n^3}$ and $K^*(x)=xK(x) / \int y^2 K(y) \diff y$. 
All normalizations below are written in terms of $r_n$ and $K^*$. In the level case this reduces to the familiar $\sqrt{nh_n}/\|K^*\|_2$ scaling used in kernel supremum theory. 
We impose the following condition on the kernel.

\begin{assumption} \label{assump:kern}
	The kernel $K:\R\to \R$ is symmetric, supported on $[-1,1]$, twice differentiable and satisfies $\int_{-1}^1 K(x)\diff x = 1$. Moreover, the equivalent kernel $K^*$ satisfies $\|K^*\|_2> 0$ and $\| (K^*)'\|_2^2 / \|K^*\|_2^2 < 32$.
\end{assumption}

\subsection{Locally Stationary Errors and Long-run Variance}
The errors in \eqref{eq:additive_model} may be serially dependent and non-stationary. 
We use the locally stationary framework of \cite{zhou2009}. 
Specifically,
\begin{equation*}
	\eps_{i, n} = H(i/n, \Fc_i), \qquad \Fc_i = (\eta_k)_{k\le i}, 
\end{equation*}
where $(\eta_i)_{i\in\Z}$ is an i.i.d. innovation sequence and $H:[0, 1] \times \R^\infty \to \R$ is a measurable (possibly non-linear) filter that is continuous in its first argument. Let $\Fc_i^*$ denote the coupled version of $\Fc_i$ obtained by replacing $\eta_0$ with an independent copy $\eta_0^*$. 
The \textit{physical dependence measure} of $H$ with $\sup_{t\in[0, 1]} \ex[H^2(t, \Fc_i)] < \infty$ is defined by 
\begin{equation*}
	\delta(H, i) = \sup_{t\in[0, 1]} \ex\big[\big(H(t, \Fc_i) - H(t, \Fc_i^*)\big)^2\big]^{1/2}.
\end{equation*}
The quantity $\delta(H, i)$ measures the strength of serial dependence $H(t, \Fc_i)$ and plays a similar role as mixing coefficients. Indeed, under suitable regularity conditions, one may replace weak physical dependence by $\beta$-mixing, or vice-versa \citep{hill2025,heinrichs2026}.

\begin{assumption} \label{assump:error}
	The triangular array $\{(\eps_{i, n})_{1\le i\le n}\}_{n\in\N}$ in \eqref{eq:additive_model} is centered and locally stationary with map $H$, such that the following conditions are satisfied:
	\begin{enumerate}
		\item $\Theta_m = \sum_{i=m}^\infty \delta(H, i)$ converges to zero as $m \to\infty$.
		\item The map $H$ is Lipschitz continuous with respect to the $L^2$-norm, i.\,e.,
		\begin{equation*}
			\sup_{0 \le s < t \le 1} \ex \big[ \big( H(t, \Fc_i) - H(s, \Fc_i) \big)^2\big]^{1/2} / |t - s| < \infty.
		\end{equation*}		
		and fourth moments are uniformly bounded, so that $\sup_{t\in[0, 1]} \ex[H^4(t, \Fc_0)] < \infty$.
		\item The (local) long-run variance of $H$, defined as
		\begin{equation*}
			\sigma^2(t) = \sum_{i=-\infty}^\infty \cov\big(H(t, \Fc_i), H(t, \Fc_0)\big),
		\end{equation*}
		exists for all $t\in[0, 1]$, is Lipschitz continuous and bounded away from zero.
	\end{enumerate}
\end{assumption}
The assumption is rather standard in the context of local stationarity \citep[see, e.\,g.,][]{zhou2009}.

The calibration of the test requires estimating the time-varying long-run variance. 
We use the difference-based local estimator of \cite{dette2019}. 
Let $\tau_n \to 0$ be a positive bandwidth, $m_n \to\infty$ a positive integer sequence with $m_n =o(n)$, and define partial sums $S_{j, k} = \sum_{i=j}^{k} X_{i, n}$. For $t\in [m_n/n, 1 - m_n/n]$ set
\begin{equation*}
	\hat{\sigma}_n^2(t) = \sum_{i=1}^n \frac{K_{\tau_n}(i/n - t)}{\sum_{j=1}^{n}K_{\tau_n}(j/n - t)} \cdot \frac{(S_{i-m_n+1,i} - S_{i+1, i+m_n})^2}{2m_n},
\end{equation*}
and extend the estimator at the boundaries by $\hat{\sigma}_n^2(t) = \hat{\sigma}_n^2(m_n/n)$, for $t < m_n/n$, and $\hat{\sigma}_n^2(t) = \hat{\sigma}_n^2(1-m_n/n)$, for $t > 1-m_n/n$. By Theorem 6.4 of \cite{bucher2021}, the prediction error is uniformly of order $\Oc_\pr\Big(\tfrac{m_n^{1/4}}{\sqrt{n} \tau_n} + \tfrac{1}{m_n} + \tau_n^2 + \tfrac{m_n^{5/2}}{n}\Big)$. 
This error is minimized whenever $m_n = n^{2/7}$ and $\tau = n^{-1/7}$, in which case
\begin{equation} \label{eq:rate_sigma}
	\sup_{t\in [\gamma_n, 1-\gamma_n]} |\sigma^2(t) - \hat{\sigma}_n^2(t)| 
	= \Oc_\pr(n^{-2/7}),
\end{equation}
for $\gamma_n = \tau_n + m_n / n$.

\section{Extremal Localization and the Stable-interval Test}
\label{sec:localized_test}
The plug-in statistic $\hat{d}_{n, \infty}$ is not governed by the full trajectory of $\hat{d}_n$. Its asymptotic behavior is determined only by points close to the minimax deviation $d_\infty$. This section formalizes this localization and derives the resulting test.

\subsection{Extremal and Near-extremal Sets}

Let $\Ec_1 = \Ec_1^+ \cup \Ec_1^-$, where
\begin{equation*}
	\Ec_1^\pm = \{t\in[0, 1] : \pm d(t) = d_\infty\}.
\end{equation*}
The set $\Ec_1$ records the time points at which the signed signal reaches the level relevant for the minimax deviation. 
To encode the window structure, define
\begin{equation*}
	\Ec_3 = \Big\{ t\in[0, 1-\delta]: \sup_{s\in[t, t+\delta]} |d(s)| = d_\infty \Big\},
\end{equation*}
and $\Ec_2 = \Ec_3 + [0, \delta]$. The extremal set is $\Ec = \Ec_1 \cap \Ec_2$. Thus, $\Ec$ consists of those points which both attain the critical signed level and belong to at least one optimal window. 
With this notation, we can eliminate the inner supremum and absolute value, and write
\begin{equation*}
	d_\infty = \inf_{t\in\Ec} |d(t)| = \min\{ \inf_{t \in \Ec_1^+ \cap \Ec_2} d(t),  \inf_{t \in \Ec_1^- \cap \Ec_2} - d(t) \}.
\end{equation*}
Because $\hat d_n$ is random, the asymptotic behavior of $\hat d_{n,\infty}$ is determined not only by $\Ec$, but also by shrinking neighborhoods of $\Ec$. 
For a positive sequence $\phi_n \to 0$, define
$\Ec(\phi_n) = \Ec_1(\phi_n) \cap \Ec_2(\phi_n)$, for 
\begin{equation*}
	\Ec_1(\phi_n) = \Ec_1^+(\phi_n) \cup \Ec_1^-(\phi_n), \qquad \Ec_1^\pm(\phi_n) = \{t\in[0, 1] : |d_\infty \mp d(t) | \le \phi_n\}
\end{equation*}
and
\begin{equation*}
	\Ec_2(\phi_n) = \Ec_3(\phi_n) + [0, \delta], \quad \Ec_3(\phi_n) = \{ t\in[0, 1-\delta]: \sup_{s\in[t, t+\delta]} |d(s)| - d_\infty  \le \phi_n \}.
\end{equation*}
The following regularity condition controls the geometry of $\Ec(\phi_n)$ as $\phi_n \to 0$. 
For $x\in[0,1]$, $\eta>0$, write $U_\eta^1(x) = (x, x+\eta)$ and $U_\eta^{-1}(x) = (x-\eta, x)$, and let $(\partial \Ec)^{ext}$ denote the exterior boundary sides,
\begin{equation*}
	(\partial \Ec)^{ext} = \{(x, \nu) \in \partial \Ec \times \{-1, 1\}| \exists U_\eta^\nu(x): U_\eta^\nu(x) \cap \Ec = \emptyset\}.
\end{equation*}

\begin{assumption} \label{assump:mu}
	\begin{enumerate}
		\item The function $\mu$ is twice differentiable with Lipschitz continuous second derivative. Moreover, it exists some $\eta > 0$ such that all minimizers of the moving-window deviation $M(t)=\sup_{s\in[t,t+\delta]} |d(s)|$ are contained in $[\eta, 1-\delta-\eta]$.
		\item The extremal set $\Ec$ is a finite union of closed intervals and isolated points with $\lambda(\Ec) > 0$.  Moreover, there exist constants $0 < c < \infty$ and $\gamma > 0$, such that for every $(x, \nu) \in (\partial\Ec)^{ext}$
		\begin{equation} \label{eq:boundary_condition}
			c |t-x|^{\alpha_{x,\nu}} \le \big| d_\infty-|d(t)|\big|,
		\end{equation}
		for some exponent $\alpha_{x,\nu}\in(0,\infty)$ and all $t\in U_\gamma^\nu(x)$.
		\item The extremal set $\Ec$ is a finite union of closed intervals and isolated points. 
		There exist constants $0 < c < C < \infty$ and $\phi_0, \gamma > 0$, such that for every $(x, \nu) \in (\partial\Ec)^{ext}$ and all $0<\phi\le\phi_0$,
		\begin{equation}
			\Ec(\phi)\cap U_\gamma^\nu(x) = U_{r_{x,\nu}(\phi)}^\nu(x), \label{eq:boundary_condition2}
		\end{equation}
		where $r_{x,\nu}$ is continuously differentiable on $(0,\phi_0]$ and satisfies
		\begin{equation*}
			c \phi^{1/\alpha_{x,\nu}} \le r_{x,\nu}(\phi) \le C \phi^{1/\alpha_{x,\nu}},
			\qquad 0 \le r_{x,\nu}'(\phi) \le C \phi^{1/\alpha_{x,\nu} - 1}.
		\end{equation*}
	\end{enumerate}
\end{assumption}
The global smoothness of $\mu$ in Assumption \ref{assump:mu} and the Lipschitz continuity of $\sigma^2$ in Assumption \ref{assump:error} are imposed mainly to keep the notation and proofs transparent. The results should extend to piecewise smooth $\mu$ and piecewise Lipschitz $\sigma^2$, provided the break points are separated from the relevant near-extremal sets, or are treated as additional boundary points in the localization and block arguments. We do not pursue this extension here.

Due to the boundary condition in part 1 of the assumption and Proposition \ref{prop:extremal_set}, $\Ec(\phi_n) \subset [h_n, 1-h_n]$ for all large $n$.

In the derivative case $d = \mu'$, such that $\hat{d}_n$ only depends on the local linear estimator. Contrarily, in the level case, $d= \mu - g(\mu, \cdot)$, such that $\hat{d}_n$ depends on an estimator $\hat{g}_n$.

\begin{assumption} \label{assump:g_estimator}
	The estimator $\hat{g}_n$ of the functional $g(\mu, \cdot)$ satisfies
	\begin{equation*}
		\sup_{t\in[0, 1]} |\hat{g}_n(t) - g(\mu, t)| = o_\pr\bigg(\frac{1}{\sqrt{nh_n |\log(h_n)|}}\bigg), \qquad n\to\infty,
	\end{equation*}
	where $h_n$ denotes the bandwidth of the local linear estimator.
\end{assumption}

Since $\Ec$ is a finite union of intervals and isolated points, it can be expressed as
\begin{equation*}
	\Ec = \bigcup_{i=1}^{L} [a_i, b_i],
\end{equation*}
for $a_i \le b_i$, $i=1,\dots, L$. Crucially, we have two different regimes. If $\Ec$ has a positive length, its estimation substantially simplifies and rates between different sequences become less restrictive. In this case, we only require Assumption \ref{assump:mu} (2). Contrarily, Assumption \ref{assump:mu} (3) is stronger, but allows the extremal set to consist only of isolated points. Under either condition, by Proposition \ref{prop:extremal_set},
\begin{equation*}
	\Ec(\phi_n) = \bigcup_{i=1}^{L} [a_{i, n}, b_{i, n}] 
\end{equation*}
with $U_{\phi_n/L_d}(\Ec) \subset \Ec(\phi_n)$ for all large $n$, where $L_d$ denotes the Lipschitz constant of $d$.

The latter inclusion implies that the shortest component of $\Ec(\phi_n)$ is at least of order $\phi_n$. More precisely, since $\Ec$ has only finitely many components, there is a constant $c_\theta>0$ such that, for all sufficiently large $n$,
\begin{equation} \label{eq:def_theta}
	\theta_n:=\min_i |b_{i,n}-a_{i,n}|\ge c_\theta\phi_n.
\end{equation}
Thus the block condition $\rho_n=o(\theta_n)$, introduced below, is implied by the simpler sufficient condition $\rho_n=o(\phi_n)$. This lower bound uses only Lipschitz continuity of $d$ and is separate from the polynomial boundary exponents in Assumption \ref{assump:mu}.

\subsection{Estimating the Extremal Region} \label{sec:est_ext_set}

The set $\Ec(\phi_n)$ is unknown and is replaced by its plug-in analogue. Define  $\hat{\Ec}(\phi_n) = \hat{\Ec}_1(\phi_n) \cap \hat{\Ec}_2(\phi_n)$, 
where
\begin{equation*} 
	\hat{\Ec}_1(\phi_n) = \hat{\Ec}_1^+(\phi_n) \cup \hat{\Ec}_1^-(\phi_n), \qquad
	\hat{\Ec}_1^\pm(\phi_n) = \{t\in[0, 1] : |\hat{d}_{n,\infty} \mp \hat{d}_n(t) | \le \phi_n\},
\end{equation*}
and
\begin{equation*} 
	\hat{\Ec}_2(\phi_n) = \hat{\Ec}_3(\phi_n) + [0, \delta], \quad \hat{\Ec}_3(\phi_n) = \{ t\in[0, 1-\delta]: \sup_{s\in[t, t+\delta]} |\hat{d}_n(s)| - \hat{d}_{n, \infty} \le \phi_n \}.
\end{equation*}
Recall that $r_n=\sqrt{nh_n}$ in the level case, and $r_n=\sqrt{nh_n^3}$ in the derivative case.

\begin{proposition}[Consistency of the estimated near-extremal set] \label{prop:E_sandwich}
	Let Assumptions \ref{assump:kern}, \ref{assump:error}, \ref{assump:mu} (1) and \ref{assump:g_estimator} be satisfied and $(\phi_n)_{n\in\N}, (e_n)_{n\in\N}$ be positive sequences with $\phi_n, e_n \to 0$ and, $\sqrt{|\log(h_n)|}/(\phi_n r_n) = o(e_n)$. Then, 
	\begin{equation*}
		\Ec\big(\phi_n (1 - e_n)\big) \subset \hat{\Ec}(\phi_n) \subset \Ec\big(\phi_n (1 + e_n)\big),
	\end{equation*}
	with probability converging to $1$.
\end{proposition}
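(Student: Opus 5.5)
The plan is to reduce both inclusions to a single uniform deviation bound. Define $\varepsilon_n = \sup_{t\in[h_n,1-h_n]} |\hat d_n(t) - d(t)|$. First I will show that $\varepsilon_n = \Oc_\pr\big(\sqrt{|\log(h_n)|}/r_n\big)$, so that by the rate condition $\varepsilon_n = o_\pr(\phi_n e_n)$. This is the standard uniform rate for local linear (or jackknife) estimators under Assumptions \ref{assump:kern} and \ref{assump:error}. The stochastic part $\hat d_n - \ex\hat d_n$ is a kernel-weighted sum of locally stationary errors. I would control it by a Gaussian or strong approximation (as in \citealp{zhou2009}) combined with a chaining/maximal inequality over a grid of mesh $h_n^2$, using the Lipschitz property of $K^*$ to pass from the grid to all $t$. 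The bias is $\Oc(h_n^2)$ in the level case after jackknifing; this needs Lipschitz $\mu''$, so the bias is actually $\Oc(h_n^3)$. In the derivative case it is $\Oc(h_n^2)$. I assume bandwidth conditions are in force that make these terms $o(\sqrt{|\log h_n|}/r_n)$. In the level case, Assumption \ref{assump:g_estimator} makes the $\hat g_n$ error negligible at this rate. Since $\Ec(\phi)\subset[h_n,1-h_n]$ for large $n$ by the interior-attainment part of Assumption \ref{assump:mu} (1), restricting to this range is harmless; the trimmed statistic is used throughout.

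Second, I will propagate $\varepsilon_n$ to the functionals. The map $f\mapsto \inf_t\sup_{s\in[t,t+\delta]}|f(s)|$ is $1$-Lipschitz in sup-norm, so $|\hat d_{n,\infty} - d_\infty| \le \varepsilon_n$. Likewise, for every $t$, $\big|\sup_{s\in[t,t+\delta]}|\hat d_n(s)| - \sup_{s\in[t,t+\delta]}|d(s)|\big| \le \varepsilon_n$. Hence, on the event $\{2\varepsilon_n \le \phi_n e_n\}$, which has probability tending to one, I obtain the following chain:
\begin{equation*}
|\hat d_{n,\infty} \mp \hat d_n(t)| \le |d_\infty \mp d(t)| + 2\varepsilon_n, \qquad
\sup_{s\in[t,t+\delta]}|\hat d_n(s)| - \hat d_{n,\infty} \le \sup_{s\in[t,t+\delta]}|d(s)| - d_\infty + 2\varepsilon_n.
\end{equation*}
The reverse inequalities hold with the roles of the estimated and true quantities exchanged.

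Third, the set inclusions follow directly. Take $t\in \Ec_1^\pm(\phi_n(1-e_n))$. Then $|\hat d_{n,\infty}\mp\hat d_n(t)| \le \phi_n - \phi_n e_n + 2\varepsilon_n \le \phi_n$, so $t\in\hat\Ec_1^\pm(\phi_n)$. Similarly, $\Ec_3(\phi_n(1-e_n))\subset\hat\Ec_3(\phi_n)$, and adding $[0,\delta]$ preserves inclusion, giving $\Ec_2(\phi_n(1-e_n))\subset\hat\Ec_2(\phi_n)$. Intersecting yields the left inclusion. For the right inclusion, a point $t\in\hat\Ec_1^\pm(\phi_n)$ satisfies $|d_\infty\mp d(t)|\le\phi_n+2\varepsilon_n\le\phi_n(1+e_n)$. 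The same argument applies to $\hat\Ec_3$, so the $\hat\Ec_2$ inclusion also holds, and intersecting again gives the result. I need $2\varepsilon_n\le\phi_n e_n$, which is exactly what the rate condition provides with probability tending to one.

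The main obstacle is the first step: establishing the uniform rate $\sqrt{|\log h_n|}/r_n$ for $\hat d_n$ under physical dependence with only fourth moments. I would first try to cite an existing uniform bound for local linear estimators under locally stationary errors, such as those used in \cite{dette2019} or \cite{bucher2021}. Failing that, I would use an $m$-dependent approximation via $\Theta_m$, followed by a Bernstein-type blocking bound on a polynomial grid.
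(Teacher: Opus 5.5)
Your proposal is correct and follows essentially the same route as the paper: a uniform bound $\|\hat d_n-d\|_\infty=\Oc_\pr(\sqrt{|\log h_n|}/r_n)=o_\pr(e_n\phi_n)$, the $1$-Lipschitz property of the minimax and moving-window functionals giving a slack of $2\|\hat d_n-d\|_\infty$, and then the inclusions for $\Ec_1$ and $\Ec_3$ (hence $\Ec_2$) proved separately before intersecting. For the step you flag as the main obstacle, the paper does not re-derive the uniform rate but simply invokes Theorem A.1 of \cite{bucher2021}, which is the citation you proposed as your first option.
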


The next result is the basic localization step. It shows that the minimax statistic is controlled by the signed estimation error on the near-extremal set.

\begin{proposition}[Localization of the minimax statistic] \label{prop:localization}
	Let Assumptions \ref{assump:kern}, \ref{assump:error}, \ref{assump:mu} (1) and \ref{assump:g_estimator} hold, $d_\infty > 0$ and $(\phi_n)_{n\in\N}$ be a positive sequence converging to $0$ with 
	$|\log(h_n)|^{1/2} = o(r_n\phi_n)$. 
	Then, with probability converging to one,
	\begin{equation*}
		\hat{d}_{n,\infty} - d_\infty \ge \inf_{t\in\Ec(\phi_n)} \sgn(d(t)) \{\hat{d}_n(t) - d(t)\},
	\end{equation*}
	 where $\sgn(x)$ denotes the sign of $x\in\R$.
\end{proposition}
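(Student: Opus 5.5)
We use the uniform rate of the signal estimator together with a direct comparison argument at an estimated optimal window. The first ingredient is
\begin{equation*}
	\sup_{t\in[h_n,1-h_n]}|\hat d_n(t)-d(t)| = \Oc_\pr\big(|\log(h_n)|^{1/2}/r_n\big),
\end{equation*}
which follows from a standard Gaussian approximation or maximal inequality for the kernel-weighted sums under Assumptions \ref{assump:kern} and \ref{assump:error}. The bias is negligible by Assumption \ref{assump:mu} (1), since the jackknife removes the $h_n^2$ term, and Assumption \ref{assump:g_estimator} controls $\hat g_n$. Write $\epsilon_n$ for this sup-error. By hypothesis $\epsilon_n=o_\pr(\phi_n)$. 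Since the minimax functional is $1$-Lipschitz in sup-norm, this also gives $|\hat d_{n,\infty}-d_\infty|\le\epsilon_n$.

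Next, fix $\hat t\in I_n$ that (nearly) attains $\hat d_{n,\infty}$; an approximate minimizer with error $o(\epsilon_n)$ suffices. Then
\begin{equation*}
	\sup_{s\in[\hat t,\hat t+\delta]}|d(s)| \le \hat d_{n,\infty}+\epsilon_n \le d_\infty+2\epsilon_n \le d_\infty+\phi_n
\end{equation*}
with probability tending to one. Hence $\hat t\in\Ec_3(\phi_n)$ and the whole window $[\hat t,\hat t+\delta]$ lies in $\Ec_2(\phi_n)$.

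The main step is to find a point $s^*$ of this window in $\Ec_1(\phi_n)$, where the lower bound is realized. Since $\hat t$ is feasible for the population problem, $\sup_{s\in[\hat t,\hat t+\delta]}|d(s)|\ge d_\infty$. By compactness and continuity there is $s^*$ in the window with $|d(s^*)|\ge d_\infty$. Together with the upper bound above, this gives $||d(s^*)|-d_\infty|\le 2\epsilon_n\le\phi_n$, so $s^*\in\Ec_1(\phi_n)\cap\Ec_2(\phi_n)=\Ec(\phi_n)$. The assumption $d_\infty>0$ and $\epsilon_n=o(\phi_n)$ ensure that for large $n$ we have $|d(s^*)|\ge d_\infty-\phi_n>0$. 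Therefore $\sgn(d(s^*))$ is well defined and $\sgn\hat d_n(s^*)=\sgn d(s^*)$, because $|\hat d_n(s^*)-d(s^*)|\le\epsilon_n<|d(s^*)|$.

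Finally,
\begin{equation*}
	\hat d_{n,\infty}\ge|\hat d_n(s^*)|=\sgn(d(s^*))\hat d_n(s^*)=|d(s^*)|+\sgn(d(s^*))\{\hat d_n(s^*)-d(s^*)\}.
\end{equation*}
Using $|d(s^*)|\ge d_\infty$ and taking the infimum over $\Ec(\phi_n)\ni s^*$ yields the claim.

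The main obstacle is the choice of $s^*$. A naive choice of $s^*$ as a maximizer of $|\hat d_n|$ only gives $|d(s^*)|\ge d_\infty-2\epsilon_n$, and the resulting $-2\epsilon_n$ error is not allowed. The fix is to choose $s^*$ as a point where the true $|d|$ is at least $d_\infty$, which exists by the optimality of $d_\infty$. The first inequality in the display, $\hat d_{n,\infty}\ge|\hat d_n(s^*)|$, then holds simply because $s^*$ lies in the estimated optimal window.

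Some care is needed with the trimming. Membership $\hat t\in[0,1-\delta]$ is automatic, and $\Ec(\phi_n)\subset[h_n,1-h_n]$ holds for large $n$ by Assumption \ref{assump:mu} (1), so the uniform rate applies on the whole window.
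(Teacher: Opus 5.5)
Your argument is correct and is essentially the paper's proof. The paper likewise takes a window $[t,t+\delta]$ with $t\in\Ec_3(\phi_n)$ and a maximizer $s_t$ of the true $|d|$ on it. It shows $s_t\in\Ec(\phi_n)$ and uses $\sgn(\hat d_n(s_t))=\sgn(d(s_t))$ to write $|\hat d_n(s_t)|=|d(s_t)|+\sgn(d(s_t))\{\hat d_n(s_t)-d(s_t)\}$.

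The only difference is organizational. The paper bounds $\sup_{s\in[t,t+\delta]}|\hat d_n(s)|-d_\infty$ from below for every $t$, handling $t\notin\Ec_3(\phi_n)$ by the crude bound $\phi_n/2$, and then takes the infimum, so it never selects an empirical minimizer. In your version, a fixed approximation error $\eta>0$ leaves a term $-\eta$ in the final inequality, so an error $\eta=o(\epsilon_n)$ does not suffice as you claim. Instead, use an exact minimizer, or let $\eta\downarrow0$ on the same high-probability event, which does not depend on $\eta$.
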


Proposition \ref{prop:localization} is the point at which the stable-interval problem differs from a standard supremum problem. The statistic is not driven by the largest estimation error over the full time interval, but by the signed error over a shrinking near-extremal region determined by the minimax geometry of $d$.

The Gaussian approximation developed in Section \ref{sec:gaussian_evt} shows that the supremum over $\Ec(\phi_n)$ behaves like the supremum of a locally scaled stationary Gaussian process. Because the scale $\sigma(t)$ varies with time, the relevant number of effective independent blocks depends jointly on the geometry of $\Ec(\phi_n)$ and on the values of $\sigma(t)$ over that set. 

We construct the ``effective sample size'' by partitioning $\Ec(\phi_n)$ into sets of length $\rho_n$. Recall $\theta_n = \min_{i=1}^L |b_{i,n} - a_{i,n}| > 0$ from \eqref{eq:def_theta}, and let $(\rho_n)_{n\in\N}$ be a positive sequence with $\rho_n = o(\theta_n)$ and $h_n = o(\rho_n)$. Partition each interval $[a_{i, n}, b_{i, n}]$ into blocks of length $\rho_n + 2h_n$ with starting points
\begin{equation*}
	\xi_{\nu, i, n} = a_{i, n} + (\nu - 1)(\rho_n + 2h_n), \qquad \nu = 1,\dots, K_{i,n}:= \Big\lfloor \frac{|b_{i, n} - a_{i, n}|}{\rho_n + 2h_n}\Big\rfloor.
\end{equation*} 
Then,
\begin{equation} \label{eq:set_decomposition}
	\Ec(\phi_n) = \bigg(\bigcup_{i=1}^{L} \bigcup_{\nu=1}^{K_{i,n}} [\xi_{\nu, i, n}, \xi_{\nu+1, i, n}]\bigg) \cup R_n,
\end{equation}
where the intervals have length $\rho_n + 2h_n$ and the remainder $R_n$ is of order $\Oc(\rho_n) = o(\theta_n)$. Define  $\Lambda_2 = \| (K^*)' \|_2 / \|K^*\|_2$, $\ell_n = \sqrt{2 \log(\frac{\Lambda_2 \rho_n}{2\pi h_n})}$ and $\sigma_{\max} = \sup_{t\in\Ec(\phi_n)} \sigma(t)$. The effective sample size is 
\begin{equation} \label{eq:effective_sample_size}
	W_n = \sum_{i=1}^{L} \sum_{\nu=1}^{K_{i, n}} \exp\bigg(-\frac{\ell_n^2}{2}\bigg[\frac{\sigma_{\max}^2}{\sigma^2(\xi_{\nu, i, n})} - 1\bigg]\bigg).
\end{equation}
Blocks on which $\sigma(\xi_{\nu,i,n})$ is close to $\sigma_{\max}$ contribute nearly one to $W_n$, whereas blocks with substantially smaller variance are exponentially downweighted. The sample version $\widehat{W}_n$ is obtained by replacing $\sigma$ and $\Ec(\phi_n)$ by  $\hat{\sigma}$ and $\hat{\Ec}(\phi_n)$.
Define $a_n = \ell_n + \log (W_n^{(c)}) / \ell_n$, where $W_n^{(c)}$ is the solution of $\log(W_n^{(c)}) + \log^2(W_n^{(c)}) / (2 \ell_n^2) = \log(W_n)$, and define $\hat{a}_n$ analogously.

The polynomial exponents in Assumption \ref{assump:mu} are used only to control the size of deterministic shells around the extremal set. Let $\alpha_{\max} = \max_{(x, \nu) \in (\partial\Ec)^{ext}} \alpha_{x,\nu}$. By Proposition \ref{prop:extremal_set2}, $\lambda\{\Ec(\phi_n(1+e_n))\setminus\Ec(\phi_n(1-e_n))\} \lesssim \phi_n^{1/\alpha_{\max}}$ under Assumption \ref{assump:mu} (2), whereas the sharper bound $\lambda\{\Ec(\phi_n(1+e_n))\setminus\Ec(\phi_n(1-e_n))\} \lesssim e_n\phi_n^{1/\alpha_{\max}}$ holds under Assumption \ref{assump:mu} (3), with analogous bounds for the Hausdorff distance. Later arguments require the relevant shell width to be	$o(\ell_n^{-2})$. The polynomial bounds in Assumptions \ref{assump:mu} (2) and (3) may therefore be replaced by any regular boundary modulus for which the corresponding shell width satisfies the same $o(\ell_n^{-2})$ condition.
	
The following conditions collect the rate restrictions used in the extreme value approximation.

\begin{assumption} \label{assump:sequences}
	The positive sequences $(h_n)_{n\in\N}$ and $(\rho_n)_{n\in\N}$ tend to zero, as $n\to\infty$, and additionally satisfy
	\begin{equation*}
		nh_n \to\infty, \qquad \rho_n |\log(h_n)| \to 0, \qquad \ell_n r_n h_n^\kappa \to 0,  \qquad  \frac{\ell_n r_n \log^2 n}{n^{3/4}} = \Oc(1),
	\end{equation*}
	and $h_n \le \rho_n^{1+\eta}$, for some $\eta > 0$, where $r_n=\sqrt{nh_n}$ and $\kappa = 3$ in the level case, and $r_n=\sqrt{nh_n^3}$ and $\kappa = 2$ in the derivative case.	
\end{assumption}

The conditions on $h_n$ and $\rho_n$ may appear cumbersome, but simplify the technical proofs. Their explicit use for Theorem \ref{thm:localized_gumbel} is discussed after its proof in Section \ref{sec:auxiliary_results}.

Generally, $W_n$ is not necessarily dominated by those indices $(i, \nu)$, where $\sigma^2(\xi_{\nu, i, n})$ is close to $\sigma_{\max}^2$. The next assumption rules out irregular cases in which the effective sample size is dominated by a delicate competition between long intervals with slightly smaller variance and very short intervals with maximal variance.
It is formulated for an arbitrary finite union of closed intervals and isolated points $\Ac \subset [0, 1]$, and throughout this section it is to be understood with $\Ac = \Ec$.

\begin{assumption} \label{assump:regularity_sigma}
	\begin{enumerate}
		\item The supremum $\sigma_{\max}(\Ac)$ is attained on a non-degenerate interval $I \subset [a_i, b_i], i\in\{1, \dots, L\}$, and the set $\{t \in [0, 1]: \sigma(t) = \sigma_{\max}\}$ is a finite union of intervals and isolated points.
		\item The sequences $h_n, \rho_n$ and $\theta_n$ satisfy $\log|\log h_n| \cdot |\log \rho_n| / | \log h_n| \to 0$ and $\ell_n^{-2} \le \theta_n$.
	\end{enumerate}
\end{assumption}

The role of Assumption \ref{assump:regularity_sigma} is twofold. For deterministic localization sets, it yields the sharp equality in the otherwise conservative bound of Theorem \ref{thm:localized_gumbel}. However, the stable-interval test is feasible only after replacing the unknown near-extremal set $\Ec(\phi_n)$, the maximal long-run variance, and the effective sample size by their plug-in analogues. This replacement is controlled by Proposition \ref{prop:W_plugin} and requires this additional regularity. Consequently, Assumption \ref{assump:regularity_sigma} is imposed in Theorem \ref{thm:main_conv}, even though the underlying extreme value bound has a conservative version under weaker conditions.

The assumption is analogous in spirit to Assumption \ref{assump:mu}. In both cases we distinguish a favorable non-degenerate regime, where the relevant extremal object is attained on a set of positive Lebesgue measure, from a more restrictive regime that also permits isolated maximizers and therefore requires stronger localization control. Here Assumption \ref{assump:mu} governs the geometry of the near-extremal set of the signal $d$, whereas Assumption \ref{assump:regularity_sigma} plays the same role for the variance profile $\sigma$ and the effective sample size entering the extreme value calibration.
 
\subsection{The Stable-interval Test} \label{sec:test}

Let $\hat\sigma_{\max} = \sup_{t\in\hat{\Ec}(\phi_n)}\hat{\sigma}_n(t)$. For $c\in\R$, define
\begin{equation*}
	T_n(c) = \frac{r_n\hat{a}_n}{\|K^*\|_2 \hat{\sigma}_{\max}} \big(c - \hat{d}_{n,\infty}\big) - \hat{a}_n^2,
\end{equation*}
The statistic is large when the estimated minimax deviation $\hat d_{n,\infty}$ is sufficiently below the boundary value $c$. 
At the null boundary $c=d_\infty$, the following theorem gives a conservative Gumbel approximation.

\begin{theorem}[Asymptotic validity of the stable-interval test] \label{thm:main_conv}
	Let the assumptions of Proposition \ref{prop:localization} and Assumption \ref{assump:sequences} hold and $\rho_n / \theta_n \to 0$. Assume in addition either Assumptions \ref{assump:mu} (2) and \ref{assump:regularity_sigma} (1) with $\phi_n^{1/\alpha_{\max}} = o(\ell_n^{-2})$, or Assumptions \ref{assump:mu} (3) and \ref{assump:regularity_sigma} (2) with $e_n\phi_n^{1/\alpha_{\max}} = o(\ell_n^{-2})$, for some sequence $e_n \searrow 0$.
	Then, for any $x\in \R$,
	\begin{equation} \label{eq:main_result}
		\liminf_{n\to\infty} \pr(T_n(d_\infty) \le x) \ge \exp(-\exp(-x)).
	\end{equation}
\end{theorem}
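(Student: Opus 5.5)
The approach is to reduce the event $\{T_n(d_\infty)\le x\}$ to an event about a one-sided extreme of the estimation error over the near-extremal set. Then a localized Gumbel approximation is applied, and afterwards the plug-in quantities are replaced by their population versions.

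First, $T_n(d_\infty)\le x$ is equivalent to
\begin{equation*}
	\frac{r_n}{\|K^*\|_2\hat\sigma_{\max}}\big(d_\infty-\hat d_{n,\infty}\big)\le \hat a_n+\frac{x}{\hat a_n}.
\end{equation*}
By Proposition \ref{prop:localization}, with probability tending to one,
\begin{equation*}
	d_\infty-\hat d_{n,\infty}\le \sup_{t\in\Ec(\phi_n)}\sgn(d(t))\{d(t)-\hat d_n(t)\}=:Z_n.
\end{equation*}
Hence
\begin{equation*}
	\pr(T_n(d_\infty)\le x)\ge \pr\Big(\tfrac{r_n}{\|K^*\|_2\hat\sigma_{\max}}Z_n\le \hat a_n+x/\hat a_n\Big)-o(1).
\end{equation*}
This one-sided inequality is exactly why the statement is only a $\liminf$ bound.

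Second, I would decompose $d-\hat d_n$ into a stochastic part and a remainder. The stochastic part is the kernel-weighted error sum with equivalent kernel $K^*$. The remainder consists of the bias, of order $h_n^\kappa$, and, in the level case, $\hat g_n-g$. Assumption \ref{assump:sequences} gives $\ell_n r_n h_n^\kappa\to0$, and Assumption \ref{assump:g_estimator} together with $\ell_n\asymp\sqrt{|\log h_n|}$ gives that the remainder multiplied by $r_n a_n$ is $o_\pr(1)$. Since the sign $\sgn(d(t))$ is constant on each component of $\Ec(\phi_n)$ (because $d_\infty>0$), I can then apply the localized Gaussian and extreme value approximation of Section \ref{sec:gaussian_evt}, namely Theorem \ref{thm:localized_gumbel}, to the signed normalized process on the deterministic set $\Ec(\phi_n)$ with block partition \eqref{eq:set_decomposition}. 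That theorem yields
\begin{equation*}
	\liminf_n \pr\Big(\tfrac{r_n a_n}{\|K^*\|_2\sigma_{\max}}Z_n-a_n^2\le x\Big)\ge \exp(-e^{-x}).
\end{equation*}
Here the weighting by $\sigma(\xi_{\nu,i,n})$ in $W_n$ accounts for the time-varying long-run variance, the block conditions follow from $\rho_n=o(\theta_n)$, and the variance approximation uses $\ell_n r_n\log^2 n/n^{3/4}=\Oc(1)$.

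Third, and this is the step I expect to be the main obstacle, I must replace $(\hat\sigma_{\max},\hat a_n)$ by $(\sigma_{\max},a_n)$. At the Gumbel scale this requires
\begin{equation*}
	\hat a_n^2-a_n^2=o_\pr(1)\quad\text{and}\quad \ell_n^2\,|\hat\sigma_{\max}/\sigma_{\max}-1|=o_\pr(1).
\end{equation*}
The rate \eqref{eq:rate_sigma} gives $\Oc_\pr(n^{-2/7})$, which is negligible relative to $\ell_n^{-2}\asymp|\log h_n|^{-1}$. For the set, Proposition \ref{prop:E_sandwich} sandwiches $\hat\Ec(\phi_n)$ between $\Ec(\phi_n(1\mp e_n))$. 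The shell bounds from Proposition \ref{prop:extremal_set2} (of order $\phi_n^{1/\alpha_{\max}}$, respectively $e_n\phi_n^{1/\alpha_{\max}}$) are $o(\ell_n^{-2})$ under the stated conditions, so $\hat\sigma_{\max}$ differs from $\sigma_{\max}$ by $o(\ell_n^{-2})$, using the Lipschitz continuity of $\sigma$. For $\widehat W_n$ I would invoke Proposition \ref{prop:W_plugin} to get $\log\widehat W_n-\log W_n=o_\pr(1)$. This is where Assumption \ref{assump:regularity_sigma} enters: in the first regime, (1) ensures that the maximal-variance blocks carry nonvanishing mass, and in the second regime, (2) controls isolated maximizers. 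Since $a_n=\ell_n+\log W_n^{(c)}/\ell_n$, these bounds give $\hat a_n^2-a_n^2=o_\pr(1)$ and $\hat a_n/a_n\to1$.

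Finally, Slutsky-type arguments together with the continuity of the Gumbel distribution function transfer the bound from the second step to $T_n(d_\infty)$, which gives \eqref{eq:main_result}.
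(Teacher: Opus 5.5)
Your proposal is correct and follows essentially the paper's route: the one-sided localization bound of Proposition \ref{prop:localization}, then replacing $(\hat\sigma_{\max},\hat a_n)$ by $(\sigma_{\max},a_n)$ via Propositions \ref{prop:sigma_max_plugin} and \ref{prop:W_plugin}, then the signed localized Gumbel bound \eqref{eq:signed_localized_gumbel} on the deterministic set $\Ec(\phi_n)$. The paper only differs in doing the plug-in replacement before the Gumbel step and in checking explicitly, via Proposition \ref{prop:extremal_set}, that $\Ec(\phi_n)$ satisfies Assumption \ref{assump:sets}; your separate control of the bias and $\hat g_n$ remainders is already built into Theorem \ref{thm:gaussian_approx}.
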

The inequality in Theorem \ref{thm:main_conv} reflects the conservative localization bound in Proposition \ref{prop:localization}. Let $q_{1-\alpha} = - \log(- \log(1-\alpha))$ be the $(1-\alpha)$-quantile of the standard Gumbel distribution. We reject $H_0: d_\infty \ge \Delta$ whenever
\begin{equation} \label{eq:decision_rule}
	T_n(\Delta) > q_{1-\alpha}.
\end{equation}

\begin{corollary} \label{cor:test}
	Let $\Delta > 0$. Under the assumptions of Theorem \ref{thm:main_conv}, the test defined by the decision rule in \eqref{eq:decision_rule} has asymptotic level $\alpha$. Moreover, it is consistent against all fixed alternatives satisfying $0 \le d_\infty < \Delta$.
\end{corollary}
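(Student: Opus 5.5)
Both claims follow from Theorem \ref{thm:main_conv} together with the monotonicity of $c\mapsto T_n(c)$. The normalising factor $r_n\hat a_n/(\|K^*\|_2\hat\sigma_{\max})$ is positive with probability tending to one, because $\hat a_n\sim\ell_n\to\infty$ and $\hat\sigma_n^2$ is uniformly consistent with limit bounded away from zero by \eqref{eq:rate_sigma}. Hence $T_n(c)$ is nondecreasing in $c$ on this event.

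\emph{Level.} Fix a null configuration with $d_\infty\ge\Delta$. Since $\Delta\le d_\infty$, monotonicity gives $T_n(\Delta)\le T_n(d_\infty)$ on the above event. Therefore
\begin{equation*}
	\limsup_{n\to\infty}\pr\big(T_n(\Delta)>q_{1-\alpha}\big)\le \limsup_{n\to\infty}\pr\big(T_n(d_\infty)>q_{1-\alpha}\big)\le 1-\exp(-\exp(-q_{1-\alpha}))=\alpha,
\end{equation*}
where the second inequality is \eqref{eq:main_result} applied at $x=q_{1-\alpha}$. Strict nulls $d_\infty>\Delta$ are covered by the same display. For them the rejection probability actually tends to zero, although level control only needs the bound. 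I note that the assumptions of Theorem \ref{thm:main_conv} are imposed on the true configuration, including $d_\infty>0$ from Proposition \ref{prop:localization}, and this holds automatically under $H_0$ because $\Delta>0$.

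\emph{Consistency.} Let $0\le d_\infty<\Delta$. First I show $\hat d_{n,\infty}\convp d_\infty$. The map $f\mapsto\inf_t\sup_{s\in[t,t+\delta]}|f(s)|$ is $1$-Lipschitz in the sup-norm. The trimming to $I_n$ does not change the population value for large $n$ under Assumption \ref{assump:mu} (1). It therefore suffices to have $\sup_t|\hat d_n(t)-d(t)|=\Oc_\pr(\sqrt{|\log h_n|}/r_n)+o(1)=o_\pr(1)$. This uniform rate is the one already used for Propositions \ref{prop:E_sandwich} and \ref{prop:localization}, combined with Assumption \ref{assump:g_estimator} and the bias bound $h_n^\kappa\to0$. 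Consequently $\Delta-\hat d_{n,\infty}\ge(\Delta-d_\infty)/2>0$ with probability tending to one.

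Next I control the normaliser: $\hat\sigma_{\max}$ stays bounded, and $\hat a_n^2\lesssim\ell_n^2\asymp\log(\rho_n/h_n)\lesssim|\log h_n|$. I expect this to be the main obstacle, because $\hat a_n$ is built from $\widehat W_n$ and $\hat\Ec(\phi_n)$. The bound follows from $\widehat W_n\le\sum_{i,\nu}1\lesssim\rho_n^{-1}$, so $\log\widehat W_n^{(c)}\le\log\widehat W_n=\Oc(|\log\rho_n|)$ and $\hat a_n=\Oc_\pr(\ell_n+|\log\rho_n|/\ell_n)$. A lower bound $\hat a_n\gtrsim\ell_n>0$ also holds, since $\widehat W_n\ge1$ is witnessed by the block attaining $\hat\sigma_{\max}$. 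This block exists by Proposition \ref{prop:E_sandwich}: the estimated set contains a component of length of order $\phi_n\gg\rho_n$.

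Combining these bounds,
\begin{equation*}
	T_n(\Delta)\ge c\,r_n\ell_n-C\ell_n^2=\ell_n\big(c\,r_n-C\ell_n\big)\to\infty
\end{equation*}
in probability, because $r_n/\sqrt{|\log h_n|}\to\infty$ by the rate condition in Proposition \ref{prop:localization}. Hence $\pr(T_n(\Delta)>q_{1-\alpha})\to1$. In the boundary case $d_\infty=0$, Proposition \ref{prop:localization} is not needed: only the uniform consistency of $\hat d_n$ and the bounds on $\hat a_n$ and $\hat\sigma_{\max}$ enter, and $\hat\sigma_{\max}$ is bounded by uniform consistency of $\hat\sigma_n^2$ regardless of which set it is maximised over.
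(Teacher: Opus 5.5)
Your proof is correct. The level part coincides with the paper's. Monotonicity of $c\mapsto T_n(c)$ is the decomposition $T_n(\Delta)=T_n(d_\infty)+\frac{r_n\hat a_n}{\|K^*\|_2\hat\sigma_{\max}}(\Delta-d_\infty)$, followed by Theorem \ref{thm:main_conv} at $x=q_{1-\alpha}$.

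For consistency you argue differently. The paper splits the alternatives. When $0<d_\infty<\Delta$ it uses that decomposition together with Theorem \ref{thm:main_conv}. Only when $d_\infty=0$ does it bound $\hat d_{n,\infty}\le\|\hat d_n-d\|_\infty$ directly. You instead use the $1$-Lipschitz property of the minimax functional to get $\hat d_{n,\infty}\convp d_\infty$ for every $0\le d_\infty<\Delta$, and bound $T_n(\Delta)$ from below in one stroke.

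This is more than a cosmetic unification. Theorem \ref{thm:main_conv} controls only the upper tail of $T_n(d_\infty)$, so by itself it cannot give $T_n(\Delta)\to\infty$ when $d_\infty>0$. One also needs a lower bound such as $T_n(d_\infty)\ge-\hat a_n\{\Oc_\pr(\sqrt{|\log h_n|})+\hat a_n\}$. This is dominated by the drift of order $r_n\hat a_n$, because $\sqrt{|\log h_n|}=o(r_n)$ and $\hat a_n=\Oc(\sqrt{|\log h_n|})$. That is exactly what your direct estimate supplies. Your route needs neither $d_\infty>0$ nor any extreme value approximation for power. Its cost is explicit two-sided control of $\hat a_n$ and an upper bound on $\hat\sigma_{\max}$, which the paper uses only tacitly.

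One step needs a better justification. You claim $\widehat W_n\ge1$ is ``witnessed by the block attaining $\hat\sigma_{\max}$'', but this does not follow as stated. $\hat\sigma_{\max}$ is a supremum over all of $\hat\Ec(\phi_n)$, not over block starting points. Also, $\rho_n=o(\phi_n)$ is not among the hypotheses; only $\rho_n=o(\theta_n)$ is.

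The lower bound is still available from the paper's results. Proposition \ref{prop:W_plugin} gives $\hat a_n/a_n\to1$ in probability. Proposition \ref{prop:sequences} (2) or (3), combined with $\rho_n\ell_n^2\to0$, gives $W_n(\Ec(\phi_n))\to\infty$. Hence $a_n=\{\ell_n^2+2\log W_n\}^{1/2}\ge\ell_n$ eventually, and $\hat a_n\ge\ell_n/2$ with probability tending to one.

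Likewise, positivity of the normaliser does not need $\hat a_n\sim\ell_n$. By the choice of root in \eqref{eq:representation_Wnc}, $\hat a_n=\{\ell_n^2+2\log\widehat W_n\}^{1/2}\ge0$ automatically.
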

In some cases the inequality in \eqref{eq:main_result} can be sharpened to equality. A sufficient condition is
\begin{equation*}
	\frac{r_n\hat{a}_n}{\|K^*\|_2 \hat{\sigma}_{\max}} \big| \hat{d}_{n,\infty} - d_\infty - \inf_{t\in \Ec(\phi_n)} \sgn(d(t)) \{\hat{d}_n(t) - d(t)\} \big| = o_\pr(1),
\end{equation*}
This condition states that the minimax statistic is asymptotically determined by the same signed extremal process that appears in the localization bound. For example, if $\delta = 0$ and $\inf_{t\in[0, 1]}|d(t)| = d_\infty> 0$, the functional $\Phi(f) = \inf_{t\in[0, 1]} |f(t)|$ is Hadamard-differentiable in $d$ in direction $h\in C([0, 1])$ with derivative 
\begin{equation*}
	\Phi_d'(h) = \inf_{t\in \Ec} \sgn(d(t)) h(t).
\end{equation*}
This gives the usual first-order delta-method expansion. The condition above is stronger, since it is required at the Gumbel scale and with $\Ec$ replaced by $\Ec(\phi_n)$. It therefore has to be verified by a separate localization argument.

\subsection{Estimating the Onset of a Stable Period}
\label{subsec:onset}

The test in Section \ref{sec:test} answers whether a stable interval of length $\delta$ exists at tolerance $\Delta$. In applications one is often also interested in locating such an interval, and in particular in estimating its onset. The moving-window representation of the stable-interval functional provides a natural estimator for this purpose.

Define the population and empirical moving-window deviations $M(t) = \sup_{s\in[t,t+\delta]} |d(s)|$ and $\widehat{M}_n(t) = \sup_{s\in[t,t+\delta]} |\hat d_n(s)|$, for $t \in[0, 1-\delta]$. Then $d_\infty=\inf_{t\in[0,1-\delta]} M(t)$ and $\hat d_{n,\infty}=\inf_{t\in[0,1-\delta]} \widehat{M}_n(t)$. Thus, minimizers of $M$ are the starting points of intervals that are most stable in the minimax sense.
Based on the set of optimal starting points $\Ec_3$, define the onset of the first optimal stable period by $\tau_* = \inf \Ec_3$, where $\inf \empty = \infty$ by convention. The estimated near-optimal start set is $\hat{\Ec}_3(\phi_n)$ and the corresponding onset estimator is $\hat\tau_* = \inf \hat{\Ec}_3(\phi_n)$. The interval $[\hat\tau_*, \hat\tau_*+\delta]$ is therefore the first estimated near-optimal interval. If the test rejects $H_0:d_\infty\ge\Delta$, then this interval is an estimated stable interval. 

The estimator $\hat\tau_*$ targets the first minimax-optimal stable interval. If the scientific target is instead the first interval that satisfies the pre-specified tolerance $\Delta$, define
\begin{equation*}
	\Tc_\Delta = \{t\in[0,1-\delta]:M(t)\le\Delta\}, \qquad \tau_\Delta = \inf\Tc_\Delta,
\end{equation*}
whenever $\Tc_\Delta\neq\emptyset$. A plug-in estimator is
\begin{equation*}
	\hat{\Tc}_\Delta = \{t\in[0,1-\delta]: \widehat{M}_n(t)\le\Delta\}, \qquad \hat\tau_\Delta =
	\inf\hat{\Tc}_\Delta.
\end{equation*}
The following proposition summarizes the basic consistency properties of the onset estimators. It is a direct consequence of the uniform consistency of $\hat d_n$ and the continuity of the moving-window map $f\mapsto \sup_{s\in[t,t+\delta]}|f(s)|$.

\begin{proposition}[Consistency of onset estimators]
	\label{prop:onset_consistency}
	Let $\phi_n\searrow0$ satisfy $\|\hat d_n-d\|_\infty=o_\pr(\phi_n)$. Then, for every sequence $e_n\searrow0$ such that $\|\hat d_n-d\|_\infty=o_\pr(e_n\phi_n)$,
	\begin{equation*}
		\Ec_3\big(\phi_n(1-e_n)\big) \subset \hat{\Ec}_3(\phi_n) \subset \Ec_3\big(\phi_n(1+e_n)\big)
	\end{equation*}
	with probability tending to one. In particular, $\hat\tau_*\convp\tau_*$.
	
	Suppose, moreover, that $\Tc_\Delta\neq\emptyset$, and that stable intervals occur immediately after $\tau_\Delta$, in the sense that, for every $\eps>0$, it exists $t_\eps \in [\tau_\Delta,\tau_\Delta+\eps]\cap[0,1-\delta]$ such that $M(t_\eps) < \Delta$.
	Then, $\hat\tau_\Delta\convp\tau_\Delta$.
\end{proposition}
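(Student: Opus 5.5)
The argument is deterministic once we work on the event $\Omega_n=\{\|\hat d_n-d\|_\infty\le \eps_n\}$, where $\eps_n:=\|\hat d_n-d\|_\infty$. The hypothesis $\eps_n=o_\pr(e_n\phi_n)$ (and hence $o_\pr(\phi_n)$) gives $\pr(\eps_n\le e_n\phi_n/3)\to1$, so it suffices to prove the set inclusions on the event $\{\eps_n\le e_n\phi_n/3\}$.

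The first step is the basic stability estimate. For every $t\in[0,1-\delta]$,
\[
\big|\widehat M_n(t)-M(t)\big| \le \sup_{s\in[t,t+\delta]}\big||\hat d_n(s)|-|d(s)|\big| \le \eps_n,
\]
because both the map $f\mapsto|f|$ and the supremum over a fixed window are $1$-Lipschitz in sup-norm. Taking the infimum over $t$ gives $|\hat d_{n,\infty}-d_\infty|\le\eps_n$. Consequently $\widehat M_n(t)-\hat d_{n,\infty}$ and $M(t)-d_\infty$ differ by at most $2\eps_n$, uniformly in $t$.

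The sandwich then follows. If $t\in\Ec_3(\phi_n(1-e_n))$, then $\widehat M_n(t)-\hat d_{n,\infty}\le \phi_n-e_n\phi_n+2\eps_n\le\phi_n$, so $t\in\hat\Ec_3(\phi_n)$. Conversely, if $t\in\hat\Ec_3(\phi_n)$, then $M(t)-d_\infty\le\phi_n+2\eps_n\le\phi_n(1+e_n)$.

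Consistency of $\hat\tau_*$ comes from the sandwich together with the limit $\inf\Ec_3(\phi)\to\inf\Ec_3$ as $\phi\downarrow0$. Since $M$ is continuous, $\Ec_3(\phi)$ is a decreasing family of compact sets whose intersection is $\Ec_3$, which is nonempty by continuity and compactness. Hence $\inf\Ec_3(\phi)\uparrow\tau_*$. The inclusions give
\[
\inf\Ec_3\big(\phi_n(1+e_n)\big)\le\hat\tau_*\le\inf\Ec_3\big(\phi_n(1-e_n)\big)\le\tau_*,
\]
where the last inequality holds because $\Ec_3\subset\Ec_3(\phi)$. Both outer bounds converge to $\tau_*$, so $\hat\tau_*\convp\tau_*$. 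The one step needing care is the monotone-limit claim. It can be shown by contradiction: if $t_k\in\Ec_3(\phi_k)$ with $t_k\le\tau_*-\eta$ and $\phi_k\downarrow0$, then a limit point $t$ of $(t_k)$ satisfies $M(t)=d_\infty$ by continuity, which contradicts the minimality of $\tau_*$.

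For $\hat\tau_\Delta$, fix $\eps>0$ and prove the two one-sided bounds separately.

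\emph{Upper bound.} By assumption there is $t_\eps\in[\tau_\Delta,\tau_\Delta+\eps]$ with $M(t_\eps)<\Delta$. Set $\kappa=\Delta-M(t_\eps)>0$. On the event $\{\eps_n<\kappa\}$, whose probability tends to one because $\eps_n=o_\pr(1)$, we get $\widehat M_n(t_\eps)<\Delta$. Thus $t_\eps\in\hat\Tc_\Delta$ and $\hat\tau_\Delta\le\tau_\Delta+\eps$.

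\emph{Lower bound.} On the compact set $[0,\tau_\Delta-\eps]$ we have $M>\Delta$, since every such $t$ lies outside $\Tc_\Delta$. By continuity, $\min_{[0,\tau_\Delta-\eps]}M=\Delta+\kappa'$ for some $\kappa'>0$, and this set is empty if $\tau_\Delta<\eps$. On $\{\eps_n<\kappa'\}$ we then have $\widehat M_n>\Delta$ on $[0,\tau_\Delta-\eps]$, so $\hat\tau_\Delta\ge\tau_\Delta-\eps$.

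The main obstacle is only the upper bound. Without the assumption that strictly stable intervals occur immediately after $\tau_\Delta$, the estimator can jump past $\tau_\Delta$ when $M$ merely touches $\Delta$ there. That assumption is exactly what supplies the margin $\kappa$.
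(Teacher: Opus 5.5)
Your proof is correct and follows essentially the paper's argument. The shared steps are the uniform bound $\sup_t|\widehat M_n(t)-M(t)|\le\|\hat d_n-d\|_\infty$, which perturbs $\widehat M_n-\hat d_{n,\infty}$ by at most $2\|\hat d_n-d\|_\infty$ and gives the sandwich, and, for $\hat\tau_\Delta$, a continuity/compactness lower bound together with the robust-onset point $t_\eps$ for the upper bound. The only cosmetic difference is in $\hat\tau_*\convp\tau_*$: you derive it from the sandwich and the limit $\inf\Ec_3(\phi)\uparrow\tau_*$, whereas the paper argues directly that $M-d_\infty$ is bounded away from zero on $[0,\tau_*-\eps]$ and that $\tau_*\in\hat\Ec_3(\phi_n)$ with probability tending to one, which is the same compactness idea.
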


The distinction between $\tau_*$ and $\tau_\Delta$ is important. The estimator $\hat\tau_*$ locates the first interval that is closest to being stable in the minimax sense, and is therefore tied directly to the extremal localization used in the test. The estimator $\hat\tau_\Delta$ instead locates the first interval satisfying the scientific tolerance $\Delta$. When several intervals are $\Delta$-stable, $\hat\tau_\Delta$ targets the earliest such interval, whereas $\hat\tau_*$ targets the earliest interval among those with minimal window-wise deviation. Both quantities are useful: $\hat\tau_\Delta$ answers when stability first occurs at the chosen tolerance, while $\hat\tau_*$ identifies the first maximally stable regime.

The only additional identifiability condition for the threshold-based onset $\tau_\Delta$ is that the first contact with the tolerance level is not purely tangential. If $M(\tau_\Delta)=\Delta$ but $M(t)>\Delta$ immediately to the right of $\tau_\Delta$, then the set of $\Delta$-stable starts may be hit only at an isolated point, and the plug-in threshold estimator need not be stable under small perturbations.

If the reported interval should be stable with high probability rather than only selected by the plug-in criterion, one may replace the threshold $\Delta$ in	$\hat{\Tc}_\Delta$ by $\Delta-\eta_n$, where $\eta_n\searrow0$ dominates the uniform estimation error. 

In practice, we report the estimated onset together with the corresponding interval and its empirical deviation, $\widehat{M}_n(\hat\tau) = \sup_{s\in[\hat\tau,\hat\tau+\delta]}|\hat d_n(s)|$, where $\hat\tau$ is either $\hat\tau_*$ or $\hat\tau_\Delta$ depending on the inferential target.

\section{Localized Approximation Theory}
\label{sec:gaussian_evt}

The stable-interval statistic is asymptotically governed by the estimation error of $\hat d_n$ on a localized near-extremal set. In this section we isolate the probabilistic approximation behind this phenomenon. The results are stated for deterministic sets $\mathcal A_n$, because the same theory is useful beyond stable-interval detection and also applies to the relevant-change statistics discussed in Section \ref{sec:relevant_change}. In the stable-interval test, $\mathcal A_n$ will be replaced by the near-extremal set $\Ec(\phi_n)$.

\subsection{Localized Gaussian and Extreme-value Approximations}
\label{subsec:localized_sets}

We first specify the class of sets over which the stochastic process is maximized.

\begin{assumption} \label{assump:sets}
	Let $\Ac\subset(0, 1)$ be a finite union of closed intervals and isolated points, i.\,e., $\Ac = \bigcup_{i=1}^{L} [a_i, b_i]$, for $a_i \le b_i$. Further, let $(\Ac_n)_{n\in \N}$ denote a sequence of sets $\Ac_n = \bigcup_{i=1}^{L} [a_{i, n}, b_{i, n}]$, such that $[a_i, b_i] \subset [a_{i, n}, b_{i, n}]$, for all sufficiently large $n\in\N$. Define $\theta_n := \min_{i=1}^L |b_{i,n} - a_{i,n}|$, for $n\in\N$, and assume $\theta_n > 0$.
\end{assumption}
For a finite union of closed intervals and isolated points $\Bc\subset[0, 1]$, define $\sigma_{\max}(\Bc) = \sup_{t\in\Bc}\sigma(t)$. Recall the construction from Section \ref{sec:est_ext_set}, and denote by $W_n(\Bc)$ the effective sample size defined analogously to \eqref{eq:effective_sample_size}. The centering constant is $a_n(\Bc) = \ell_n + \log (W_n^{(c)}(\Bc)) / \ell_n$, where $W_n^{(c)}(\Bc)$ is the solution of $\log(W_n^{(c)}(\Bc)) + \log^2(W_n^{(c)}(\Bc)) / (2 \ell_n^2) = \log(W_n(\Bc))$. The sample analogues $\hat{\sigma}_{\max}(\Bc), \widehat{W}_n(\Bc)$ and $\hat{a}_n(\Bc)$ are obtained by replacing $\sigma$ with $\hat{\sigma}_n$. With this notation, $W_n = W_n(\Ec(\phi_n))$ and $\widehat{W}_n = \widehat{W}_n(\hat{\Ec}(\phi_n))$.

The first step is a Gaussian approximation for the localized supremum of the estimation
error. The statement is formulated for the process $\hat d_n-d$, using the equivalent kernel
$K^*$ introduced in Section \ref{subsec:signal_estimation}. In the level case $r_n=\sqrt{nh_n}$, while in the derivative case $r_n=\sqrt{nh_n^3}$. The normalization $r_n$ is the pointwise rate of the estimator $\hat d_n$.

\begin{theorem}[Gaussian approximation on localized sets]
	\label{thm:gaussian_approx}
	Suppose that Assumptions \ref{assump:kern}, \ref{assump:error}, \ref{assump:mu} (1),  \ref{assump:g_estimator}, \ref{assump:sequences}, and \ref{assump:sets} hold with $\rho_n / \theta_n \to 0$. Then
	\begin{equation} \label{eq:gaussian_discrete}
		 \frac{r_n a_n(\Ac_n)}{\hat\sigma_{\max}(\Ac_n)} \sup_{t\in\Ac_n}\big(\hat d_n(t)-d(t)\big) 
		 \stackrel{\Dc}{=} \frac{a_n(\Ac_n)}{\sqrt{nh_n} \sigma_{\max}(\Ac_n)} \sup_{t\in \Ac_n} \sum_{i=1}^n \sigma\big(\tfrac{i}{n}\big) V_i K_{h_n}^*\big(\tfrac{i}{n} - t\big) + o_\pr(1),
	\end{equation}
	where $V_1,\ldots,V_n$ are independent standard normal random variables. 
	Equivalently,
	\begin{equation}  \label{eq:gaussian_integral}
		\frac{r_n a_n(\Ac_n)}{\hat\sigma_{\max}(\Ac_n)} \sup_{t\in\Ac_n}\big(\hat d_n(t)-d(t)\big) 
		\stackrel{\Dc}{=} \frac{a_n(\Ac_n)}{\sigma_{\max}(\Ac_n)} \sup_{t\in \Ac_n} \int_\R \sigma(t) K^*(x-\tfrac{t}{h_n})\diff B_x +  o_\pr(1),
	\end{equation}
	where $B$ is a standard Brownian motion.
\end{theorem}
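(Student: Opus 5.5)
The proof reduces the supremum of $\hat d_n-d$ over $\Ac_n$ to the supremum of a weighted sum of errors, replaces that by its Gaussian analogue, and finally passes to the Brownian-integral form. The working precision throughout is $o_\pr(1)$ after multiplication by $r_n a_n(\Ac_n)$. Since $a_n(\Ac_n)\asymp\ell_n\asymp\sqrt{|\log h_n|}$, every remainder must be $o_\pr\big((r_n\ell_n)^{-1}\big)$.

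\emph{Step 1 (stochastic expansion).} By the standard local linear / jackknife expansion, uniformly in $t\in[h_n,1-h_n]$,
\begin{equation*}
\hat d_n(t)-d(t)=\frac{1}{r_n\sqrt{nh_n}}\sum_{i=1}^n \eps_{i,n}K^*_{h_n}\big(\tfrac in-t\big)+\Oc(h_n^\kappa)+\Oc_\pr\Big(\frac{\sqrt{\log n}}{n h_n r_n}\Big)+R_n^g .
\end{equation*}
The $\Oc(h_n^\kappa)$ bias uses Assumption \ref{assump:mu} (1): the jackknife removes the $h_n^2$ term, leaving $h_n^3$ in the level case, and the derivative case gives $h_n^2$. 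The $R_n^g$ term comes from $\hat g_n$ and is controlled by Assumption \ref{assump:g_estimator}. By Assumption \ref{assump:sequences}, $\ell_n r_n h_n^\kappa\to0$, and Assumption \ref{assump:g_estimator} gives $\ell_n r_n R_n^g=o_\pr(1)$ in the level case, where $r_n=\sqrt{nh_n}$ and $\ell_n\lesssim\sqrt{|\log h_n|}$. The design-matrix remainder is negligible in the same way. So all of these terms vanish at the required scale.

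\emph{Step 2 (Gaussian coupling).} Apply the strong invariance principle for locally stationary processes of Zhou and Wu \citep{zhou2009} under Assumption \ref{assump:error}. It yields partial sums $\sum_{j\le i}\eps_{j,n}=\sum_{j\le i}\sigma(j/n)V_j+\Oc_\pr(n^{1/4}\log^2 n)$ uniformly in $i$. Summation by parts, using that $K^*$ has bounded variation (Assumption \ref{assump:kern}), transfers this to the kernel sums with error $\Oc_\pr\big(n^{1/4}\log^2 n/(nh_n)\big)$ after the $(nh_n)^{-1}$ weight. Multiplied by $r_n\ell_n$, this is $\Oc\big(\ell_n r_n\log^2 n/n^{3/4}\big)\cdot\Oc(1)$. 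This is exactly why the condition $\ell_n r_n\log^2 n/n^{3/4}=\Oc(1)$ appears, and I expect some care, or a slightly sharper coupling rate, to make it $o(1)$.

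Then replace $\hat\sigma_{\max}(\Ac_n)$ by $\sigma_{\max}(\Ac_n)$. By \eqref{eq:rate_sigma}, $\ell_n^2 n^{-2/7}\to0$, and the supremum is $\Oc_\pr(\ell_n)$, so the relative error times $\ell_n^2$ vanishes. This gives \eqref{eq:gaussian_discrete}.

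\emph{Step 3 (discrete to integral).} Write $\sum_i\sigma(i/n)V_iK^*_{h_n}(i/n-t)$ as a Brownian integral, $V_i=\sqrt n\,(B_{i/n}-B_{(i-1)/n})$, and rescale. The difference to $\int\sigma(x h_n)K^*(x-t/h_n)\diff B_x$ is a Gaussian process. Its variance is $\Oc\big((nh_n)^{-1}\|(K^*)'\|^2\big)$ from the Riemann discretization of the twice differentiable kernel. A chaining or Borell--TIS bound over $\Ac_n$ then gives a supremum of order $\sqrt{\log n/(nh_n)}$, which is negligible after multiplying by $\ell_n$.

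Next freeze $\sigma(xh_n)$ at $\sigma(t)$. On the support $|x-t/h_n|\le1$, Lipschitz continuity (Assumption \ref{assump:error}) gives a perturbation with variance $\Oc(h_n^2)$ and supremum $\Oc_\pr(h_n\sqrt{|\log h_n|})$, again negligible at scale $\ell_n$.

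\emph{Main obstacle.} The delicate point is the uniform control of the remainders at the Gumbel scale $\ell_n$ over the possibly shrinking, $n$-dependent sets $\Ac_n$. In particular, Step 2's coupling error must be matched against $\ell_n r_n$ in the derivative case, where the normalisation $r_n=\sqrt{nh_n^3}$ differs from the $\sqrt{nh_n}$ scale of the Gaussian sum. There I would first verify that the $h_n^{-1}$ factor from differentiating the local linear weights is absorbed consistently into $K^*$, so that the same coupling bound applies.
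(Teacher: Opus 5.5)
Your architecture matches the paper's. You expand $\hat d_n-d$ into a kernel sum of the errors plus an $\Oc(h_n^\kappa)$ bias and the $\hat g_n$ term. You then couple with a Gaussian sum, which the paper imports as the analogue of (A.7) in \cite{bucher2021}. Next you replace $\hat\sigma_{\max}(\Ac_n)$ via \eqref{eq:rate_sigma}, and finally discretize to the Brownian integral with $\sigma$ frozen at $t$. Your Step 3 (variance bounds plus chaining, in two substeps) is a cleaner version of the paper's pathwise modulus-of-continuity estimate. Doing the $\hat\sigma_{\max}$ replacement after the coupling is harmless. The problem is Step 2, which as written does not produce an $o_\pr(1)$ remainder.

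\textbf{The miscounted rate.} In both cases $r_n(\hat d_n-d)=(nh_n)^{-1/2}\sum_i\eps_{i,n}K^*_{h_n}(\tfrac{i}{n}-t)+\dots$. Summation by parts therefore turns a partial-sum coupling error of order $n^{1/4}\log^2 n$ into an error of order $n^{1/4}\log^2 n/\sqrt{nh_n}$ (times the total variation of $K^*$), uniformly in $t$. After the factor $a_n(\Ac_n)\asymp\ell_n$ this is $\ell_n\log^2 n/(n^{1/4}h_n^{1/2})$, which in the level case equals $\ell_n r_n\log^2 n/(n^{3/4}h_n)$. You dropped the factor $h_n^{-1}$ when multiplying $n^{1/4}\log^2 n/(nh_n)$ by $r_n\ell_n$.

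\textbf{Consequence.} The condition $\ell_n r_n\log^2 n/n^{3/4}=\Oc(1)$ that you invoke does not control this term at all. For instance, $h_n=n^{-0.9}$ and $\rho_n=n^{-1/2}$ satisfy Assumption \ref{assump:sequences} as printed in the level case, yet the bound grows like $n^{0.2}$ up to logarithmic factors. Moreover, even with your rate you only reach $\Oc_\pr(1)$, as you acknowledge.

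\textbf{What closes the step.} You need two things:
\begin{itemize}
\item[(i)] the coupling in its $o_\pr(n^{1/4}\log^2 n)$ form, as used in \cite{bucher2021};
\item[(ii)] a bandwidth condition with $h_n$ in the denominator, $\ell_n\log^2 n=\Oc(n^{1/4}h_n^{1/2})$, i.e.\ $\ell_n r_n\log^2 n/(n^{3/4}h_n)=\Oc(1)$ in the level case.
\end{itemize}
This is what the paper's proof actually uses. It carries the remainder $o_\pr(r_n\log^2 n/(n^{3/4}h_n))$ before multiplying by $a_n$, and the discussion after the proof of Theorem \ref{thm:localized_gumbel} lists $r_n(\log n)^2/(n^{3/4}h_n)\to0$. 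The version printed in Assumption \ref{assump:sequences} has lost the $h_n$. With (i) and (ii), Step 2 gives $o_\pr(1)$ and the rest of your argument goes through.

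\textbf{The derivative case.} Your concern here is unfounded. The extra $h_n^{-1}$ from the derivative weights is absorbed into $r_n=\sqrt{nh_n^3}$, so $r_n$ cancels. The coupling error is the same expression $\ell_n\log^2 n/(n^{1/4}h_n^{1/2})$ in both cases.
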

The theorem shows that, after the same normalization as in the final extreme value statistic, the localized estimation error behaves as a Gaussian kernel process with time-varying scale $\sigma(t)$. The first representation, \eqref{eq:gaussian_discrete}, is particularly useful for simulation-based calibration, because it only requires drawing independent standard normal variables. The second representation, \eqref{eq:gaussian_integral}, is more convenient for deriving the extreme value limit.

The signed version needed for stable-interval detection follows immediately. If $d$ is bounded away from zero on $\Ac_n$, then
\begin{align}
	& \frac{r_n a_n(\Ac_n)}{\hat{\sigma}_{\max}(\Ac_n)} \sup_{t\in \Ac_n} - \sgn(d(t)) \big(\hat{d}_n(t) - d(t)\big) \notag \\
	& \stackrel{\Dc}{=} \frac{a_n(\Ac_n)}{\sqrt{nh_n} \sigma_{\max}(\Ac_n)} \sup_{t\in \Ac_n} - \sgn(d(t)) \sum_{i=1}^n \sigma\big(\tfrac{i}{n}\big) V_i K_{h_n}^*\big(\tfrac{i}{n} - t\big)  + o_\pr(1). \label{eq:gaussian_signed}
\end{align}
Because the Gaussian process is symmetric on disjoint components, this signed version has the same extreme value behavior as the unsigned supremum.

The Gaussian approximation can be further reduced to an explicit Gumbel law. The reduction is delicate because the scale $\sigma(t)$ varies over time and because the localized set $\Ac_n$ may contain both non-degenerate intervals and shrinking neighborhoods of isolated points.
Assumption \ref{assump:regularity_sigma} rules out pathological cases in which the effective sample size is determined by a delicate competition between long intervals with slightly smaller variance and very short intervals with maximal variance, and yields a sharp form of the following approximation.

\begin{theorem}[Localized extreme value approximation]
	\label{thm:localized_gumbel}
	Under the assumptions of Theorem \ref{thm:gaussian_approx}, 
	
	\begin{equation} \label{eq:main_conv}
		\liminf_{n\to\infty} \pr\bigg(\frac{r_n\hat{a}_n(\Ac_n)}{\|K^*\|_2 \hat{\sigma}_{\max}(\Ac_n)} \sup_{t\in \Ac_n} \big(\hat{d}_n(t) - d(t)\big) - \hat{a}_n^2(\Ac_n) \le x\bigg) \ge \exp(-\exp(-x)),
	\end{equation}
	for every $x\in\R$. If, in addition, Assumption \ref{assump:regularity_sigma} (1) or (2) is satisfied, equality holds in the previous display.
\end{theorem}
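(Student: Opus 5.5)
Starting from the integral representation \eqref{eq:gaussian_integral} of Theorem \ref{thm:gaussian_approx}, it suffices to analyse the Gaussian field $G_n(t)=\int_\R \sigma(t)K^*(x-t/h_n)\diff B_x/\|K^*\|_2$. The remaining steps are then: show that $\hat a_n(\Ac_n)$ and $\hat\sigma_{\max}(\Ac_n)$ may be replaced by $a_n(\Ac_n)$ and $\sigma_{\max}(\Ac_n)$, and derive the Gumbel limit for $a_n\sup_{t\in\Ac_n}G_n(t)/\sigma_{\max}-a_n^2$. Writing $u=t/h_n$, the process $Z(u)=\int K^*(x-u)\diff B_x/\|K^*\|_2$ is stationary, unit-variance, with covariance $r(\tau)=1-\Lambda_2^2\tau^2/2+o(\tau^2)$ and $r(\tau)=0$ for $|\tau|\ge 2$. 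Thus $G_n(t)=\sigma(t)\|K^*\|_2 Z(t/h_n)$ is a locally rescaled stationary process with compactly supported correlation.

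\textbf{Plug-in step.} By \eqref{eq:rate_sigma}, $\hat\sigma_{\max}/\sigma_{\max}=1+O_\pr(n^{-2/7})$. For the $a_n$ perturbation we need $a_n^2 n^{-2/7}\to0$, which holds since $a_n^2=O(|\log h_n|)$. For $\widehat W_n$, each summand in \eqref{eq:effective_sample_size} changes by the factor $\exp(O_\pr(\ell_n^2 n^{-2/7}))=1+o_\pr(1)$. Hence $\log\widehat W_n-\log W_n=o_\pr(1)$, and consequently $\hat a_n-a_n=o_\pr(\ell_n^{-1})$. This perturbs the normalized statistic by $o_\pr(1)$.

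\textbf{Blocking step.} Using the decomposition \eqref{eq:set_decomposition}, I would separate each block $[\xi_{\nu,i,n},\xi_{\nu+1,i,n}]$ into a core of length $\rho_n$ and gaps of length $2h_n$. Because the correlation of $Z$ vanishes beyond lag $2$, the suprema over different cores are exactly independent. On each core, $\sigma(t)=\sigma(\xi_{\nu,i,n})(1+O(\rho_n))$ by the Lipschitz continuity in Assumption \ref{assump:error}. Since $\rho_n\ell_n^2\to0$ by Assumption \ref{assump:sequences}, freezing $\sigma$ costs only $o(1)$ on the Gumbel scale. The gaps and the remainder $R_n$ have relative measure $O(h_n/\rho_n+\rho_n/\theta_n)\to0$. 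Their supremum, which is dominated by a union bound at level $a_n$, is therefore asymptotically negligible. This is where only an inequality is immediate: discarding points can only decrease the supremum, which gives the lower bound for the probability, i.e. the $\liminf\ge$ in \eqref{eq:main_conv}.

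\textbf{Block probabilities.} For a single core, Pickands' theorem with the Rice constant gives
\[
\pr\Big(\sup_{u\in[0,\rho_n/h_n]}Z(u)>v\Big)=\frac{\Lambda_2\rho_n}{2\pi h_n}e^{-v^2/2}(1+o(1)).
\]
This holds uniformly in $v$ near $\ell_n$, and the error control uses $h_n\le\rho_n^{1+\eta}$. We apply it with $v=\sigma_{\max}(a_n+x/a_n)/\sigma(\xi_{\nu,i,n})$. Expanding $v^2/2$, each block probability equals $e^{-\ell_n^2[\sigma_{\max}^2/\sigma^2(\xi)-1]/2}\cdot e^{-(a_n^2-\ell_n^2)/2-x}(1+o(1))$, up to the terms from cross-multiplying the ratio with $x$. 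These cross terms vanish exactly when $\ell_n^2(\sigma_{\max}^2/\sigma^2-1)$ is $O(1)$, and are bounded in favourable direction otherwise. Summing over blocks yields $W_n e^{-(a_n^2-\ell_n^2)/2}e^{-x}$. The defining equation of $W_n^{(c)}$ gives $(a_n^2-\ell_n^2)/2=\log W_n^{(c)}+\log^2W_n^{(c)}/(2\ell_n^2)=\log W_n$, so the sum equals $e^{-x}$. Independence of the cores then gives the limit $\exp(-e^{-x})$ for the product.

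\textbf{Main obstacle: the equality case.} The hard part is controlling blocks where $\sigma(\xi)$ is slightly below $\sigma_{\max}$. There the Pickands approximation with an inflated level $v$ and the linearization in $x$ are not uniform, so in general one only gets an upper bound on the exceedance sum, which again gives the conservative inequality. For equality I would use Assumption \ref{assump:regularity_sigma}. Under part (1), a non-degenerate interval where $\sigma=\sigma_{\max}$ contains order $\rho_n^{-1}$ blocks with exact maximal scale. Blocks whose variance ratio exceeds $1+C\log\ell_n/\ell_n^2$ then contribute $o(W_n)$, so only near-maximal blocks, where uniformity holds, matter. Under part (2), the growth condition $\log|\log h_n|\,|\log\rho_n|/|\log h_n|\to0$ ensures that $\log W_n=o(\ell_n^2)$. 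Together with $\ell_n^{-2}\le\theta_n$, this keeps the number of effective blocks from being dominated by vanishing shells, which again makes the expansion uniform. Establishing this uniform truncation, rather than the blocking itself, is the step I expect to require the most care.
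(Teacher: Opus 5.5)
Your architecture matches the paper's: Theorem \ref{thm:gaussian_approx}, the $\hat a_n\to a_n$ replacement as in Lemma \ref{lem:wn_estimation}, exactly independent cores separated by $2h_n$, freezing $\sigma$ at cost $\Oc(\rho_n\ell_n^2)$, Theorem \ref{thm:evt} on each block, and summation against $W_n$. The gap is in your block-probability expansion. Write $r_\xi=\sigma_{\max}^2/\sigma^2(\xi)\ge 1$ and use $a_n^2=\ell_n^2+2\log W_n$. The level $v_\xi=\sqrt{r_\xi}\,(a_n+x/a_n)$ then satisfies $v_\xi^2/2=r_\xi(\ell_n^2/2+\log W_n+x)+o(1)$, so
\begin{equation*}
p_\xi=(1+o(1))\,\frac{e^{-x}}{W_n}\,e^{-(r_\xi-1)\ell_n^2/2}\,e^{-(r_\xi-1)(\log W_n+x)} .
\end{equation*}
You dropped the factor $e^{-(r_\xi-1)\log W_n}$: the ratio $r_\xi$ multiplies $a_n^2-\ell_n^2=2\log W_n$, not only $x$. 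This factor is exactly where the inequality comes from. Since $W_n\to\infty$, eventually $\log W_n+x\ge 0$, so $\sum_\xi p_\xi\le e^{-x}+o(1)$, which gives the $\liminf\ge$ statement. With this factor restored, your blocking argument proves the first claim. Your own explanations of the inequality are not the right ones. Discarding the gaps gives $\pr(\sup_{\Ac_n}\le y)\le\pr(\sup_{\mathrm{cores}}\le y)$, which is the upper bound; the lower bound comes from your union bound on the gaps. Also, Theorem \ref{thm:evt} is uniform over all levels with $(v/\ell_n-1)\ell_n^2\to\infty$, and this holds for every block because $\log W_n^{(c)}\to\infty$.

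The missing factor also breaks your equality argument. Equality requires
\begin{equation*}
W_n^{-1}\sum_\xi e^{-(r_\xi-1)\ell_n^2/2}\{1-e^{-(r_\xi-1)\log W_n}\}\to 0.
\end{equation*}
Under Assumption \ref{assump:regularity_sigma} (1), $\log W_n\asymp|\log\rho_n|$ can be of the same order as $\ell_n^2$; it is, for polynomial $h_n,\rho_n$. So on blocks with $0<r_\xi-1\le C\log\ell_n/\ell_n^2$ the dropped factor can be as small as $\ell_n^{-c}$. Your truncation therefore does not isolate blocks on which the expansion is uniform. What is needed is a counting argument. First, $W_n\ge C\rho_n^{-1}$ from the blocks inside the interval where $\sigma=\sigma_{\max}$. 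Second, the shell $\{0<1-\sigma^2/\sigma_{\max}^2\le d_n\}$ contains only $o(\rho_n^{-1})$ block points, by the finite-union structure of $\{\sigma=\sigma_{\max}\}$. Together these give $\eps_n$ with $\eps_n\log W_n\to 0$ such that blocks with $r_\xi-1\le\eps_n$ carry asymptotically all of $W_n$; this is Proposition \ref{prop:sequences} (4). Under part (2) your instinct is right, but the requirement is sharper than $\log W_n=o(\ell_n^2)$. Since there only $W_n\ge C\ell_n^{-2}\rho_n^{-1}$ holds, the truncation must be at $\eps_n\asymp\log\ell_n/\ell_n^2$ (more precisely $(4+\eps)\log\ell_n/\ell_n^2$). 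One then needs $\log\ell_n\cdot\log W_n=o(\ell_n^2)$, which is precisely what the condition $\log|\log h_n|\,|\log\rho_n|/|\log h_n|\to 0$ provides.
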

The inequality in Theorem \ref{thm:localized_gumbel} is sufficient for level control in the stable-interval test derived from Theorem \ref{thm:main_conv}, since the latter only gives a lower bound. However, to properly estimate $W_n = W_n(\Ec(\phi_n))$ with $\widehat{W}_n = \widehat{W}_n(\hat{\Ec}(\phi_n))$, Assumption \ref{assump:regularity_sigma} (1) or (2) must hold, which gives the sharp limit in \eqref{eq:main_conv}. In particular, the theorem extends kernel supremum approximations from the constant-variance case to locally stationary errors with time-varying long-run variance.

The same conclusion holds for the signed statistic in \eqref{eq:gaussian_signed}. Consequently, whenever $d$ is bounded away from zero on $\Ac_n$,
\begin{align}
	& \liminf_{n\to\infty} \pr\bigg( \frac{r_n\hat a_n(\Ac_n)}{\|K^*\|_2\hat\sigma_{\max}(\Ac_n)} \sup_{t\in\Ac_n} -\sgn\big(d(t)\big)	\big(\hat d_n(t)-d(t)\big) - \hat a_n^2(\Ac_n) \le x \bigg) \notag \\
	& \ge \exp\big(-\exp(-x)\big),
	\label{eq:signed_localized_gumbel}
\end{align}
with equality under Assumption \ref{assump:regularity_sigma} (1) or (2).

\subsection{A Pickands-type Tail Approximation}
\label{subsec:pickands}

The extreme value approximation in Theorem \ref{thm:localized_gumbel} rests on a local tail expansion for the stationary Gaussian process induced by the kernel. We state the result separately because it may be useful in other kernel supremum problems.
Let
\begin{equation*}
	X(t) = \frac{1}{\|K\|_2} \int_{\R}K(x-t) \diff B_x,
\end{equation*}
where $B$ is a standard Brownian motion. Then $X$ is stationary, centered, and has unit variance. Its covariance satisfies
\begin{equation*}
	\cov(X(t), X(0)) = 1 - \frac{\Lambda_2^2}{2} t^2 + o(t^2), 
\end{equation*}
as $t\to 0$, with $\Lambda_2=\|K'\|_2/\|K\|_2$.

\begin{theorem}[Local Pickands approximation for kernel Gaussian processes] \label{thm:evt}
	Let $K$ satisfy Assumption \ref{assump:kern}, $\Lambda_2 = \|K'\|_2/\|K\|_2$ and $C_0 = \Lambda_2^2/2$. 
	Further, let $(h_n)_{n\in\N}$ and $(\rho_n)_{n\in\N}$ be positive sequences converging to $0$ with $h_n = o(\rho_n)$, as $n\to\infty$, and define
	$\ell_n = \sqrt{2 \log(\Lambda_2 \rho_n/(2 \pi h_n))}$. 
	Finally, let $\{c_{k, n}\}_{1\le k \le K_n, n\in\N}$ denote coefficients satisfying $(\inf_{k=1}^{K_n} c_{k, n} - 1)\ell_n^2 \to \infty$, and set $g_{n,k} = c_{k, n}\ell_n$.
	Then,  
	\begin{equation*} \label{eq:pickands_tail}
		\lim_{n\to\infty} \sup_{1 \le k\le K_n}\bigg|\frac{\pr\big(\sup_{t\in[0, \rho_n/h_n]} X(t) > g_{n, k}\big)}{C_0^{1/2} (\rho_n/h_n) g_{n, k} \Psi(g_{n,k})} - H_2\bigg| = 0,
	\end{equation*}
	where $H_2 = \pi^{-1/2}$ denotes Pickand's constant and $\Psi(x) = (2\pi)^{-1/2} \int_x^\infty \exp(-y^2/2)\diff y$.
\end{theorem}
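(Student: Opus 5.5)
The natural route is the classical double-sum method of Pickands. It is applied on the rescaled time axis $u=t/h_n$, where the block $[0,\rho_n/h_n]$ has length $T_n:=\rho_n/h_n\to\infty$. I would first fix a small $\varepsilon>0$ and compare everything with the exact local statement $\pr(\sup_{t\in[0,T]}X(t)>u)\sim H_2 C_0^{1/2} T u\Psi(u)$. For fixed $T$ and $u\to\infty$ this is Pickands' theorem, once we know $1-r(t)=C_0t^2+o(t^2)$ with $r(t)=\cov(X(t),X(0))$. Since $K$ is twice differentiable with compact support, $r(t)=\int K(x)K(x-t)\diff x/\|K\|_2^2$ is $C^2$, and the expansion holds with $C_0=\Lambda_2^2/2$. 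Because the exponent is $2$, the Pickands constant is $H_2=\pi^{-1/2}$. What remains is to make this uniform when $T=T_n$ grows and the level $g_{n,k}$ varies with $k$.

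The key steps are as follows. First I would check that the levels are uniformly high: the condition $(\inf_k c_{k,n}-1)\ell_n^2\to\infty$ gives $g_{n,k}^2-\ell_n^2\to\infty$ uniformly in $k$. By the definition of $\ell_n$, this yields $T_n g_{n,k}\Psi(g_{n,k})\to 0$ uniformly, so each block sees a rare event. Second, I would split $[0,T_n]$ into subintervals $J_j=[ja/g,(j+1)a/g]$ with $g=g_{n,k}$ and a fixed $a>0$. For one such short interval the local Pickands asymptotic gives $\pr(\sup_{J_j}X>g)\sim H_2(a)\Psi(g)$, uniformly in $g\to\infty$, where $H_2(a)/a\to H_2C_0^{1/2}$ as $a\to\infty$. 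This step uses only stationarity and the $C^2$ covariance expansion, so uniformity in $k$ is automatic: the error depends on $g$ only through $g\ge\ell_n\to\infty$. Summing over the roughly $T_n g/a$ subintervals gives the upper bound
\[
\pr\Big(\sup_{[0,T_n]}X>g\Big)\le (T_ng/a+1)H_2(a)\Psi(g)(1+o(1)),
\]
and hence $\limsup$ of the ratio is at most $H_2(1+\varepsilon)$ after letting $a\to\infty$. Third, for the lower bound I would use Bonferroni:
\[
\pr\Big(\sup_{[0,T_n]} X>g\Big)\ge\sum_j\pr\Big(\sup_{J_j}X>g\Big)-\sum_{i<j}\pr\Big(\sup_{J_i}X>g,\ \sup_{J_j}X>g\Big).
\]

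The main obstacle is the double sum, which has to be shown to be $o(T_n g\Psi(g))$ uniformly in $k$. I would split the pairs into three groups. For adjacent or nearby pairs with $|i-j|a/g\le\epsilon_0$, I would use the standard Pickands estimate for close intervals; it bounds that contribution by $T_ng\Psi(g)\cdot O(a^{-1}+e^{-ca^2})$, which is small for large $a$. For pairs with $\epsilon_0<|t-s|\le 2$, the compact support of $K$ gives $\sup_{\epsilon_0\le|t|}r(t)<1$ strictly. I would state this separately: if $r(t)=1$ then $K(\cdot-t)=K$, which is impossible for a nonzero, compactly supported $K$ and $t\neq0$. Then the bivariate normal tail bound gives each pair probability at most $\Psi(g)\exp(-\eta g^2)$. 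Summed over the at most $(T_ng)\cdot g$ such pairs, this is negligible because $g\exp(-\eta g^2)\to0$. For pairs with $|t-s|>2$, we have $r\equiv0$, so the two suprema are independent. Their joint probability is therefore the product, and the sum is at most $(T_ng\Psi(g))^2 H_2(a)^2/a^2$, which is $o(T_ng\Psi(g))$ by the first step. All of these bounds depend on $k$ only through $g\ge\ell_n$, and this gives the supremum over $k$.

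Finally, I would combine the two bounds and let $a\to\infty$ and then $\varepsilon\to0$, which yields the stated uniform limit $H_2$.
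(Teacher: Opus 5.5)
Your proposal is correct and follows essentially the paper's route. It uses a uniform single-block Pickands asymptotic: the paper imports this from Theorem~2.2 of \cite{debicki2017}, while you rightly observe that by stationarity the block probabilities depend on $k$ only through $g_{n,k}\ge\ell_n$. It then uses the union bound and Bonferroni, with the double sum split into close, medium-range and independent far pairs; the far pairs are negligible because $(\rho_n/h_n)g_{n,k}\Psi(g_{n,k})\to0$ uniformly, and the block length is then sent to infinity. The one step to tighten is the medium-range one: the bound $\Psi(g)e^{-\eta g^2}$ concerns suprema over the blocks $J_i,J_j$, so it should come from Borell's inequality applied to the field $X(s)+X(t)$, $(s,t)\in J_i\times J_j$, whose variance is at most $2\bigl(1+\sup_{|u|\ge\epsilon_0}r(u)\bigr)<4$ (as the paper does, following Piterbarg's (D.12)), rather than from a pointwise bivariate normal tail bound.
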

The theorem is a high-level local tail approximation for a kernel-induced Gaussian process. 
Combined with the block decomposition of $\Ac_n$, it yields the effective sample size $W_n(\Ac_n)$ and the centering $a_n(\Ac_n)$ appearing in Theorem \ref{thm:localized_gumbel}.

\begin{remark}[Odd kernels]
	For derivative estimation, the equivalent kernel is typically odd. 
	The preceding theorem remains valid for odd kernels $K(x)=-K(-x)$, supported on $[-1,1]$ and twice differentiable, with the corresponding curvature constant
	\begin{equation*}
		\Lambda_2^2 = \frac{\int_{\R} x^2[K'(x)]^2 \diff x}{\int_{\R} x^2K^2(x) \diff x}.
	\end{equation*}
\end{remark}

\begin{remark}[General index sets]
	Similar to Theorem 7.2 of \cite{piterbarg1996}, the interval $[0,\rho_n/h_n]$ in Theorem \ref{thm:evt} can be replaced by more general regular sets $\Bc_n$. In that case the normalizing logarithm becomes $\ell_n = \sqrt{2 \log(\Lambda_2 \lambda(\Bc_n)/(2 \pi))}$ and the factor $\rho_n/h_n$ in \eqref{eq:pickands_tail} is replaced by $\lambda(\Bc_n)$.
\end{remark}

\subsection{Gaussian Calibration}
\label{subsec:gaussian_calibration}

The Gumbel approximation in Theorem~\ref{thm:localized_gumbel} gives an explicit and analytically convenient critical value. However, as is typical for extreme value approximations of suprema, convergence to the limiting Gumbel distribution may be slow. For finite samples, it is therefore natural to use the Gaussian approximation in Theorem \ref{thm:gaussian_approx} directly.

Let $\hat{\Ac}_n$ denote the estimated localization set used in the statistic. In the stable-interval application, $\hat{\Ac}_n=\hat{\Ec}(\phi_n)$. Generate independent standard normal variables $V_1^{(b)}, \dots, V_n^{(b)}$, for $b=1, \dots, B$ and compute
\begin{equation*}
	Z_n^{(b)}(\hat{\Ac}_n)
	= \sup_{t\in\hat{\Ac}_n} \bigg[	-\sgn\big(\hat d_n(t)\big) \frac{1}{\sqrt{nh_n}} \sum_{i=1}^n \hat\sigma_n\big(\tfrac{i}{n}\big) V_i^{(b)} K_{h_n}^*\big(\tfrac{i}{n} - t\big) \bigg].
\end{equation*}
Let $\hat q_{1-\alpha}^{(G)}$ be the empirical $(1-\alpha)$-quantile of 
\begin{equation*}
	\frac{\hat a_n(\hat{\Ac}_n)}{\|K^*\|_2\hat\sigma_{\max}(\hat{\Ac}_n)} Z_n^{(b)}(\hat{\Ac}_n) - \hat a_n^2(\hat{\Ac}_n), \qquad b=1,\dots,B.
\end{equation*}
The Gaussian-calibrated version of the stable-interval test rejects $H_0:d_\infty\ge\Delta$ whenever
\begin{equation} \label{eq:gaussian_calibrated_rule}
	T_n(\Delta)	> \hat q_{1-\alpha}^{(G)}.
\end{equation}
Equivalently, one may avoid the extreme value normalization altogether and compare the uncentered statistic $r_n(\Delta-\hat d_{n,\infty})$ with the empirical quantile of $Z_n^{(b)}(\hat{\Ac}_n)$, for $b=1, \dots, B$.

Both formulations are based on the same Gaussian approximation. The normalized version in \eqref{eq:gaussian_calibrated_rule} is directly comparable to the Gumbel-calibrated statistic, while the uncentered version is simpler to implement.

The Gaussian calibration keeps the local variance profile, the kernel covariance structure, and the geometry of the estimated extremal set in the critical value. 
It therefore avoids the final extreme value approximation and is expected to provide more accurate finite-sample critical values, especially when the effective sample size $W_n(\hat{\Ac}_n)$ is moderate.

\subsection{Consequences for Relevant-change Testing}
\label{sec:relevant_change}

The localized extreme value theory developed above also yields an extension of the
supremum-type relevant-change tests of \cite{bucher2021}. Their results cover hypotheses based on the sup-norm of a gradually varying mean function, under a calibration in which the long-run variance is constant over time. The present theory removes this restriction and also applies to signals involving the derivative of the mean function.

To explain the connection, consider first a relevant-change hypothesis of the form
\begin{equation*}
	H_0^{\mathrm{rel}}: d_\infty^{\mathrm{rel}}\le \Delta \qquad\text{vs.}\qquad	H_1^{\mathrm{rel}}:	d_\infty^{\mathrm{rel}}> \Delta,
\end{equation*}
where $d_\infty^{\mathrm{rel}} = \sup_{t\in I}|d(t)|$ for some compact interval $I\subset[0,1]$. Typical examples are $d(t)=\mu(t)-g(\mu,t)$ or $d(t)=\mu'(t)$. The first case corresponds to relevant deviations of the mean from a reference curve, while
the second case corresponds to relevant departures from local flatness.

Let $\Mc = \{ t\in I: |d(t)|=d_\infty^{\mathrm{rel}}\}$ be the extremal set of the supremum functional and define its near-extremal enlargement by $\Mc(\phi_n) = \{ t\in I:
d_\infty^{\mathrm{rel}}-|d(t)|\le \phi_n \}$. The same localization principle as in Section \ref{sec:localized_test} implies that the asymptotic distribution of $\hat d_{\infty}^{\mathrm{rel}} = \sup_{t\in I}|\hat d_n(t)|$ is determined by the signed estimation error on $\Mc(\phi_n)$. Consequently, the general Gaussian and extreme value approximations of Section \ref{sec:gaussian_evt} apply with $\Ac_n=\Mc(\phi_n)$.

The resulting calibration differs from the one in \cite{bucher2021} in two ways. First, the local long-run variance $\sigma^2(t)$ is allowed to vary with $t$. The limiting centering and scaling are therefore governed by $\sigma_{\max}\big(\Mc(\phi_n)\big) = \sup_{t\in\Mc(\phi_n)}\sigma(t)$ and by the effective sample size $W_n\big(\Mc(\phi_n)\big)$.

Thus, only blocks whose local variance is close to the maximal variance over the
near-extremal region contribute substantially to the extreme value centering. When $\sigma(t)$ is constant, $W_n\big(\Mc(\phi_n)\big)$ reduces to the usual number of effective blocks and the calibration collapses to the constant-variance form.

Second, the same theory applies to derivative-based hypotheses. For example, one may test whether the slope of the mean function is relevant somewhere,
\begin{equation*}
	H_0^{\mathrm{rel}}:	\sup_{t\in I}|\mu'(t)|\le \Delta \qquad\text{vs.}\qquad	H_1^{\mathrm{rel}}:	\sup_{t\in I}|\mu'(t)|>\Delta.
\end{equation*}

In this case $d=\mu'$, the pointwise rate is $r_n=\sqrt{nh_n^3}$, and the corresponding equivalent kernel is the derivative kernel $K^*(x)=xK(x) / \int y^2 K(y) \diff y$. The Gumbel calibration is obtained from Theorem \ref{thm:localized_gumbel} with these choices of $r_n$ and $K^*$.

More explicitly, let $\hat{d}_\infty^{\mathrm{rel}} = \sup_{t\in I}|\hat d_n(t)|$ and let $\widehat{\Mc}(\phi_n)$ be the plug-in version of $\Mc(\phi_n)$. Define
\begin{equation*}
	T_n^{\mathrm{rel}}(\Delta)
	= \frac{r_n\hat a_n\big(\widehat{\Mc}(\phi_n)\big)}{\|K^*\|_2 \hat\sigma_{\max}\big(\widehat{\Mc}(\phi_n)\big)} (\hat d_\infty^{\mathrm{rel}} - \Delta) -
	\hat a_n^2\{\widehat{\Mc}(\phi_n)\}.
\end{equation*}
Under the analogue of the extremal-set regularity assumptions used above,
\begin{equation*}
	T_n^{\mathrm{rel}}(d_\infty^{\mathrm{rel}}) \convw G,
\end{equation*}
where $G$ is standard Gumbel distributed, provided the equality conditions in Theorem \ref{thm:localized_gumbel} hold. Therefore, the relevant-change test rejects $H_0^{\mathrm{rel}}$ for large values of $T_n^{\mathrm{rel}}(\Delta)$, using either the Gumbel quantile or the Gaussian calibration from Section \ref{subsec:gaussian_calibration}.

This shows that the present extreme value theory strictly extends the calibration of \cite{bucher2021}. It permits time-varying long-run variance, allows extremal regions that may shrink to isolated points, and covers both level-based and derivative-based signals.

%% file: sec_empirical_results.tex
\section{Empirical Results} \label{sec:empirical}

We now study the finite-sample behavior of the proposed stable-interval tests and illustrate their use in two applications. The implementation choices described below are used both in the simulations and in the data examples, unless stated otherwise.\footnote{Python implementations of the methods and experiments are available on GitHub: \url{https://github.com/FlorianHeinrichs/StableIntervalDetection}.}

For local linear estimation, the quartic kernel $K(x) = 15 / 16 (1 - x^2)^2$ was used. The bandwidth $h_n \in [0.03, 0.25]$ is tuned with $k$-fold cross validation, using $k=5$ random splits, to minimize the mean-squared error of the jackknife estimator, in the level case $d = \mu - g(\mu)$, and of the plain local linear estimator, in the derivative case $d = \mu'$.

The tuning parameter $\rho_n$ determines the block length used to approximate the effective number of approximately independent local maxima in the near-extremal set. The asymptotic theory requires the kernel bandwidth $h_n$ to be smaller than the block length, while $\rho_n$ itself still tends to zero, that is $h_n = o(\rho_n)$ and $\rho_n \to 0$. In the simulations we therefore use the default choice $\rho_n=\sqrt{h_n}$, enlarged if necessary to ensure that the logarithmic centering term $\ell_n=\{2\log(\Lambda_2\rho_n/(2\pi h_n))\}^{1/2}$ is well defined. 

The localization parameter $\phi_n$ determines the estimated near-extremal set $\hat\Ec (\phi_n)$, over which the Gaussian and extreme value calibrations are computed. The theory requires $\phi_n$ to dominate the uniform stochastic error of the estimator of the stability signal, because the estimated near-extremal set must contain the relevant population extremal region with high probability. Accordingly, we use a rate-based finite-sample rule proportional to $\widehat{\mathrm{sd}}(\hat d_n)\sqrt{|\log h_n|}/r_n$, where $r_n=\sqrt{nh_n}$ for level-based stability and $r_n=\sqrt{nh_n^3}$ for derivative-based stability. To avoid degenerate estimated extremal sets in small samples, this stochastic rule is combined with a small fraction of the empirical range of $\hat{d}_n$, such that
\begin{equation*}
	\phi_n = \max\Big\{2 \widehat{\mathrm{sd}}(\hat d_n) \frac{\sqrt{|\log h_n|}}{r_n}, 0.05 \Big[\max_{t\in[0, 1]} \hat d_n(t) - \min_{t\in[0, 1]} \hat d_n(t)\Big]\Big\}.
\end{equation*}
These choices are not intended to be optimal tuning rules, but rather stable default choices satisfying the qualitative rate requirements of the asymptotic theory. 

\subsection{Simulation Study} \label{sec:sim_study}

We consider the location scale model $X_{i, n} = \mu(i/n) + \sigma(i/n) \eps_{i, n}$, with different mean functions $\mu$, a smooth standard deviation $\sigma(t) = \tfrac{1}{2}- \tfrac{1}{4}\cos(2 \pi t)$ and locally stationary errors $(\eps_{i, n})$. More specifically, define
\begin{equation*}
	\eps_{i, n} = \sqrt{a(i/n)} \eps_i^{(1)} + \sqrt{1-a(i/n)} \eps_i^{(2)}, \qquad a(t) =  \tfrac{1}{2}[1 - \cos(\tfrac{\pi}{2}[-\cos(\pi t) + 1])],
\end{equation*}
where $\eps_i^{(k)}$ satisfies the AR-equation $\eps_i^{(k)} = \tfrac{\sqrt{3}}{2}(\eta_i^{(k)} + \tfrac{1}{2} \eps_{i-1}^{(k)})$, for $k=1, 2$, and $(\eta_i^{(1)})_{i\in\Z}$, $(\eta_i^{(2)})_{i\in\Z}$ denote independent sequences of i.i.d. random variables with $\eta_i^{(1)} \sim\Nc(0, 1)$ and $\eta_i^{(2)}\sim \Uc([-\sqrt{3}, \sqrt{3}])$.

We use four mean functions with different properties, displayed in Figure \ref{fig:mu}. Let $d_{\infty}(\delta, d_0)$ denote the minimax deviation computed from some signal $d_0$ at interval length $\delta \ge 0$.
For derivative-based stability, $d_0=\mu'$, we consider
\begin{equation*}
	\mu_1(t)=-ct-(1-c)\{1-\exp(-\kappa t)\}/\kappa,
	\qquad
	\mu_2(t)=\beta t+\{\exp(-\kappa t)-1\}/\kappa.
\end{equation*}
Here $\mu_1$ is a monotone curve with decreasing slope, while $\mu_2$ settles transiently and then drifts. We use $c=0.1$, $\beta=0.15$, and $\kappa=6$. 
For level-based stability, $d_0 = \mu$, we consider $\mu_3(t) = t-1/2$, and 
\begin{equation*}
	\mu_4(t)= S\bigg(\frac{t-a}{m-a}\bigg) \bigg\{1-S\bigg(\frac{t-m}{b-m}\bigg)\bigg\}, 
\end{equation*}
based on the smoothstep transition 
\begin{equation*}
	S(x)=(10x^3-15x^4+6x^5) \id_{[0, 1]}(x)+ \id_{(1, \infty)}(x).
\end{equation*}
Hence, $\mu_4$ is a smooth excursion from the fixed baseline $g\equiv0$. We set $a=0.25$, $m=0.5$, and $b=0.75$, so that $d = \mu_4$ has $d_\infty(\delta, d)=0$ for $\delta\le0.25$. For the other functions,  $d_{\infty}(\delta, d_0) \neq 0$, and we define the normalized signal $d(t) = d_0(t) / d_{\infty}(\delta, d_0)$, so that $d_\infty(\delta)=1$. 
Thus $\mu_1,\mu_2,\mu_3$ assess boundary behavior, while $\mu_4$ assesses power under a genuine stable interval.

\begin{figure}[t]
	\centering
	\includegraphics[width=\textwidth]{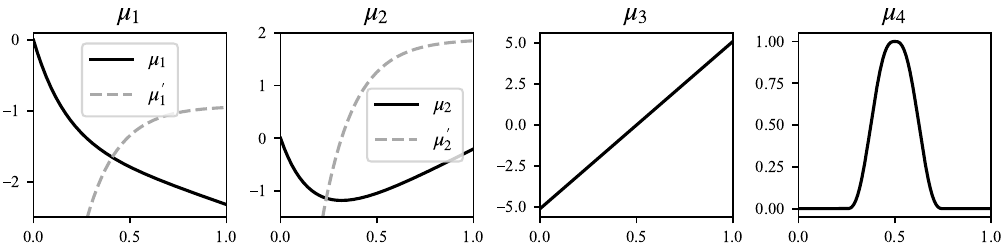}
	\caption{Various mean functions, used to generate time series.}
	\label{fig:mu}
\end{figure}

To obtain critical values for the Gaussian-calibrated test decision \eqref{eq:gaussian_calibrated_rule}, we simulated $1000$ trajectories of the Gaussian approximation. For all settings, we generated $1000$ time series and tested $H_0$ with level $\alpha = 5\%$. Table \ref{tab:simulation_results} contains empirical rejection rates for different choices of $n$ with $\delta =0.2$. Additional results for i.i.d., MA- and AR-errors, as well as different choices of $\sigma$ and $\delta$, are provided in Appendix \ref{app:appendix_empirical_results}.

For $\mu \in \{\mu_1, \mu_2, \mu_3\}$, tolerances $\Delta < 1$ correspond to the interior of the null hypothesis, whereas $\Delta = 1$ is its boundary, and values $\Delta > 1$ correspond to the alternative. For $\mu = \mu_4$, any $\Delta > 0$ corresponds to the alternative. The results suggest that the test based on Gumbel quantiles is conservative, as expected from the construction of the test decision. Moreover, the functions are defined, such that only a single optimal window exists, and the effective sample sizes $W_n$, defined in \eqref{eq:effective_sample_size}, are small. As hypothesized in Section \ref{subsec:gaussian_calibration}, the results are slightly better under the Gaussian calibration. For small $n$, the test exceeds the level of $5\%$ for $\mu = \mu_1$, and estimates it well for $\mu= \mu_2$ at $\Delta = 1$. In the level-stability case, $\mu = \mu_3$, the test is once again conservative, with increasing power as $n$ grows. Interestingly, the power seems to decrease in the derivative-based setting under a Gaussian calibration. Note that the test statistic and its Gaussian approximation are only asymptotically equivalent, and the convergence to the limiting Gumbel distribution is usually slow. Asymptotically, the term $r_n = \sqrt{nh_n^3}$ dominates $a_n = \Oc(\sqrt{|\log h_n|})$, yet, for small values of $n$, $a_n$ can exceed $r_n$. 
The alternative $\mu = \mu_4$ reflects the previous findings. Both decision rules yield conservative results, with increasing power, as $n$ or $\Delta$ grow.

\begin{table}[thb]
	\caption{Empirical rejection rates for $\delta = 0.2$.}
	\label{tab:simulation_results}
	\begin{tabular}{l|rrrrr|rrrrr}
		\toprule
		& \multicolumn{5}{c|}{Gaussian calibration \eqref{eq:gaussian_calibrated_rule}} & \multicolumn{5}{c}{Gumbel calibration \eqref{eq:decision_rule}} \\
		$n \quad \setminus \quad \Delta$ & $0.8$ & $0.9$ & $1.0$ & $1.1$ & $1.2$ & $0.8$ & $0.9$ & $1.0$ & $1.1$ & $1.2$ \\
		\midrule
		\multicolumn{11}{l}{\textit{Panel A: $\mu = \mu_1$}}\\	
		200  & 0.029 & 0.047 & 0.094 & 0.118 & 0.150 & 0.002 & 0.003 & 0.008 & 0.015 & 0.021 \\
		500  & 0.010 & 0.023 & 0.033 & 0.060 & 0.085 & 0.000 & 0.002 & 0.006 & 0.013 & 0.022 \\
		1000 & 0.010 & 0.013 & 0.022 & 0.041 & 0.070 & 0.003 & 0.005 & 0.010 & 0.013 & 0.025 \\
		\midrule
		\multicolumn{11}{l}{\textit{Panel B: $\mu = \mu_2$}}\\			
		200  & 0.026 & 0.039 & 0.051 & 0.071 & 0.101 & 0.001 & 0.004 & 0.007 & 0.015 & 0.026 \\
		500  & 0.014 & 0.024 & 0.037 & 0.053 & 0.078 & 0.002 & 0.004 & 0.011 & 0.017 & 0.028 \\
		1000 & 0.003 & 0.014 & 0.028 & 0.044 & 0.058 & 0.000 & 0.002 & 0.004 & 0.016 & 0.033 \\
		\midrule
		\multicolumn{11}{l}{\textit{Panel C: $\mu = \mu_3$}}\\	
		200  & 0.000 & 0.000 & 0.003 & 0.020 & 0.104 & 0.000 & 0.000 & 0.000 & 0.000 & 0.000 \\
		500  & 0.000 & 0.000 & 0.001 & 0.051 & 0.246 & 0.000 & 0.000 & 0.000 & 0.001 & 0.019 \\
		1000 & 0.000 & 0.000 & 0.000 & 0.080 & 0.376 & 0.000 & 0.000 & 0.000 & 0.001 & 0.065 \\
		\midrule
		\midrule
		$n \quad \setminus \quad \Delta$ & $0.1$ & $0.2$ & $0.3$ & $0.4$ & $0.5$ & $0.1$ & $0.2$ & $0.3$ & $0.4$ & $0.5$ \\
		\midrule
		\multicolumn{11}{l}{\textit{Panel D: $\mu = \mu_4$}}\\			
		200  & 0.000 & 0.000 & 0.000 & 0.040 & 0.243 & 0.000 & 0.000 & 0.000 & 0.000 & 0.009 \\
		500  & 0.000 & 0.000 & 0.006 & 0.154 & 0.464 & 0.000 & 0.000 & 0.000 & 0.003 & 0.086 \\
		1000 & 0.000 & 0.000 & 0.008 & 0.286 & 0.600 & 0.000 & 0.000 & 0.000 & 0.012 & 0.263 \\
		\bottomrule
	\end{tabular}
\end{table}

\subsection{Case Study}

We showcase the methodology with one medical application and one engineering application.

\subsubsection{Mean Arterial Pressure}

We illustrate the method on invasive mean arterial pressure (MAP) recordings from VitalDB\footnote{\url{https://physionet.org/content/vitaldb/1.0.0/}} \citep{lee2022}. MAP is the average arterial pressure over a cardiac cycle and is routinely used by anesthesiologists as a one-dimensional summary of organ perfusion during surgery. We use the track \texttt{Solar8000/ART\_MBP}, measured in mmHg, and analyze the first 250 VitalDB cases for which this variable is available. This corresponds to approximately half of the first 500 recordings. 

The clinical target is motivated by the consensus recommendations of the Anesthesia Patient Safety Foundation, which state that an exact MAP target is imprecise, but that MAP below 65 mmHg increases harm, MAP below 60 mmHg has a high probability of harm, and MAP above 80 mmHg appears to confer no advantage \citep{scott2024}. We therefore ask whether a case contains a 10-minute interval during which the underlying MAP trajectory remains in the range 65-80 mmHg. This is a stable-interval question rather than a global constancy question. Intraoperative MAP is expected to change over time, but the relevant issue is whether a clinically meaningful stable episode occurs. Formally, we set $d(t)=\mu(t)-72.5$ and $\Delta=7.5$, so that $|d(t)|<\Delta$ corresponds to $\mu(t)\in(65,80)$. For each recording we test $H_0: d_\infty\geq 7.5$ against $H_1:d_\infty<7.5$, where $\delta$ corresponds to 10 minutes. Thus rejection means that the data provide evidence for at least one 10-minute interval on which the smoothed MAP curve remains within the target range. This is exactly the infimum-over-windows formulation of the stable-interval functional, where the inner supremum measures the worst deviation in a candidate interval and the outer infimum selects the best interval.

Before applying the test, we used a deterministic artifact-removal procedure. First, changes larger than 20 mmHg between two consecutive observations at 0.5 Hz were marked as missing, corresponding to changes exceeding 10 mmHg/s, since such changes are biologically implausible \citep{khan2022}. Second, leading and trailing missing or implausible values were removed, values outside $[30,220]$ mmHg were treated as biologically implausible, and short feasible boundary runs of length at most 3\% of the recording were ignored when determining the analyzable segment \citep{khan2022}. Third, remaining short gaps were imputed by a moving average over 31 observations, corresponding to approximately one minute at 0.5 Hz. The same preprocessing rule was applied to all 250 recordings and was fixed before computing the test statistics.

For each preprocessed recording we computed the plug-in statistic $\hat d_{n,\infty}$ and rejected for large values of $T_n(7.5)$. The Gumbel-calibrated test, as defined by \eqref{eq:decision_rule}, rejected 51 of the 250 cases, corresponding to an empirical rejection rate of 20.4\%. The Gaussian-calibrated test, defined by \eqref{eq:gaussian_calibrated_rule}, rejected in 70 cases, corresponding to an empirical rejection rate of 28\%.

These rates are descriptive rather than epidemiological, because the cases were selected by availability of the invasive MAP track rather than by a clinical sampling design.

Figure \ref{fig:vitaldb} shows representative cases, including the preprocessed MAP, the target band 65-80 mmHg, and the first selected 10-minute interval when the null is rejected. The example demonstrates the intended interpretation of the method. It does not claim that the complete MAP trajectory is stable, but tests whether at least one scientifically specified stable interval exists within a non-stationary physiological time series. 

\begin{figure}[htb]
	\centering
	\includegraphics[width=\textwidth]{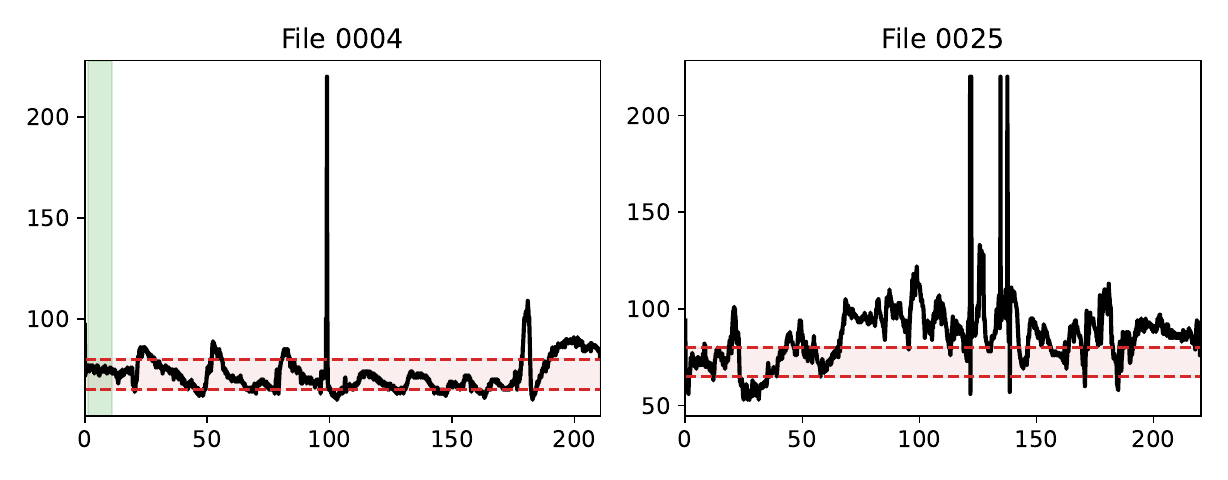}
	\caption{MAP curve over approx. two hours of two recordings. Left: The null hypothesis is rejected and the first stable interval marked in green. Right: The null hypothesis cannot be rejected and no stable interval exists.}
	\label{fig:vitaldb}
\end{figure}

\subsubsection{Battery Voltage}

As a second illustration, we analyze low-current open-circuit-voltage (OCV) discharge curves from the CALCE A123 battery data set.\footnote{\url{https://calce.umd.edu/battery-data\#A123}} The A123 experiment provides low-current OCV measurements at several ambient temperatures, and OCV-based information is a standard ingredient in state-of-charge estimation for battery management systems \citep{xing2014}. We use the discharge recordings at $0^\circ$C, $25^\circ$C, and $50^\circ$C for the two cells labeled \texttt{7} and \texttt{8}. Each recording lasts approximately 21 hours and is sampled at 0.2 Hz. The voltage curves are smooth, and we use local linear estimation with a bandwidth corresponding to 30 minutes.

The scientific question is whether the discharge curve contains a voltage plateau of a prescribed minimal duration. This is naturally formulated with the derivative signal $d(t) = \mu'(t)$, so that stability means local flatness of the underlying voltage curve. All derivatives are reported in physical units, V/h. We set the slope tolerance to $\Delta=0.01$ V/h, that is, 10 mV per hour, and vary the minimal plateau length $\delta$ between 2 and 8 hours. Thus, for each battery and temperature, we test $H_0: \inf_t \sup_{s\in[t,t+\delta]} |\mu'(s)| \geq 0.01$ against $H_1: \inf_t \sup_{s\in[t,t+\delta]} |\mu'(s)| < 0.01$. Rejection means that the voltage curve contains an interval of length at least $\delta$ on which the underlying voltage changes by less than 10 mV/h in absolute local slope. This is precisely the derivative-based plateau version of the stable-interval problem, where the method searches for the best interval rather than testing whether the full trajectory is constant.

The theory is formulated on the unit interval. If $m(u)$ denotes the voltage mean as a function of physical time $u\in[0,T]$, measured in hours, we apply the theory to the rescaled mean $\mu(t)=m(Tt)$, for $t\in[0,1]$. Thus a physical plateau length $\delta_{\rm h}$ corresponds to $\delta=\delta_{\rm h}/T$. For derivative-based stability, $\mu'(t)=T m'(Tt)$, so a physical slope tolerance $\Delta_{\rm h} = 0.01$ in V/h corresponds to the rescaled tolerance $\Delta=T\Delta_{\rm h}$ in the test. Reported derivatives in V/h are obtained by dividing the derivative on the rescaled time scale by $T$.

Figure \ref{fig:battery} shows the voltage curve and estimated derivative for battery \texttt{7}. The green interval marks the first interval satisfying the prescribed tolerance for $\delta=2$ h, while the blue interval marks the minimax-optimal stable interval. The distinction is useful in applications: $\hat\tau_\Delta$ identifies the first scientifically acceptable plateau, whereas $\hat\tau_*$ identifies the first interval with smallest estimated window-wise deviation.

\begin{figure}[t]
	\centering
	\includegraphics[width=\textwidth]{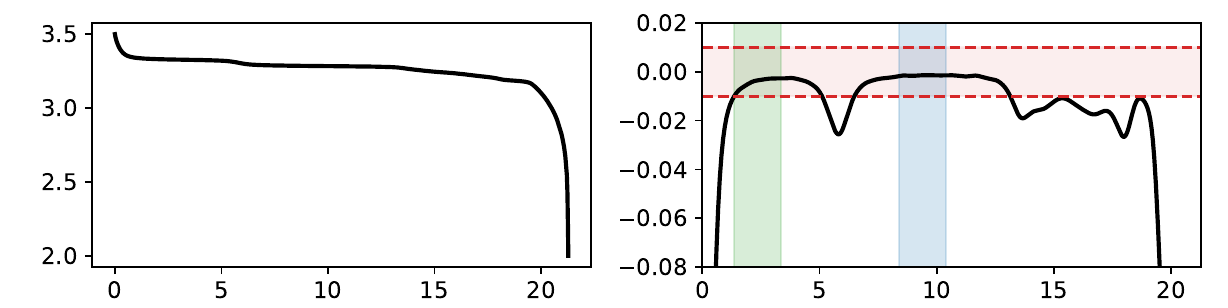}
	\caption{Left: Voltage (in V) of battery \texttt{7} over time (in h). Right: Estimated derivative of voltage (in V/h) over time (in h). The first stable interval for $\delta = 2$ h, defined by $\hat{\tau}_\Delta, \Delta = 0.01$ V/h, is marked in green, whereas the optimal stable interval for $\delta = 2$ h, defined by $\hat{\tau}_*$, is marked in blue.}
	\label{fig:battery}
\end{figure}

Table \ref{tab:battery_results} reports $p$-values for both the Gaussian and Gumbel calibrations. For $\delta=2,5,$ and $6$ hours, all battery-temperature combinations strongly reject the null, giving clear evidence for voltage plateaus of at least six hours. For $\delta=7$ hours, the conclusion becomes calibration- and temperature-dependent. At $0^\circ$C the Gumbel calibration still rejects for both cells, whereas the Gaussian calibration does not, and at $25^\circ$C and $50^\circ$C neither calibration rejects.  For $\delta=8$ hours, no test rejects. Thus the example illustrates the intended quantitative interpretation of the method. Instead of merely confirming visually that the discharge curve has a flat region, the procedure attaches a duration, a physical slope tolerance, and a calibrated uncertainty statement to the existence of a voltage plateau.

\begin{table}[thb]
	\caption{$p$-values for tests based on the voltage curve of batteries \texttt{7} and \texttt{8} for varying $\delta$.}
	\label{tab:battery_results}
	\begin{tabular}{l|rrrrr|rrrrr}
	\toprule
		& \multicolumn{5}{c|}{Gaussian calibration \eqref{eq:gaussian_calibrated_rule}} & \multicolumn{5}{c}{Gumbel calibration \eqref{eq:decision_rule}} \\
		$\delta$ (in h) & 2 & 5 & 6 & 7 & 8 & 2 & 5 & 6 & 7 & 8 \\
		\midrule
		\multicolumn{11}{l}{\textit{Panel A: Battery \texttt{7}}}\\
		0°C & 0.000 & 0.000 & 0.002 & 0.423 & 1.000 & 0.000 & 0.000 & 0.000 & 0.018 & 1.000 \\  
		25°C & 0.000 & 0.000 & 0.000 & 1.000 & 1.000 & 0.000 & 0.000 & 0.000 & 1.000 & 1.000 \\ 
		50°C & 0.000 & 0.000 & 0.000 & 1.000 & 1.000& 0.000 & 0.000 & 0.000 & 1.000 & 1.000 \\
		\midrule
		\multicolumn{11}{l}{\textit{Panel B: Battery \texttt{8}}}\\
		0°C & 0.001 & 0.001 & 0.001 & 0.240 & 1.000 & 0.001 & 0.001 & 0.001 & 0.002 & 1.000 \\ 
		25°C & 0.001 & 0.001 & 0.001 & 1.000 & 1.000 & 0.001 & 0.001 & 0.001 & 1.000 & 1.000 \\ 
		50°C & 0.001 & 0.001 & 0.001 & 1.000 & 1.000 & 0.001 & 0.001 & 0.001 & 1.000 & 1.000 \\
		\bottomrule
	\end{tabular}
\end{table}

%% file: sec_proofs.tex
 \section{Proofs} \label{sec:proof}

\subsection{Proofs of Section \ref{sec:localized_test}}

\begin{proposition} \label{prop:extremal_set}
	Let $(\phi_n)_{n\in\N}$ be a sequence converging to $0$. Under Assumption \ref{assump:mu} (1), it exists a sequence $(\delta_n)_{n\in\N}$ converging to $0$, such that 
	\begin{equation} \label{eq:extremal_set_inclusion1}
		U_{\phi_n/L_d}(\Ec) \subset \Ec(\phi_n) \subset U_{\delta_n}(\Ec),
	\end{equation}
	where $L_d$ denotes the Lipschitz constant of $d$. 	
	If additionally Assumption \ref{assump:mu} (2) or (3) is satisfied, we may select $\delta_n = (\phi_n/c)^{1/\alpha_{\max}}$, where $\alpha_{\max} = \max_{(x, \nu)} \alpha_{x, \nu}$ and $c$ is the constant in \eqref{eq:boundary_condition} or \eqref{eq:boundary_condition2}, respectively.
\end{proposition}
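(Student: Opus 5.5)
The plan is to prove the two inclusions in \eqref{eq:extremal_set_inclusion1} separately, using only that $d$ is Lipschitz with constant $L_d$. This follows from Assumption \ref{assump:mu} (1), since $\mu'$ and $\mu$ are Lipschitz, and $g(\mu,\cdot)$ is taken to be Lipschitz as well. After that, the boundary growth conditions give the explicit rate for $\delta_n$.

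\emph{Lower inclusion.} Take $t\in U_{\phi_n/L_d}(\Ec)$, so there is $x\in\Ec$ with $|t-x|<\phi_n/L_d$.
\begin{itemize}
\item[(i)] Since $x\in\Ec_1^\pm$, we have $|d_\infty\mp d(t)| = |d(x)-d(t)| \le L_d|t-x| \le \phi_n$, so $t\in\Ec_1(\phi_n)$.
\item[(ii)] Since $x\in\Ec_2$, we can write $x = s_0+u$ with $s_0\in\Ec_3$ and $u\in[0,\delta]$.
\item[(iii)] Shift the window: put $s = s_0+(t-x)$, clipped to $[0,1-\delta]$ if necessary. The interior condition in part (1) keeps minimizers away from the boundary, so for large $n$ no clipping occurs. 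Then $t\in s+[0,\delta]$.
\item[(iv)] The moving-window map $M(s)=\sup_{r\in[s,s+\delta]}|d(r)|$ is itself $L_d$-Lipschitz, because shifting the window by $\epsilon$ changes each $|d|$-value by at most $L_d\epsilon$. Hence $M(s)-d_\infty \le L_d|t-x|\le\phi_n$, so $s\in\Ec_3(\phi_n)$ and $t\in\Ec_2(\phi_n)$.
\end{itemize}
Together this gives $t\in\Ec(\phi_n)$.

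\emph{Upper inclusion by compactness.} Define $\delta_n' = \sup\{\dist(t,\Ec): t\in\Ec(\phi_n)\}$ and argue by contradiction that $\delta_n'\to 0$. If not, there are $\epsilon>0$ and points $t_n\in\Ec(\phi_n)$ along a subsequence with $\dist(t_n,\Ec)\ge\epsilon$. Along a further subsequence $t_n\to t^*$, and the corresponding window starts $s_n\in\Ec_3(\phi_n)$ with $t_n-s_n\in[0,\delta]$ satisfy $s_n\to s^*$.

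By continuity of $d$ and $M$:
\begin{itemize}
\item $|d_\infty\mp d(t^*)|=0$ for one sign, so $t^*\in\Ec_1$;
\item $M(s^*)\le d_\infty$, so $s^*\in\Ec_3$;
\item $t^*-s^*\in[0,\delta]$, so $t^*\in\Ec_2$.
\end{itemize}
Thus $t^*\in\Ec$, which contradicts $\dist(t^*,\Ec)\ge\epsilon$. We then set $\delta_n=2\delta_n'+1/n$ to obtain an open neighbourhood containing $\Ec(\phi_n)$.

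\emph{Polynomial rate.} Under (2) or (3), the goal is to show that any $t\in\Ec(\phi_n)$ lies within $(\phi_n/c)^{1/\alpha_{\max}}$ of $\Ec$.
\begin{itemize}
\item By the previous step, for large $n$ the point $t$ lies within $\gamma$ of $\Ec$. Since $\Ec$ has finitely many components, $t$ lies in some one-sided neighbourhood $U_\gamma^\nu(x)$ for an exterior boundary side $(x,\nu)\in(\partial\Ec)^{ext}$, or else $t\in\Ec$.
\item Under (2): $t\in\Ec_1(\phi_n)$ gives $|d_\infty-|d(t)||\le\phi_n$, which needs $d_\infty>0$ and $\phi_n$ small so that signs agree. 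Then \eqref{eq:boundary_condition} yields $c|t-x|^{\alpha_{x,\nu}}\le\phi_n$, so $|t-x|\le(\phi_n/c)^{1/\alpha_{x,\nu}}\le(\phi_n/c)^{1/\alpha_{\max}}$ once $\phi_n/c<1$.
\item Under (3): \eqref{eq:boundary_condition2} directly gives $|t-x|\le r_{x,\nu}(\phi_n)\le C\phi_n^{1/\alpha_{x,\nu}}$, which is of the same order; the constants are adjusted in the same way.
\end{itemize}

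The main obstacle I expect is the bookkeeping in the upper inclusion. Points of $\Ec(\phi_n)$ must be attributed to a single exterior boundary side. In particular, near a point of $\Ec$ that is interior from one side, the set $\Ec(\phi_n)$ on that side already lies inside $\Ec$. Making this precise requires choosing $\gamma$ smaller than half the minimal gap between components of $\Ec$.
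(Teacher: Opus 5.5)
Your proposal is correct and follows essentially the same route as the paper's proof: a Lipschitz shift of the optimal window for the lower inclusion, a compactness/contradiction argument for the upper inclusion, and the boundary growth conditions for the explicit rate. You are also more careful than the paper about clipping, openness of the neighbourhood, and attributing points to exterior boundary sides. One restriction is superfluous: case (2) does not need $d_\infty>0$, since the reverse triangle inequality gives $\bigl||d(t)|-d_\infty\bigr|\le\min\{|d_\infty-d(t)|,\,|d_\infty+d(t)|\}\le\phi_n$ for every $t\in\Ec_1(\phi_n)$ without any sign condition.
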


\begin{proof}
	To prove the first inclusion in \eqref{eq:extremal_set_inclusion1}, let $t \in U_{\phi_n/L_d}(\Ec)$. Then, there exists $x_t\in\Ec$ with $|t - x_t| < \phi_n/L_d$. Since $x_t\in\Ec\subset \Ec_1$, we have $|d(x_t)|=d_\infty$. By Lipschitz continuity of $d$, $d_\infty - |d(t)| \le L_d |x_t - t| \le \phi_n$, so that $t \in \Ec_1(\phi_n)$.
	Further, since $x_t\in\Ec\subset \Ec_2 =\Ec_3+[0,\delta]$, there exists $u_t\in\Ec_3$ and $r_t\in[0, \delta]$ such that $x_t = u_t + r_t$. Define $v_t = t - r_t \in[t-\delta, t]$, so that $|v_t - u_t| = | t - x_t| < \phi_n /L_d$. Because $d$ is Lipschitz with constant $L_d$, the function $u\mapsto \sup_{s\in[u,(u+\delta)\wedge 1]}|d(s)|$ is also Lipschitz with the same constant. Since $u_t\in\Ec_3$, $\sup_{s\in[u_t,u_t+\delta]}|d(s)|=d_\infty$. Hence, 
	\begin{equation*}
		\sup_{s\in[v_t,(v_t+\delta)\wedge 1]}|d(s)| - d_\infty 
		= \sup_{s\in[v_t,(v_t+\delta)\wedge 1]}|d(s)| - \sup_{s\in[u_t,(u_t+\delta)\wedge 1]}|d(s)|
		\le L_d |v_t - u_t| < \phi_n.
	\end{equation*}
	Thus, $v_t \in \Ec_3(\phi_n)$, and $t = v_t + r_t \in \Ec_3(\phi_n) + [0, \delta] = \Ec_2(\phi_n)$. Consequently, $t\in \Ec_1(\phi_n)\cap \Ec_2(\phi_n) =\Ec(\phi_n)$, and $U_{\phi_n/L_d}(\Ec)\subset \Ec(\phi_n)$.
	
	For the second inclusion, it suffices to show that $\sup_{t\in \Ec(\phi_n)} \dist(t,\Ec)\to 0$. Suppose this is false. Then there exist $\eps >0 $, a subsequence still indexed by $n$, and points $t_n\in \Ec(\phi_n)$ such that $\dist(t_n,\mathcal E)\ge \eps$ for all $n$. By compactness of $[0,1]$, after passing to a further subsequence, we may assume $t_n\to t^\ast\in[0,1]$.	Since $t_n\in \Ec(\phi_n)\subset \Ec_1(\phi_n)$, we have $\min\{|d_\infty-d(t_n)|, |d_\infty+d(t_n)|\}\le \phi_n$. As $\phi_n\to 0$ and $d$ is continuous, it follows that $t^\ast\in \Ec_1$.
	Moreover, $t_n\in \Ec(\phi_n)\subset \Ec_2(\phi_n)=\Ec_3(\phi_n)+[0,\delta]$, so there exists $u_n\in \Ec_3(\phi_n)$, such that $t_n\in [u_n,u_n+\delta]$. Passing to a further subsequence if necessary, we may assume $u_n\to u^\ast\in[0,1-\delta]$. Since $u_n\in \Ec_3(\phi_n)$, $\sup_{s\in[u_n,u_n+\delta]}|d(s)|\le d_\infty+\phi_n$.
	By continuity of $d$, the map $u\mapsto \sup_{s\in[u,u+\delta]}|d(s)|$ is continuous, hence $\sup_{s\in[u^\ast,u^\ast+\delta]}|d(s)|\le d_\infty$. By definition of $d_\infty$, the reverse inequality always holds, so equality follows and $u^\ast\in \Ec_3$. Since $t_n\in [u_n,u_n+\delta]$ and $(t_n,u_n)\to (t^\ast,u^\ast)$, we obtain $t^\ast\in [u^\ast,u^\ast+\delta]\subset \Ec_2$. Therefore $t^\ast\in \Ec_1\cap \Ec_2=\Ec$. This contradicts $\dist(t_n,\mathcal E)\ge \eps$ for all $n$. Hence $\sup_{t\in \Ec(\phi_n)} \dist(t, \Ec) \to 0$.
	
	For the second part, note that the two-sided bound in \eqref{eq:boundary_condition2} is more restrictive than the one-sided bound in \eqref{eq:boundary_condition}.	For all sufficiently large $n$, $(\phi_n/c)^{1/\alpha_{\max}} < \gamma$. Let $t\in\Ec(\phi_n)\setminus \Ec$, then $t \in \Ec(\phi_n) \cap U_\gamma^\nu(x)$, for some $(x, \nu)\in (\partial \Ec)^{ext}$. By \eqref{eq:boundary_condition} and since $t\in\Ec(\phi_n)$, $c |t - x|^{\alpha_{x,\nu}} \le d_\infty - |d(t)| \le \phi_n$, and therefore, $|t - x|\le(\phi_n/c)^{1/\alpha_{x,\nu}} \le (\phi_n/c)^{1/\alpha_{\max}}$. Hence, $t \in U_{(\phi_n/c)^{1/\alpha_{\max}}}(\Ec)$.
\end{proof}

\begin{proposition} \label{prop:extremal_set2}
	Let $(\phi_n)_{n\in\N}$ and $(e_n)_{n\in\N}$ be positive sequences converging to $0$, Assumptions \ref{assump:mu} (1) and (2) be satisfied and define  $\alpha_{\max} = \max_{(x, \nu)} \alpha_{x, \nu}$. For some constant $C > 0$,
	\begin{equation*}
		\lambda\big(\Ec(\phi_n(1+e_n)) \setminus \Ec(\phi_n(1-e_n))\big) \le C \phi_n^{1/\alpha_{\max}}. 
	\end{equation*}
	If Assumptions \ref{assump:mu} (1) and (3) are satisfied, the stronger bound
	\begin{equation*}
		\lambda\big(\Ec(\phi_n(1+e_n)) \setminus \Ec(\phi_n(1-e_n))\big) \le
		C e_n \phi_n^{1/\alpha_{\max}}
	\end{equation*}
	holds. The same bounds apply to the Hausdorff distance $d_H[\Ec(\phi_n(1+e_n)), \Ec(\phi_n(1-e_n))]$.
\end{proposition}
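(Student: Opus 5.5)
The shell $\Ec(\phi_n(1+e_n))\setminus\Ec(\phi_n(1-e_n))$ is contained in $\Ec(\phi_n(1+e_n))\setminus\Ec$. Proposition \ref{prop:extremal_set} localizes this set near the exterior boundary of $\Ec$. Since $\Ec$ is a finite union of closed intervals and isolated points, $(\partial\Ec)^{ext}$ is finite; write $N$ for its cardinality. It therefore suffices to bound, for each exterior side $(x,\nu)$, the Lebesgue measure of the shell intersected with $U_\gamma^\nu(x)$, and then sum over the $N$ sides.

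\emph{Step 1: the part of the shell away from $\Ec$ lies in the union of $U_\gamma^\nu(x)$, $(x,\nu)\in(\partial\Ec)^{ext}$.} By the second inclusion in \eqref{eq:extremal_set_inclusion1}, for large $n$,
\[
\Ec(\phi_n(1+e_n))\subset U_{\delta_n}(\Ec), \qquad \delta_n=(2\phi_n/c)^{1/\alpha_{\max}}\to0 .
\]
A point of $U_{\delta_n}(\Ec)\setminus\Ec$ lies within distance $\delta_n<\gamma$ of some boundary point $x$, on a side $\nu$ for which $U_\eta^\nu(x)\cap\Ec=\emptyset$ for small $\eta$. Hence that side is exterior. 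Take $\gamma$ smaller than half the minimal gap between components, so that the neighborhoods $U_\gamma^\nu(x)$ of different sides do not interfere.

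\emph{Step 2: bound under Assumption \ref{assump:mu} (2).} Fix an exterior side and a point $t\in U_\gamma^\nu(x)\cap\Ec(\phi_n(1+e_n))$. Then \eqref{eq:boundary_condition} gives
\[
c|t-x|^{\alpha_{x,\nu}}\le \big|d_\infty-|d(t)|\big|\le 2\phi_n ,
\]
so $|t-x|\le(2\phi_n/c)^{1/\alpha_{x,\nu}}$. Since $\phi_n<1$ eventually, this is at most $C'\phi_n^{1/\alpha_{\max}}$. Each side's piece is thus an interval of length $\lesssim\phi_n^{1/\alpha_{\max}}$, and summing over the $N$ sides gives the first bound. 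The Hausdorff statement follows from the same one-sided distance bound: every point of the larger set lies within $C'\phi_n^{1/\alpha_{\max}}$ of $\Ec$, and $\Ec$ is contained in the smaller set.

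\emph{Step 3: sharper bound under Assumption \ref{assump:mu} (3).} By \eqref{eq:boundary_condition2}, within $U_\gamma^\nu(x)$ the sets are exactly the one-sided intervals of radius $r_{x,\nu}(\phi)$. The shell piece is therefore the half-open interval between radii $r_{x,\nu}(\phi_n(1-e_n))$ and $r_{x,\nu}(\phi_n(1+e_n))$, with length equal to their difference. By the mean value theorem (valid since $r_{x,\nu}$ is $C^1$ on $(0,\phi_0]$ and $\phi_n(1+e_n)\le\phi_0$ eventually), this length equals
\[
2e_n\phi_n\, r_{x,\nu}'(\xi)\le 2C e_n\phi_n\,\xi^{1/\alpha_{x,\nu}-1}, \qquad \xi\in[\phi_n(1-e_n),\phi_n(1+e_n)] .
\]
Because $\xi\asymp\phi_n$ (e.g. for $e_n\le1/2$), this is $\lesssim e_n\phi_n^{1/\alpha_{x,\nu}}\le e_n\phi_n^{1/\alpha_{\max}}$. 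Summing over sides gives the claim. Each shell piece is a single interval adjacent to $x$, so the Hausdorff distance between the two sets equals the largest such length and obeys the same bound.

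\emph{Main obstacle.} The delicate point is Step 1: showing that, for large $n$, the shell contains nothing outside the union of the $U_\gamma^\nu(x)$. Near an interior point of $\Ec$ the shell is empty, but one must ensure that no region far from $\Ec$ enters. This is exactly what the consistency part $\sup_{t\in\Ec(\phi)}\dist(t,\Ec)\to0$ of Proposition \ref{prop:extremal_set} provides. A further care point is an isolated point of $\Ec$, which has both sides exterior; each side is simply handled separately.
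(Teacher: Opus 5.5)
Your proposal is correct and follows essentially the paper's argument. Under Assumption \ref{assump:mu} (2) you localize the shell to the exterior one-sided neighborhoods and apply the boundary growth condition, where the paper simply cites Proposition \ref{prop:extremal_set}. Under Assumption \ref{assump:mu} (3) you bound $r_{x,\nu}(\phi_n(1+e_n))-r_{x,\nu}(\phi_n(1-e_n))$ via the derivative bound on $r_{x,\nu}$, as the paper does by integrating $r_{x,\nu}'$. Your nearest-point argument in Step 1 also makes explicit the reduction to exterior sides, which the paper leaves implicit in the proof of Proposition \ref{prop:extremal_set}.
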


\begin{proof}
	First consider that Assumptions \ref{assump:mu} (1) and (2) are satisfied. By Proposition \ref{prop:extremal_set}, $\Ec(\phi_n(1+e_n)) \subset U_{(\phi_n(1+e_n)/c)^{1/\alpha_{\max}}}(\Ec)$, hence, 
	\begin{equation*}
		\lambda\big[\Ec(\phi_n(1+e_n)) \setminus \Ec(\phi_n(1-e_n))\big]
		\le \lambda\big[U_{(\phi_n/c)^{1/\alpha_{\max}}}(\Ec) \setminus \Ec\big]
		\le 2 L \bigg(\frac{\phi_n(1+e_n)}{c}\bigg)^{1/\alpha_{\max}},
	\end{equation*}
	which can be bounded from above by $C \phi_n^{1/\alpha_{\max}}$, for a suitable constant $C > 0$. The Hausdorff bound $d_H[\Ec(\phi_n(1+e_n)), \Ec(\phi_n(1-e_n))] \le C \phi_n^{1/\alpha_{\max}}$ follows immediately since $\Ec \subset \Ec(\phi_n(1-e_n))$ and $\Ec(\phi_n(1+e_n)) \setminus \Ec(\phi_n(1-e_n)) \subset U_{(\phi_n/c)^{1/\alpha_{\max}}}(\Ec) \setminus \Ec$.
	
	Now, let Assumptions \ref{assump:mu} (1) and (3) hold. For $n$ sufficiently large, $\phi_n (1+e_n) \le \phi_0$, such that
	\begin{align*}
		\lambda\big(\Ec(\phi_n(1+e_n)) \setminus \Ec(\phi_n(1-e_n))\big)
		& \le \sum_{(x, \nu) \in (\partial \Ec)^{ext}} r_{x, \nu}\big(\phi_n(1+e_n)\big) - r_{x, \nu}\big(\phi_n(1-e_n)\big) \\
		& = \sum_{(x, \nu) \in (\partial \Ec)^{ext}} \int_{\phi_n(1-e_n)}^{\phi_n(1+e_n)}r_{x, \nu}'(z) \diff z \\
		& \le 2 L C_{\max} \int_{\phi_n(1-e_n)}^{\phi_n(1+e_n)} z^{1/\alpha_{\max} - 1} \diff z \\
		& = 2 L C_{\max} \alpha_{\max}   \phi_n^{1/\alpha_{\max}} [(1+e_n)^{1/\alpha_{\max}} - (1-e_n)^{1/\alpha_{\max}}]
	\end{align*}
	where $C_{\max} = \max_{(x, \nu)} C_{x, \nu}$. By the mean value theorem, the right-hand side can be bounded from above by $C e_n \phi_n^{1/\alpha_{\max}}$.
	
	The Hausdorff bound follows similarly. More specifically, $\Ec(\phi_n(1\pm e_n)) \cap U_\gamma^\nu(x) = U_{r_{x,\nu}(\phi_n(1\pm e_n))}^\nu(x)$, by assumption. Therefore, any point $t\in \Ec(\phi_n(1+e_n)) \setminus \Ec(\phi_n(1-e_n))$ lies in one of the one-sided sets $U_{r_{x,\nu}(\phi_n(1+ e_n))}^\nu(x) \setminus U_{r_{x,\nu}(\phi_n(1- e_n))}^\nu(x)$, whose width is
	\begin{align*}
		|r_{x,\nu}(\phi_n(1+ e_n)) - r_{x,\nu}(\phi_n(1- e_n))|
		= \int_{\phi_n(1-e_n)}^{\phi_n(1+e_n)} r_{x,\nu}'(z) \diff z,
	\end{align*}
	which is of order $\Oc(e_n \phi_n^{1/\alpha_{\max}})$ by the same arguments as before. In particular, $d_H[\Ec(\phi_n(1+e_n)), \Ec(\phi_n(1-e_n))] \le C e_n \phi_n^{1/\alpha_{\max}}$.
\end{proof}

\begin{proposition} \label{prop:sigma_max_plugin}
	Let $(\phi_n)_{n\in\N}$ and $(e_n)_{n\in\N}$ be positive sequences converging to $0$ and $\ell_n = o(n^{1/7})$. Further, let Assumptions \ref{assump:mu} (1) and (2) be satisfied with $\phi_n^{1/\alpha_{\max}} = o(\ell_n^{-2})$, or Assumptions \ref{assump:mu} (1) and (3) be satisfied with $e_n \phi_n^{1/\alpha_{\max}} = o(\ell_n^{-2})$. Then,
	\begin{equation*}
		\sup_{t\in \hat{\Ec}(\phi_n)} \hat{\sigma}_n(t) = \sup_{t\in \Ec(\phi_n)} \sigma(t) (1 + o_\pr(\ell_n^{-2})).
	\end{equation*}
	Equivalently, since $a_n / \ell_n \to 1$,
	\begin{equation*}
		\sup_{t\in \hat{\Ec}(\phi_n)} \hat{\sigma}_n(t) = \sup_{t\in \Ec(\phi_n)} \sigma(t) (1 + o_\pr(a_n^{-2})).
	\end{equation*}
\end{proposition}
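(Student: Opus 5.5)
The plan is a sandwich argument that splits the error into two parts. The first part comes from replacing $\sigma$ by $\hat\sigma_n$. The second comes from replacing $\Ec(\phi_n)$ by $\hat\Ec(\phi_n)$.

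We work on the event of Proposition \ref{prop:E_sandwich}, with $e_n$ chosen as in the statement. On this event, which has probability tending to one,
\begin{equation*}
	\Ec\big(\phi_n(1-e_n)\big) \subset \hat{\Ec}(\phi_n) \subset \Ec\big(\phi_n(1+e_n)\big).
\end{equation*}
If the rate condition of Proposition \ref{prop:E_sandwich} is not automatically satisfied by the given $e_n$, we enlarge $e_n$ slightly so that it still tends to zero. In regime (2) the bound used below does not involve $e_n$ at all, so enlarging it costs nothing there. In regime (3) we rely on the stated condition $e_n\phi_n^{1/\alpha_{\max}}=o(\ell_n^{-2})$ for the admissible $e_n$.

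\emph{Step 1 (variance estimation).} By \eqref{eq:rate_sigma} and because $\sigma^2$ is bounded away from zero, we have $\sup_t|\hat\sigma_n(t)-\sigma(t)|=\Oc_\pr(n^{-2/7})$. By Proposition \ref{prop:extremal_set}, $\Ec(\phi_n(1+e_n))\subset[\gamma_n,1-\gamma_n]$ for large $n$, so this uniform bound applies on every set we use. Since $\sup_{\Bc}\hat\sigma_n$ and $\sup_{\Bc}\sigma$ differ by at most the uniform error, and $\ell_n=o(n^{1/7})$ gives $n^{-2/7}=o(\ell_n^{-2})$, we get
\begin{equation*}
	\sup_{\hat\Ec(\phi_n)}\hat\sigma_n=\sup_{\hat\Ec(\phi_n)}\sigma\,\big(1+o_\pr(\ell_n^{-2})\big).
\end{equation*}

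\emph{Step 2 (set replacement).} Monotonicity of the supremum under the sandwich gives
\begin{equation*}
	\sup_{\Ec(\phi_n(1-e_n))}\sigma \le \sup_{\hat\Ec(\phi_n)}\sigma \le \sup_{\Ec(\phi_n(1+e_n))}\sigma.
\end{equation*}
Both outer quantities are compared with $\sup_{\Ec(\phi_n)}\sigma$. All three sets lie between $\Ec(\phi_n(1-e_n))$ and $\Ec(\phi_n(1+e_n))$, so it suffices to control the difference of the suprema over the inner and outer sets. Take any point $t$ in the outer set. By the Hausdorff bound of Proposition \ref{prop:extremal_set2}, there is a point $t'$ in the inner set with $|t-t'|\le d_H$, where $d_H$ is $\Oc(\phi_n^{1/\alpha_{\max}})$ under (2) and $\Oc(e_n\phi_n^{1/\alpha_{\max}})$ under (3). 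The Lipschitz continuity of $\sigma$ (Assumption \ref{assump:error}) then gives $\sigma(t)\le\sigma(t')+L_\sigma d_H$. Dividing by $\sigma_{\max}$, which is bounded below, the relative error is $\Oc(d_H)$, and by the respective hypotheses this is $o(\ell_n^{-2})$.

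Combining the two steps yields the first display of the proposition. The second display follows because $a_n/\ell_n\to1$; this holds since $\log W_n^{(c)}/\ell_n^2\to0$, as $W_n$ is at most the number of blocks, which is of order $\rho_n^{-1}$, and $\log\rho_n^{-1}=o(\ell_n^2)$ under Assumption \ref{assump:sequences}.

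\emph{Main obstacle.} The delicate point is the Hausdorff step in regime (2). There the bound holds only because every point of the outer set lies within $(\phi_n/c)^{1/\alpha_{\max}}$ of $\Ec$, and $\Ec$ is contained in the inner set. This is the one-sided argument from the proof of Proposition \ref{prop:extremal_set2}, and I would verify it carefully, including at isolated points of $\Ec$.
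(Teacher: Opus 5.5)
Your Steps 1 and 2 are the paper's proof: on the event of Proposition \ref{prop:E_sandwich} you sandwich $\sup_{\hat{\Ec}(\phi_n)}\sigma$ between the suprema over $\Ec(\phi_n(1\mp e_n))$. You then control the gap by the Lipschitz constant of $\sigma$ times the Hausdorff bound of Proposition \ref{prop:extremal_set2}, and replace $\sigma$ by $\hat\sigma_n$ using \eqref{eq:rate_sigma} together with $n^{-2/7}=o(\ell_n^{-2})$. You also check explicitly that $\Ec(\phi_n(1+e_n))\subset[\gamma_n,1-\gamma_n]$ and that $e_n$ must be admissible for Proposition \ref{prop:E_sandwich}; the paper leaves both implicit. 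The first display is therefore fine.

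The step that fails is your justification of $a_n/\ell_n\to1$. Solving the defining equation of $W_n^{(c)}$ gives $\log W_n^{(c)}=\ell_n\sqrt{\ell_n^2+2\log W_n}-\ell_n^2$, hence
\begin{equation*}
a_n^2=\ell_n^2+2\log W_n ,
\end{equation*}
so $a_n/\ell_n\to1$ holds if and only if $\log W_n=o(\ell_n^2)$. Assumption \ref{assump:sequences} does not give this. The condition $h_n\le\rho_n^{1+\eta}$ only yields $\ell_n^2\ge 2\eta\log\rho_n^{-1}+\Oc(1)$, i.e.\ $\log\rho_n^{-1}=\Oc(\ell_n^2)$, not $o(\ell_n^2)$.

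In fact the limit is false in general. Take polynomial rates $\rho_n=n^{-p}$, $h_n=n^{-q}$ with $0<p<q$; the paper's own example $h_n\sim n^{-1/3-\alpha}$, $\rho_n=(nh_n)^{-1/2}$ is of this type. If also $W_n\asymp\rho_n^{-1}$, as under Assumption \ref{assump:regularity_sigma} (1) by Proposition \ref{prop:sequences} (2), then $a_n/\ell_n\to\sqrt{q/(q-p)}>1$. The paper's proof asserts the same limit ``from the definition of $a_n$'' without argument.

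The second display only needs $a_n=\Oc(\ell_n)$. Indeed, if $x_n\ell_n^2\to0$, then $x_na_n^2=x_n\ell_n^2\,(a_n/\ell_n)^2\to0$ as soon as the ratio is bounded. Boundedness follows from $W_n\le\rho_n^{-1}$ (Proposition \ref{prop:sequences} (1)) together with $h_n\le\rho_n^{1+\eta}$, which give $a_n^2/\ell_n^2\le 1+1/\eta+o(1)$. Replace your limit claim by this boundedness argument and the proof is complete.
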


\begin{proof}
	Let $C_\sigma$ denote the Lipschitz constant of $\sigma$. 
	On the event $\Ec\big(\phi_n (1 - e_n)\big) \subset \hat{\Ec}(\phi_n) \subset 	\Ec\big(\phi_n (1 + e_n)\big)$,
	\begin{equation*}
		\sup_{t\in \Ec(\phi_n (1 - e_n))} \sigma(t) \le \sup_{t\in \hat{\Ec}(\phi_n)} \sigma(t) \le \sup_{t\in \Ec(\phi_n (1 + e_n))} \sigma(t).
	\end{equation*}
	Lipschitz continuity gives
	\begin{equation*}
		\Big|\sup_{t\in \Ec(\phi_n (1 + e_n))} \sigma(t)-\sup_{t\in \Ec(\phi_n (1 - e_n))}\sigma(t)\Big| \le C_\sigma d_H[\Ec(\phi_n(1+e_n)), \Ec(\phi_n(1-e_n))].
	\end{equation*}
	The right-hand side is of order $o(\ell_n^{-2})$ by Proposition \ref{prop:extremal_set2} and the assumptions. In particular,
	\begin{equation*}
		\Big|\sup_{t\in \hat{\Ec}(\phi_n)} \sigma(t) - \sup_{t\in \Ec(\phi_n)} \sigma(t)\Big| = o_\pr(\ell_n^{-2}).
	\end{equation*}
	By \eqref{eq:rate_sigma} and $\ell_n = o(n^{1/7})$, $\sup_{t\in [\gamma_n, 1-\gamma_n]} |\sigma^2(t) - \hat{\sigma}_n^2(t)| = o_\pr(\ell_n^{-2})$, such that
	\begin{equation*}
		\Big|\sup_{t\in \hat{\Ec}(\phi_n)} \hat{\sigma}_n(t) - \sup_{t\in \hat{\Ec}(\phi_n)} \sigma(t)\Big| 
		\le \sup_{t\in [\gamma_n, 1-\gamma_n]} \frac{|\sigma^2(t) - \hat{\sigma}_n^2(t)|}{|\sigma(t) + \hat{\sigma}_n(t)|} = o_\pr(\ell_n^{-2}),
	\end{equation*}
	by the triangle inequality. Therefore,
	\begin{equation*}
		\Big| \sup_{t\in \hat{\Ec}(\phi_n)} \hat{\sigma}_n(t)  - \sup_{t\in \Ec(\phi_n)} \sigma(t) \Big| = o_\pr(\ell_n^{-2})
	\end{equation*}
	Since $\sigma$ is bounded away from zero, 
	\begin{equation*}
		\frac{\sup_{t\in \hat{\Ec}(\phi_n)} \hat{\sigma}_n(t)}{\sup_{t\in \Ec(\phi_n)} \sigma(t)} = 1 + o_\pr(\ell_n^{-2}).
	\end{equation*}
	Finally, the definition of $a_n$ implies $a_n/\ell_n \to 1$, so $\ell_n^{-2}$ may be replaced by $a_n^{-2}$. 
\end{proof}

\begin{proposition}	\label{prop:W_plugin}
	Let $(\phi_n)_{n\in\N}$ and $(e_n)_{n\in\N}$ be positive sequences converging to $0$. Let $(h_n)_{n\in\N}$ and $(\rho_n)_{n\in\N}$ satisfy Assumption \ref{assump:sequences}. Further, let Assumptions \ref{assump:mu} (1) and (2) be satisfied with $\phi_n^{1/\alpha_{\max}} = o(\ell_n^{-2})$, or Assumptions \ref{assump:mu} (1) and (3) be satisfied with $e_n \phi_n^{1/\alpha_{\max}} = o(\ell_n^{-2})$.  Finally, let Assumptions \ref{assump:error} and \ref{assump:regularity_sigma} (1) or (2) be satisfied. 
	Then
	\begin{equation*}
		\log \widehat W_n(\hat \Ec(\phi_n)) = \log W_n(\Ec(\phi_n)) + o_\pr(1),
	\end{equation*}
	and, consequently,
	\begin{equation*}
		\hat a_n(\hat \Ec(\phi_n))-a_n(\Ec(\phi_n)) = o_\pr(\ell_n^{-1}),
		\qquad
		\frac{\hat a_n(\hat \Ec(\phi_n))}{a_n(\Ec(\phi_n))} = 1 + o_\pr(\ell_n^{-2}).
	\end{equation*}
\end{proposition}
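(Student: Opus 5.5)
We work on the event of Proposition \ref{prop:E_sandwich}, on which $\Ec(\phi_n(1-e_n)) \subset \hat\Ec(\phi_n) \subset \Ec(\phi_n(1+e_n))$. On the same event, by Proposition \ref{prop:sigma_max_plugin},
\begin{equation*}
	\hat\sigma_{\max} = \sigma_{\max}\big(1+o_\pr(\ell_n^{-2})\big),
\end{equation*}
together with the uniform bound $\sup_t|\hat\sigma_n^2(t)-\sigma^2(t)| = \Oc_\pr(n^{-2/7})$ from \eqref{eq:rate_sigma}.

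The plan is to write each summand of $W_n$ as $\exp(-\tfrac{\ell_n^2}{2}[\sigma_{\max}^2/\sigma^2(\xi)-1])$. We compare $\widehat W_n(\hat\Ec(\phi_n))$ with $W_n(\Ec(\phi_n))$ in two steps. First we replace $\hat\sigma$ by $\sigma$ with the set held fixed. Then we replace $\hat\Ec(\phi_n)$ by $\Ec(\phi_n)$ with $\sigma$ held fixed.

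\textbf{Step 1 (variance replacement).} The exponents differ by
\begin{equation*}
	\tfrac{\ell_n^2}{2}\big|\hat\sigma_{\max}^2/\hat\sigma_n^2(\xi)-\sigma_{\max}^2/\sigma^2(\xi)\big|.
\end{equation*}
Since $\sigma$ is bounded away from zero and the errors of $\hat\sigma_{\max}$ and $\hat\sigma_n$ are $o_\pr(\ell_n^{-2})$ uniformly, this difference is $o_\pr(1)$ uniformly in the blocks. This needs $\ell_n^2 n^{-2/7}\to0$, which holds because $\ell_n$ is logarithmic. Each summand is therefore multiplied by $e^{o_\pr(1)}$ uniformly, so $\log\widehat W_n(\mathcal B)-\log W_n(\mathcal B)=o_\pr(1)$ for $\mathcal B\in\{\hat\Ec(\phi_n),\Ec(\phi_n(1\pm e_n))\}$.

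\textbf{Step 2 (set replacement).} The effective sample size is not monotone in the set, because $\sigma_{\max}$ changes with it. We therefore sandwich instead. Write $W_n^\pm$ for the effective sample sizes of $\Ec(\phi_n(1\pm e_n))$. The set enters in two ways: through the number and positions of the blocks, and through $\sigma_{\max}(\cdot)$.

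For $\sigma_{\max}(\cdot)$, Proposition \ref{prop:extremal_set2} and the Lipschitz continuity of $\sigma$ show that the three maximal variances agree up to $o(\ell_n^{-2})$. After multiplying by $\ell_n^2$, this changes each exponent by $o(1)$.

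For the blocks, the sets differ by shells of total length $o(\ell_n^{-2})$. Under Assumption \ref{assump:regularity_sigma}(2), $\ell_n^{-2}\le\theta_n$, so the relative change in the number of blocks is $o(1)$. Shifting a block's starting point by at most $o(\ell_n^{-2})$ changes $\sigma^2(\xi)$ by $o(\ell_n^{-2})$, which again changes the exponent by $o(1)$. Each block of the larger set then corresponds to a block of the smaller set, up to a fraction $o(1)$ of extra blocks whose weights are at most $1$. Hence $W_n^+/W_n^-\to1$ in log scale.

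It remains to show that the extra blocks are negligible relative to $W_n$ itself. This is where Assumption \ref{assump:regularity_sigma} enters. Under part (1), $\sigma_{\max}$ is attained on a nondegenerate interval $I$, so $W_n\gtrsim\lambda(I)/\rho_n$ from blocks with weight close to one. The extra blocks number at most $o(\ell_n^{-2})/\rho_n+O(1)$, which is $o(W_n)$. Under part (2), $\theta_n \ge \ell_n^{-2}$ ensures at least $\theta_n/\rho_n\to\infty$ blocks near any maximizer. The condition $\log|\log h_n|\,|\log\rho_n|/|\log h_n|\to0$ guarantees that blocks within $O(\ell_n^{-1})$ of the argmax of $\sigma$ have weights bounded below. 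Consequently $W_n$ dominates the $O(1)$ boundary blocks and the $o(\ell_n^{-2}/\rho_n)$ shell blocks. We expect this lower-bound argument for $W_n$ in the isolated-maximizer case to be the main obstacle.

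\textbf{Step 3 (centering).} Combining Steps 1 and 2 gives
\begin{equation*}
	\log\widehat W_n(\hat\Ec(\phi_n))=\log W_n(\Ec(\phi_n))+o_\pr(1).
\end{equation*}
The map $\log W\mapsto\log W^{(c)}$ defined by $x+x^2/(2\ell_n^2)=\log W$ has derivative $(1+x/\ell_n^2)^{-1}\le1$ for $x \ge 0$. Since $\log W_n=O(\ell_n^2)$, this map is Lipschitz, so $\log\widehat W_n^{(c)}-\log W_n^{(c)}=o_\pr(1)$. It follows that
\begin{equation*}
	\hat a_n-a_n=\frac{o_\pr(1)}{\ell_n}=o_\pr(\ell_n^{-1}).
\end{equation*}
Dividing by $a_n\sim\ell_n$ gives $\hat a_n/a_n=1+o_\pr(\ell_n^{-2})$.
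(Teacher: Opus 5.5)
Your route is the paper's. You first remove the variance estimation with the set held fixed, which is Lemma \ref{lem:wn_estimation}. You then compare $W_n(\hat\Ec(\phi_n))$ with $W_n(\Ec(\phi_n))$ on the sandwich event: $\sigma_{\max}$ moves by $o(\ell_n^{-2})$, at most $C\{\lambda(\text{shell})/\rho_n+1\}$ blocks are added or removed, and you divide by a lower bound on $W_n$. Your use of Lipschitz continuity of $\sigma$ to handle the shifted block grid is slightly more careful than the paper's. You also do not need the $W_n^\pm$ sandwich, since the argument applies directly to any two sets lying between $\Ec(\phi_n(1-e_n))$ and $\Ec(\phi_n(1+e_n))$.

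The step you flagged as the main obstacle is justified incorrectly, though the conclusion survives. Under Assumption \ref{assump:regularity_sigma} (2) you claim that blocks within $O(\ell_n^{-1})$ of a maximiser of $\sigma$ have weights bounded below. You attribute this to $\log|\log h_n|\,|\log\rho_n|/|\log h_n|\to0$. Only Lipschitz continuity of $\sigma$ is assumed, so at distance $\ell_n^{-1}$ one may have $\sigma_{\max}^2-\sigma^2(\xi)\asymp\ell_n^{-1}$, for example when $\sigma$ has a kink at its maximiser. The weight is then $\exp(-c\,\ell_n)\to0$. The logarithmic condition plays no role here; it only enters part 4 of Proposition \ref{prop:sequences}.

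The weights are bounded below only on a neighbourhood of radius $O(\ell_n^{-2})$. The condition $\ell_n^{-2}\le\theta_n$ is exactly what guarantees that a one-sided neighbourhood of that order lies in the component of $\Ec(\phi_n)$ containing the maximiser. Together with $\rho_n\ell_n^2\to0$, this gives $W_n(\Ec(\phi_n))\ge c\,\ell_n^{-2}\rho_n^{-1}$, which is Proposition \ref{prop:sequences} (3), the bound the paper invokes. Your count of $\theta_n/\rho_n$ blocks near the maximiser cannot replace this, because most of those blocks can have vanishing weight.

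This weaker lower bound still suffices. The shell contributes $o(\ell_n^{-2})/\rho_n+O(1)$ blocks, each of weight at most one. Dividing by $c\,\ell_n^{-2}\rho_n^{-1}$ gives $o(1)+O(\rho_n\ell_n^2)=o(1)$ by Assumption \ref{assump:sequences}. With this replacement, the rest of your argument, including the centering step, goes through.
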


\begin{proof} 
	Write
	\begin{align*}
		\log \widehat W_n(\hat \Ec(\phi_n))-\log W_n(\Ec(\phi_n))
		= & \{\log \widehat W_n(\hat \Ec(\phi_n))-\log W_n(\hat \Ec(\phi_n))\} \\
		& + \{\log W_n(\hat \Ec(\phi_n))-\log W_n(\Ec(\phi_n))\}.
	\end{align*}
	By Assumption \ref{assump:mu} (2) or (3) and Proposition \ref{prop:extremal_set}, $\hat{\Ec}(\phi_n)$ satisfies Assumption \ref{assump:sets}, so that 
	\begin{equation} \label{eq:Wn_stochastic_reduction}
		\log \widehat W_n(\hat \Ec(\phi_n)) - \log W_n(\hat \Ec(\phi_n)) = o_\pr(1)
	\end{equation}
	by Lemma \ref{lem:wn_estimation}.
	It remains to compare $W_n(\hat \Ec(\phi_n))$ and $W_n(\Ec(\phi_n))$. 
	On the event $\Ec(\phi_n(1-e_n))\subset \hat \Ec(\phi_n)\subset \Ec(\phi_n(1+e_n))$, the two sets $\Ec(\phi_n)$ and $\hat \Ec(\phi_n)$ differ only inside the deterministic shell $\Ec(\phi_n(1+e_n))\setminus \Ec(\phi_n(1-e_n))$. The number of block representatives that can be added or removed by this replacement is bounded by
	\begin{equation} \label{eq:bound_summands}
		C \left(\frac{\lambda(\Ec(\phi_n(1+e_n))\setminus \Ec(\phi_n(1-e_n)))}{\rho_n} + 1\right),
	\end{equation}
	for some constant $C >0$ depending only on the finite-union complexity of the sets. 
	Each summand in $W_n(\Ec(\phi_n))$ is bounded by one, so the contribution of added or removed blocks is at most the previous display.
	Next, Proposition \ref{prop:sigma_max_plugin} and Lipschitz continuity of $\sigma$ imply
	\begin{equation} \label{eq:rate_sigma2}
		\ell_n^2 | \sup_{t\in \hat{\Ec}(\phi_n)} \sigma^2(t) - \sup_{t\in \Ec(\phi_n)} \sigma^2(t) | = o_\pr(1)
	\end{equation}
	on the same event. 
	Changing $\sup_{t\in \Ec(\phi_n)} \sigma^2(t)$ to $\sup_{t\in \hat{\Ec}(\phi_n)} \sigma^2(t)$ in the exponential weights therefore multiplies $W_n(\Ec(\phi_n))$ by $1 + o(1)$, uniformly over all block representatives.
	Combining \eqref{eq:bound_summands} and \eqref{eq:rate_sigma2}, 
	\begin{equation*}
		|W_n(\hat \Ec(\phi_n))-W_n(\Ec(\phi_n))| \le o(1)W_n(\Ec(\phi_n)) + C \frac{\lambda(\Ec(\phi_n(1+e_n))\setminus \Ec(\phi_n(1-e_n))) + \rho_n}{\rho_n}.
	\end{equation*}
	By Proposition \ref{prop:sequences}, $\ell_n^{-2} \rho_n^{-1} \le W_n(\Ec(\phi_n))$, such that 
	\begin{equation*}
		\frac{\lambda(\Ec(\phi_n(1+e_n))\setminus \Ec(\phi_n(1-e_n)))+\rho_n}{W_n(\Ec(\phi_n))\rho_n}
		\le
		C\{\lambda(\Ec(\phi_n(1+e_n))\setminus \Ec(\phi_n(1-e_n))) \ell_n^2 + \rho_n\ell_n^2\}.
	\end{equation*}	
	The first term converges to zero by Proposition \ref{prop:extremal_set2}, whereas the second term vanishes by Assumption \ref{assump:sequences}. 
	Thus, $W_n(\hat \Ec(\phi_n))/W_n(\Ec(\phi_n)) = 1+o_\pr(1)$, and hence
	\begin{equation*}
		\log W_n(\hat \Ec(\phi_n))-\log W_n(\Ec(\phi_n)) = o_\pr(1).
	\end{equation*}
	Together with \eqref{eq:Wn_stochastic_reduction}, the first part of the proposition follows. Finally, the second part follows from the same algebra as in Lemma \ref{lem:wn_estimation}. If $\log \widehat W_n(\hat \Ec(\phi_n))-\log W_n(\Ec(\phi_n))=o_\pr(1)$, then the corresponding corrected centering constants satisfy
	\begin{equation*}
		\hat a_n(\hat \Ec(\phi_n))-a_n(\Ec(\phi_n))=o_\pr(\ell_n^{-1}),
		\qquad
		\hat a_n(\hat \Ec(\phi_n))/a_n(\Ec(\phi_n))=1+o_\pr(\ell_n^{-2}).
	\end{equation*}
\end{proof}

\noindent \textbf{Proof of Proposition \ref{prop:E_sandwich}.}
We will show separately, for $i=1, 2$, that
\begin{equation} \label{eq:sandwich}
	\pr\Big(\Ec_i\big(\phi_n (1 - e_n)\big) \subset \hat{\Ec}_i(\phi_n) \subset \Ec_i\big(\phi_n (1 + e_n)\big)\Big) \to 1.
\end{equation}

By Theorem A.1 of \cite{bucher2021}, $\| \hat{d}_n - d\|_\infty = \Oc_\pr(\sqrt{|\log(h_n)|}/r_n)$. By assumption, $\sqrt{|\log(h_n)|}/(\phi_n r_n) = o(e_n)$, such that $\| \hat{d}_n - d\|_\infty = o_\pr(e_n \phi_n)$.
Let $t \in \Ec_1\big(\phi_n (1 - e_n)\big)$, then $\big| |d(t)| - d_\infty \big| \le \phi_n(1 - e_n)$. By the triangle inequality,
\begin{align*}
	\big| |\hat{d}_n(t)| - \hat{d}_{n, \infty} \big| 
	& \le \big| |d(t)| - d_\infty \big| + \big| |\hat{d}_n(t)| - |d(t)| \big| + | \hat{d}_{n, \infty} - d_\infty | \\
	& \le \phi_n (1-e_n) + 2 \| \hat{d}_n - d\|_\infty \\
	& = \phi_n - \phi_n e_n (1 + o_\pr(1)),
\end{align*}
which is less than $\phi_n$ with probability converging to $1$. In particular, $t \in \hat{\Ec}_1(\phi_n)$, hence, $\Ec_1\big(\phi_n (1 - e_n)\big) \subset \hat{\Ec}_1(\phi_n)$. Similarly, for $t\in \hat{\Ec}_1(\phi_n)$,
\begin{equation*}
	\big| |d(t)| - d_\infty \big| \le  \phi_n + o_\pr(\phi_n e_n),
\end{equation*}
which is less than $\phi_n(1+e_n)$ with probability converging to $1$, such that $\hat{\Ec}_1(\phi_n) \subset \Ec_1\big(\phi_n (1 + e_n)\big)$, and \eqref{eq:sandwich} follows for $i=1$.

Let $t \in \Ec_3\big(\phi_n (1 - e_n)\big)$, then $\sup_{s\in[t, t+\delta]}|d(s)| - d_\infty \le \phi_n(1 - e_n)$. By the triangle inequality,
\begin{equation*}
	\sup_{s\in[t, t+\delta]}|\hat{d}_n(s)| - \hat{d}_{n,\infty}
	\le \sup_{s\in[t, t+\delta]}|d(s)| - d_\infty  + 2 \| \hat{d}_n - d\|_\infty,
\end{equation*}
where the right-hand side is bounded from above by $\phi_n$, with probability converging to $1$, by same arguments as before. Hence, $\Ec_3\big(\phi_n (1 - e_n)\big) \subset \hat{\Ec}_3(\phi_n)$, and similarly, $\hat{\Ec}_3(\phi_n)\subset \Ec_3\big(\phi_n (1 + e_n)\big)$, where the same follows for $\Ec_2$ by definition. 
\hfill $\blacksquare$

\noindent \textbf{Proof of Proposition \ref{prop:localization}.}
By Theorem A.1 of \cite{bucher2021}, 
\begin{equation*}
	\| \hat{d}_n - d\|_\infty = \Oc_\pr(\sqrt{|\log(h_n)|}/r_n) = o_\pr(\phi_n).
\end{equation*}
With probability converging to $1$, $\| \hat{d}_n - d\|_\infty \le \phi_n/2$. For all $t \notin \Ec_3(\phi_n)$,
\begin{align*}
	\sup_{s \in[t, t+\delta]}|\hat{d}_n(s)| 
	\ge \sup_{s \in[t, t+\delta]}|d(s)| - \| \hat{d}_n - d \|_\infty 
	> d_\infty + \phi_n - \| \hat{d}_n - d \|_\infty 
	> d_\infty + \frac{\phi_n}{2}
\end{align*}
with probability converging to $1$. Moreover,
\begin{equation*}
	\inf_{t\in\Ec(\phi_n)} \sgn(d(t)) \{\hat{d}_n(t) - d(t)\}
	\le \sup_{t\in\Ec(\phi_n)} |\hat{d}_n(t) - d(t)| \le \| \hat{d}_n - d\|_\infty \le \frac{\phi_n}{2},
\end{equation*}
and in particular, 
\begin{equation} \label{eq:outer_set}
	\inf_{t\in\Ec(\phi_n)} \sgn(d(t)) \{\hat{d}_n(t) - d(t)\} \le \sup_{s \in[t, t+\delta]}|\hat{d}_n(s)|  - d_\infty,
\end{equation}
for any $t \notin \Ec_3(\phi_n)$.

Now, let $t \in \Ec_3(\phi_n)$, such that $\sup_{s \in[t, t+\delta]}|d(s)| \le d_\infty + \phi_n$. Choose $s_t \in [t, t+\delta]$ such that $|d(s_t)| = \sup_{s \in[t, t+\delta]}|d(s)|$. Then, $d_\infty \le |d(s_t)| \le d_\infty + \phi_n$, such that $s_t \in \Ec_1(\phi_n)$. Since $\sup_{s \in[t, t+\delta]}|d(s)| \le d_\infty + \phi_n$ and $s_t \in [t, t+\delta]$, $s_t \in \Ec_2(\phi_n)$, hence, $s_t \in \Ec(\phi_n)$. Since $d_\infty > 0$, $d(s_t) \neq 0$, and, whenever $\| \hat{d}_n - d\|_\infty \le \phi_n/2$, $\sgn(\hat{d}_n(s_t)) = \sgn(d(s_t))$, so that 
\begin{equation*}
	| \hat{d}_n(s_t) | = |d(s_t) | + \sgn(d(s_t)) \{\hat{d}_n(s_t) - d(s_t)\}.
\end{equation*} 
In particular,
\begin{align*}
	\sup_{s \in[t, t+\delta]}|\hat{d}_n(s)| - d_\infty
	& \ge |\hat{d}_n(s_t)| - d_\infty \\
	& = |d(s_t) | - d_\infty + \sgn(d(s_t)) \{\hat{d}_n(s_t) - d(s_t)\} \\
	& \ge \sgn(d(s_t)) \{\hat{d}_n(s_t) - d(s_t)\} \\
	& \ge \inf_{t\in\Ec(\phi_n)} \sgn(d(t)) \{\hat{d}_n(t) - d(t)\} ,
\end{align*}
for any $t \in \Ec_3(\phi_n)$. In combination with \eqref{eq:outer_set}, we obtain
\begin{equation*}
	\hat{d}_{n,\infty} - d_\infty \ge \inf_{t\in\Ec(\phi_n)} \sgn(d(t)) \{\hat{d}_n(t) - d(t)\},
\end{equation*}
by taking the infimum over $t\in[0, 1-\delta]$, with probability converging to $1$.
\hfill $\blacksquare$

\noindent \textbf{Proof of Theorem \ref{thm:main_conv}.}
By Assumption \ref{assump:mu} (2) or (3) and Proposition \ref{prop:extremal_set}, $\Ec(\phi_n)$ satisfies Assumption \ref{assump:sets}.
By Proposition \ref{prop:localization}, $d_\infty - \hat{d}_{n,\infty} \le \sup_{t\in\Ec(\phi_n)} - \sgn(d(t)) \{\hat{d}_n(t) - d(t)\}$ with probability tending to $1$. In particular,
\begin{align*}
	T_n(d_\infty) 
	& \le \frac{r_n\hat{a}_n}{\|K^*\|_2 \hat{\sigma}_{\max}} \sup_{t\in\Ec(\phi_n)} - \sgn(d(t)) \{\hat{d}_n(t) - d(t)\} - \hat{a}_n^2 + o_\pr(1) \\
	& \le \frac{r_n a_n}{\|K^*\|_2 \sigma_{\max}} \sup_{t\in\Ec(\phi_n)} - \sgn(d(t)) \{\hat{d}_n(t) - d(t)\} - a_n^2 + o_\pr(1)
\end{align*} 
by Propositions \ref{prop:sigma_max_plugin} and \ref{prop:W_plugin}, such that $\liminf_{n\to\infty} \pr(T_n(d_\infty) \le x) \ge \exp(-\exp(-x))$ by \eqref{eq:signed_localized_gumbel}.
\hfill $\blacksquare$

\noindent \textbf{Proof of Corollary \ref{cor:test}.}
First, write 
\begin{equation*}
	T_n(\Delta) = T_n(d_\infty) + \frac{r_n\hat{a}_n}{\|K^*\|_2 \hat{\sigma}_{\max}} \big(\Delta - d_\infty\big).
\end{equation*}
By Theorem \ref{thm:main_conv}, $\liminf_{n\to\infty} \pr(T_n(d_\infty) \le x) \ge \exp(-\exp(-x))$. In particular, 
\begin{align*}
	\limsup_{n\to\infty} \pr(T_n(\Delta) > q_{1-\alpha}) \begin{cases}
		= 1 & \text{if}~\Delta > d_\infty > 0,\\
		\le \alpha  &  \text{if}~\Delta = d_\infty, \\
		= 0 & \text{if}~\Delta < d_\infty.
	\end{cases}
\end{align*}
It only remains to show consistency against alternatives with $d_\infty = 0$. If $d_\infty = 0$, there exists $t_0\in[0, 1-\delta]$, such that $\sup_{s \in[t_0, t_0+\delta]} |d(s)| = 0$, hence $d(s) = 0$ for all $s\in[t_0, t_0+\delta]$. Therefore,
\begin{equation*}
	|\hat{d}_{n,\infty} |
	\le \sup_{s \in[t_0, t_0+\delta]} |\hat{d}_n(s)|
	= \sup_{s \in[t_0, t_0+\delta]} |\hat{d}_n(s) - d(s)|
	\le \|\hat{d}_n - d\|_\infty.
\end{equation*}
Since $\sigma$ is bounded away from $0$ and finite by assumption, $\| \hat{d}_n - d\|_\infty = \Oc_\pr(\sqrt{|\log(h_n)|}/r_n)$, by Theorem A.1 of \cite{bucher2021}. In particular,
\begin{align*}
	T_n(\Delta) 
	& \ge \frac{r_n\hat{a}_n}{\|K^*\|_2 \hat{\sigma}_{\max}} \Delta - \frac{r_n\hat{a}_n}{\|K^*\|_2 \hat{\sigma}_{\max}} \| \hat{d}_n - d \|_\infty - \hat{a}_n^2 \\
	& = \frac{r_n\hat{a}_n}{\|K^*\|_2 \hat{\sigma}_{\max}} \Delta - \hat{a}_n\big(  \Oc_\pr(|\log(h_n)|^{1/2}) - \hat{a}_n\big),
\end{align*}
which diverges to $\infty$, and consistency follows.
\hfill $\blacksquare$

\noindent \textbf{Proof of Proposition \ref{prop:onset_consistency}.}
The bound
\begin{equation*}
	\sup_t|\widehat{M}_n(t)-M(t)| \le \|\hat d_n-d\|_\infty
\end{equation*}
follows from the triangle inequality, and consistency of $\hat d_{n,\infty}$ follows by
taking infima. For $\hat\tau_*$, continuity of $M$ implies that, for every $\eps>0$, the
function $M-d_\infty$ is bounded away from zero on $[0,(\tau_*-\eps)\vee0]$, unless this interval is empty. Thus no point before $\tau_*-\eps$ belongs to $\hat{\Ec}_3(\phi_n)$ with probability tending to one. Since $\tau_*\in\Ec_3$ and $\phi_n$ dominates the uniform estimation error, $\hat{\Ec}_3(\phi_n)$ intersects every right-neighborhood of $\tau_*$ with probability tending to one. Hence $\hat\tau_*\convp\tau_*$.

For $\hat\tau_\Delta$, the absence of false detections before $\tau_\Delta-\eps$ is automatic by the definition of $\tau_\Delta$, continuity of	$M$, and compactness. The robust-onset condition gives a point $t_\eps\in[\tau_\Delta,\tau_\Delta+\eps]$ with
$M(t_\eps)<\Delta$, which is selected by $\hat{\Tc}_\Delta$ with probability tending to one. Therefore
\begin{equation*}
	\pr(\tau_\Delta-\eps \le \hat\tau_\Delta \le \tau_\Delta+\eps) \to 1,
\end{equation*}
and the result follows. 
\hfill $\blacksquare$

\subsection{Proofs of Section \ref{sec:gaussian_evt}} \label{sec:auxiliary_results}

\noindent \textbf{Proof of Theorem \ref{thm:gaussian_approx}.}
By \eqref{eq:rate_sigma},
\begin{equation*}
	\bigg|\frac{1}{\hat{\sigma}_{\max}(\Ac_n)} - \frac{1}{\sigma_{\max}(\Ac_n)}\bigg|
	= \frac{| \hat{\sigma}_{\max}^2(\Ac_n) - \sigma_{\max}^2(\Ac_n)|}{\hat{\sigma}_{\max}(\Ac_n)\sigma_{\max}(\Ac_n) (\hat{\sigma}_{\max}(\Ac_n) + \sigma_{\max}(\Ac_n))}
	= \Oc_\pr(n^{-2/7}).
\end{equation*}
Since $\| \hat{d}_n - d\|_\infty = \Oc_\pr(\sqrt{|\log(h_n)|}/r_n)$, by Theorem A.1 of \cite{bucher2021}, 
\begin{align*}
	\bigg| \frac{r_n}{\hat{\sigma}_{\max}(\Ac_n)} \sup_{t\in \Ac_n} \big(\hat{d}_n(t) - d(t) \big)  - \frac{r_n}{\sigma_{\max}(\Ac_n)} \sup_{t\in \Ac_n} \big(\hat{d}_n(t) - d(t) \big) \bigg|
	= \Oc_\pr\bigg(\frac{\sqrt{|\log h_n|}}{n^{2/7}}\bigg),
\end{align*}
which is of order $o_\pr(a_n^{-1}(\Ac_n))$.
Analogously to the proof of (A.7) in \cite{bucher2021}, 
\begin{align} \label{eq:gaussian_approx1}
	& \frac{r_n}{\sigma_{\max}(\Ac_n)} \sup_{t\in \Ac_n} \big(\hat{d}_n(t) - d(t)\big) \\
	& = \frac{1}{\sqrt{nh_n} \sigma_{\max}(\Ac_n)} \sup_{t\in \Ac_n} 
	\bigg( \sum_{i=1}^n \sigma\big(\tfrac{i}{n}\big) V_i K_{h_n}^*\big(\tfrac{i}{n} - t\big)\bigg) + \Oc(r_n h_n^\kappa) + o_\pr\big(\tfrac{r_n \log^2 n}{n^{3/4 h_n}}\big), \notag
\end{align}
where $(V_i)_{i\in\N}$ denotes a sequence of i.i.d. standard normal random variables. Let $B = (B(t))_{t\ge 0}$ denote a standard Brownian motion. Then, by self-similarity of $B$,
\begin{align} \label{eq:gaussian_approx2}
	\frac{1}{\sqrt{nh_n}} 
	\sum_{i=1}^n \sigma\big(\tfrac{i}{n}\big) V_i K_{h_n}^*\big(\tfrac{i}{n} - t\big)
	& \stackrel{\Dc}{=} \frac{1}{\sqrt{nh_n}} 
	\sum_{i=1}^n  \sigma\big(\tfrac{i}{n}\big) K_{h_n}^*\big(\tfrac{i}{n} - t\big) \big(B(i) - B(i-1)\big) \\
	& \stackrel{\Dc}{=}
	\sum_{i=1}^n  \sigma\big(\tfrac{i}{n}\big) K_{h_n}^*\big(\tfrac{i}{n} - t\big) \big(B\big(\tfrac{i}{nh_n}\big) - B\big(\tfrac{i-1}{nh_n}\big)\big). \notag
\end{align}
The right-hand side approximates the stochastic integral $\int \sigma(t) K^*(x-t/h_n)\diff B_x$. More specifically,
\begin{align*}
	D_n(t) & := \sum_{i=1}^n  \sigma\big(\tfrac{i}{n}\big) K_{h_n}^*\big(\tfrac{i}{n} - t\big) \big(B\big(\tfrac{i}{nh_n}\big) - B\big(\tfrac{i-1}{nh_n}\big)\big) - \int_0^{1/h_n} \sigma(t) K^*(x-t/h_n)\diff B_x \\
	& = \sum_{i=1}^n \int_{(i-1)/(nh_n)}^{i/(nh_n)} \sigma\big(\tfrac{i}{n}\big) 	K^*\big(\tfrac{i}{nh_n} - t/h_n\big)  - \sigma(t) K^*(x-t/h_n)\diff B_x,
\end{align*}
so that
\begin{align*}
	\sup_{t\in[h_n, 1-h_n]} |D_n(t)| 
	& \le \sum_{i=1}^n \frac{1}{nh_n}  \sup_{s, t\in [\frac{i-1}{nh_n}, \frac{i}{nh_n}]}|B(s) - B(t)| \\
	& \phantom{\le \sum_{i=1}^n}\times \sup_{t\in[h_n, 1-h_n]} \sup_{x\in [\frac{i-1}{nh_n}, \frac{i}{nh_n}]}|\sigma\big(\tfrac{i}{n}\big) K^*\big(\tfrac{i}{nh_n} - t/h_n\big) - \sigma(t) K^*(x-t/h_n)| 
	.
\end{align*}
The kernel $K^*$ is supported on $[-1, 1]$, so that at most $\Oc(nh_n)$ many summands are non-zero. 
By Lipschitz continuity of $\sigma$ and $K^*$, 
\begin{equation*}
	\sup_{x\in [\frac{i-1}{nh_n}, \frac{i}{nh_n}]} | \sigma\big(\tfrac{i}{n}\big) K^*\big(\tfrac{i}{nh_n} - t/h_n\big) - \sigma(t) K^*\big(x - t/h_n\big) | \le C \Big(\frac{1}{nh_n} + h_n\Big),
\end{equation*}
uniformly in $t\in[h_n, 1-h_n]$, for some constant $C \ge 0$. By Theorem 3.2.1 of \cite{khoshnevisan2002}, 
\begin{equation*}
	\sup_{|s - t| \le \frac{1}{nh_n}}|B(s) - B(t)| \le C \sqrt{\frac{\log(nh_n)}{nh_n}},
\end{equation*}
almost surely, for all large $n\in \N$. In particular, 
\begin{equation*}
	\sup_{t\in[h_n, 1-h_n]} |D_n(t)| = \Oc\Big(\big(\tfrac{\log(nh_n)}{nh_n}\big)^{1/2} \big(h_n + \tfrac{1}{nh_n}\big)\Big)
\end{equation*}
almost surely. In view of \eqref{eq:gaussian_approx1} and \eqref{eq:gaussian_approx2}, 
\begin{align*} 
	& \frac{r_n}{\sigma_{\max}(\Ac_n)} \sup_{t\in \Ac_n} \big(\hat{d}_n(t) - d(t)\big) \\
	& \stackrel{\Dc}{=} \frac{1}{\sigma_{\max}(\Ac_n)} \sup_{t\in \Ac_n} 
	\int_0^{1/h_n} \sigma(t) K^*(x-t/h_n)\diff B_x + \Oc(r_nh_n^\kappa) + o_\pr\big(\tfrac{r_n \log^2 n}{n^{3/4} h_n}\big). \notag
\end{align*}
Since $t\in[h_n, 1-h_n]$, the integral might be extended to the whole real line without changing its value. 
\hfill $\blacksquare$

\textbf{Proof of Theorem \ref{thm:localized_gumbel}.}
The theorem follows immediately from Theorem \ref{thm:gaussian_approx} and
Lemmas \ref{lem:wn_estimation} and \ref{lem:gumbel_limit}. \hfill $\blacksquare$

The conditions on $h_n, \rho_n$ and $\theta_n$ may appear cumbersome and are not required for all steps. They are formulated to cover the two distinct cases, specified by Assumption \ref{assump:regularity_sigma}. Part 1 of the assumption allows for polynomial sequences. In the level case $d = \mu - g(\mu, \cdot)$ the conditions are satisfied, if $h_n \sim n^{-1/3-\alpha}$ and $\rho_n = 1/\sqrt{nh_n}$, for some $\alpha \in (0, 1/6)$. In this case, the contribution of intervals $[a_{i,n}, b_{i,n}]$ corresponding to isolated points, is negligible, and we can assume without loss of generality, that $\liminf_{n\to\infty} \theta_n \ge c > 0$. 

In contrast, when $\sigma$ does not assume its maximum on an interval of positive length, equality in \eqref{eq:main_conv} can still hold. In this case, however, $\rho_n$ must decrease substantially slower than $h_n$, as specified in part 2 of Assumption \ref{assump:regularity_sigma}.

The following steps all require $h_n, \rho_n \to 0, nh_n\to\infty$, and $h_n \le \rho_n^{1+\eps}$, for some $\eps > 0$. The last condition implies $\log W_n / \ell_n^2 \le C$, which in turn yields $\rho_n a_n = \Oc(\rho_n \ell_n)$.
Theorem \ref{thm:gaussian_approx} requires $\ell_n r_nh_n^\kappa \to 0$ and $r_n(\log n)^2 / (n^{3/4}h_n) \to 0$. Lemma \ref{lem:wn_estimation} requires $\rho_n \ell_n^2 = \Oc(1)$, which follows from  $\rho_n |\log(h_n)| \to 0$. Lemma \ref{lem:gumbel_limit} requires $\rho_n/\theta_n \to 0$ and $\rho_n \ell_n \to 0$. The latter follows from  $\rho_n |\log(h_n)| \to 0$. Finally, the lemma requires $\rho_n \ell_n^2 = o(1)$ under Assumption \ref{assump:regularity_sigma} (2), which follows from  $\rho_n |\log(h_n)| \to 0$ in this case.

\begin{lemma} \label{lem:wn_estimation}
	Let Assumptions \ref{assump:error}, \ref{assump:sequences} and \ref{assump:sets} be satisfied. Then $\log(\widehat{W}_n^{(c)}(\Ac_n)) - \log(W_n^{(c)}(\Ac_n)) = o_\pr(1)$. In particular, $a_n(\Ac_n) - \hat{a}_n(\Ac_n) = o_\pr(\ell_n^{-1})$ and $\hat{a}_n(\Ac_n) / a_n(\Ac_n) = 1 + o_\pr(\ell_n^{-2})$.
\end{lemma}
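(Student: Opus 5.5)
The plan is to work directly with the definition of $W_n(\Ac_n)$ as a sum of exponential weights over block representatives $\xi_{\nu,i,n}$. The set $\Ac_n$ is deterministic here, so the block partition is the same for $W_n$ and $\widehat W_n$. The two quantities differ only through replacing $\sigma$ by $\hat\sigma_n$, both in $\sigma_{\max}(\Ac_n)$ and in $\sigma(\xi_{\nu,i,n})$.

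\textbf{Step 1: uniform relative error of the weights.} Write
\begin{equation*}
	w_{\nu,i}=\exp\Big(-\tfrac{\ell_n^2}{2}\big[\sigma_{\max}^2/\sigma^2(\xi_{\nu,i,n})-1\big]\Big)
\end{equation*}
and let $\hat w_{\nu,i}$ be its plug-in analogue. By \eqref{eq:rate_sigma}, $\sup_t|\hat\sigma_n^2(t)-\sigma^2(t)|=\Oc_\pr(n^{-2/7})$ on $[\gamma_n,1-\gamma_n]$. The set $\Ac\subset(0,1)$ and $\Ac_n$ stays in a compact subset of $(0,1)$ for large $n$, so this rate applies on $\Ac_n$. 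Since $\sigma$ is bounded away from zero, the same rate holds for $\hat\sigma_{\max}^2(\Ac_n)-\sigma_{\max}^2(\Ac_n)$ and for the ratios $\hat\sigma_{\max}^2/\hat\sigma^2(\xi)$ versus $\sigma_{\max}^2/\sigma^2(\xi)$, uniformly in the blocks. Hence
\begin{equation*}
	\max_{\nu,i}\big|\log\hat w_{\nu,i}-\log w_{\nu,i}\big|\le \tfrac{\ell_n^2}{2}\,\Oc_\pr(n^{-2/7}).
\end{equation*}
Because $\ell_n^2\asymp\log(\rho_n/h_n)\lesssim|\log h_n|$ and $nh_n\to\infty$ forces $|\log h_n|\lesssim\log n$, this bound is $o_\pr(1)$. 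Summing gives $\widehat W_n/W_n=1+o_\pr(1)$, that is, $\log\widehat W_n(\Ac_n)-\log W_n(\Ac_n)=o_\pr(1)$. I expect this step to be routine.

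\textbf{Step 2: transfer to $W_n^{(c)}$ and $a_n$.} Set $x=\log W_n^{(c)}$. It solves $f(x):=x+x^2/(2\ell_n^2)=\log W_n$, and $f'(x)=1+x/\ell_n^2$. Two facts are needed.
\begin{itemize}
	\item[(i)] \emph{Boundedness of $\log W_n/\ell_n^2$.} At most one weight per block contributes, so $W_n\le \lambda(\Ac_n)/\rho_n+L\lesssim\rho_n^{-1}$. The condition $h_n\le\rho_n^{1+\eta}$ gives $\log(\rho_n/h_n)\ge\eta|\log\rho_n|$, hence $\log W_n/\ell_n^2\le C$.
	\item[(ii)] \emph{Choice of root.} $W_n\ge1$ for large $n$, because the block containing a maximizer of $\sigma$ has weight close to one up to Lipschitz error $\ell_n^2\rho_n\to0$ (from $\rho_n|\log h_n|\to0$). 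So the relevant root is the nonnegative one, and $x\in[0,C\ell_n^2]$.
\end{itemize}
On this range $f'\ge1$. By the mean value theorem, $|\log\widehat W_n^{(c)}-\log W_n^{(c)}|\le|\log\widehat W_n-\log W_n|=o_\pr(1)$, which is the first claim.

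The remaining claims then follow from the definition $a_n=\ell_n+\log W_n^{(c)}/\ell_n$:
\begin{equation*}
	\hat a_n-a_n=\big(\log\widehat W_n^{(c)}-\log W_n^{(c)}\big)/\ell_n=o_\pr(\ell_n^{-1}).
\end{equation*}
Since $a_n\ge\ell_n$, this gives $\hat a_n/a_n-1=o_\pr(\ell_n^{-2})$.

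\textbf{Main obstacle.} The delicate point is Step 1. The multiplier $\ell_n^2$ must be killed by the variance-estimation rate; this needs $\ell_n^2n^{-2/7}\to0$, which I would justify via $\ell_n^2=\Oc(\log n)$. The uniform rate on the support of $\Ac_n$ also needs checking. If boundary effects of $\hat\sigma_n$ interfered, I would instead use the trimming $\Ac_n\subset[h_n,1-h_n]$ and the constant extension of $\hat\sigma_n$ near the edges, together with Lipschitz continuity of $\sigma$, to cover the gap.
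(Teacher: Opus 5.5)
Your proposal is correct and follows essentially the paper's proof. In both, the rate \eqref{eq:rate_sigma} times $\ell_n^2$ controls the exponential weights uniformly, giving $\widehat W_n/W_n=1+o_\pr(1)$, and this is then transferred through the quadratic relation defining $W_n^{(c)}$ and the definition of $a_n$. The paper does the transfer with the explicit root $\log W_n^{(c)}=\ell_n\sqrt{2\log W_n+\ell_n^2}-\ell_n^2$ and the identity $a-b=(a^2-b^2)/(a+b)$, where you use the mean value theorem with $f'\ge 1$. Your arguments for $\ell_n^2 n^{-2/7}\to0$ and for the choice of root ($W_n\ge 1$) make explicit steps that the paper leaves implicit.
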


\begin{proof}
	By solving the equation for $W_n^{(c)}(\Ac_n)$, we get the explicit representation
	\begin{equation} \label{eq:representation_Wnc}
		\log(W_n^{(c)}(\Ac_n))  = \ell_n \sqrt{2\log(W_n(\Ac_n)) + \ell_n^2} -\ell_n^2.
	\end{equation}
	Using the identity $a - b = (a^2 - b^2)/(a+b)$, we can rewrite
	\begin{equation*}
		\log(\widehat{W}_n^{(c)}(\Ac_n)) - \log(W_n^{(c)}(\Ac_n))
		= 2\ell_n \frac{\log(\widehat{W}_n(\Ac_n)) - \log(W_n(\Ac_n))}{\sqrt{2 \log(W_n(\Ac_n)) + \ell_n^2} + \sqrt{2 \log(\widehat{W}_n(\Ac_n)) + \ell_n^2}}.
	\end{equation*}
	Hence, it suffices to prove $\log(W_n(\Ac_n)) - \log(\widehat{W}_n(\Ac_n)) = o_\pr(1)$.
	By \eqref{eq:rate_sigma} and assumption, $\Delta_n = \sup_{t\in[\gamma_n, 1-\gamma_n]} |\hat{\sigma}(t) - \sigma(t)| = o_\pr(\ell_n^{-2})$, and by the triangle inequality, $|\sigma_{\max}^2 - \hat{\sigma}_{\max}^2| \le \Delta_n$. Since $\sigma$ is bounded away from zero, 
	\begin{align*}
		\sup_{t\in[\gamma_n, 1-\gamma_n]} \frac{\sigma_{\max}^2}{\sigma^2(t)} - \frac{\hat{\sigma}_{\max}^2}{\hat{\sigma}^2(t)}
		& = \sup_{t\in[\gamma_n, 1-\gamma_n]} \frac{\sigma_{\max}^2(\hat{\sigma}^2(t) - \sigma^2(t)) + (\sigma_{\max}^2 - \hat{\sigma}_{\max}^2) \sigma^2(t)}{\sigma^2(t) \hat{\sigma}^2(t)} \\
		& \le 2 \Delta_n (2 \sup_{t\in[\gamma_n, 1-\gamma_n]}\sigma(t) + \Delta_n) \sup_{t\in[\gamma_n, 1-\gamma_n]}\frac{1}{\hat{\sigma}^2(t)} \bigg(1 + \frac{\sigma_{\max}^2}{\sigma^2(t)} \bigg) \\
		& \le 2 \Delta_n (2 \sup_{t\in[\gamma_n, 1-\gamma_n]}\sigma(t) + \Delta_n) \sup_{t\in[\gamma_n, 1-\gamma_n]} \frac{1 + \sigma_{\max}^2/\sigma^2(t)}{\sigma^2(t) - \Delta_n},
	\end{align*}
	which can be bounded by $C_\sigma \Delta_n$, for some suitable constant $C_\sigma\ge 0$, since $\Delta_n = o_\pr(\ell_n^{-2})$.

	By the mean value theorem and monotony of the exponential function, 
	$|e^a - e^b| \le e^{a \vee b} | a - b| \le e^a e^{|a-b|} |a-b|$.
	If we apply this inequality to the summands in $\widehat{W}_n(\Ac_n) - W_n(\Ac_n)$, we can factor out the terms involving differences $e^{|a-b|} |a-b|$, since these differences are bounded by $C_\sigma \Delta_n \ell_n^2/2$, and recover $W_n(\Ac_n)$ from the remaining terms. In particular,
	\begin{equation*}
		|\widehat{W}_n(\Ac_n) - W_n(\Ac_n)|
		\le C_\sigma \Delta_n \frac{\ell_n^2}{2} \exp\Big(C_\sigma \Delta_n \frac{\ell_n^2}{2}\Big) W_n(\Ac_n).
	\end{equation*}
	Note that $\Delta_n \ell_n^2 = o_\pr(1)$, hence, $|\widehat{W}_n(\Ac_n) - W_n(\Ac_n)| / W_n(\Ac_n) = o_\pr(1)$.
	Recall that $\log(1 - x) = -x + \Oc(x^2) = - x (1 + \Oc(x))$, as $x\to 0$, to conclude
	\begin{align*}
		|\log(\widehat{W}_n(\Ac_n)) - \log(W_n(\Ac_n))|
		& = \bigg|\log\bigg(1 - \Big(1 - \frac{\widehat{W}_n(\Ac_n)}{W_n(\Ac_n)}\Big)\bigg)\bigg| \\
		& = \bigg| \frac{W_n(\Ac_n) - \widehat{W}_n(\Ac_n)}{W_n(\Ac_n)} \bigg| \big(1 + o_\pr(1)\big)
		= o_\pr(1).
	\end{align*}
	For the second part of the lemma, note that 
	\begin{equation*}
		a_n(\Ac_n) - \hat{a}_n(\Ac_n) = \ell_n^{-1}\big(\log(\widehat{W}_n^{(c)}(\Ac_n)) - \log(W_n^{(c)}(\Ac_n))\big) = o_\pr(\ell_n^{-1})
	\end{equation*}
	and
	\begin{equation*}
		\frac{\hat{a}_n(\Ac_n)}{a_n(\Ac_n)} 
		= \frac{\ell_n +  \log(\widehat{W}_n^{(c)}(\Ac_n)) / \ell_n}{\ell_n +  \log(W_n^{(c)}(\Ac_n)) / \ell_n }
		= 1 + \frac{\log(\widehat{W}_n^{(c)}(\Ac_n)) - \log(W_n^{(c)}(\Ac_n))}{\ell_n^2 + \log(W_n^{(c)}(\Ac_n))}
		= 1 + o_\pr(\ell_n^{-2}).
	\end{equation*}
\end{proof}

\begin{lemma} \label{lem:gumbel_limit}
	Let Assumptions \ref{assump:kern}, \ref{assump:sequences} and \ref{assump:sets} be satisfied with $\rho_n / \theta_n \to 0$ as $n\to\infty$. Finally, let $\sigma:[0, 1]\to\R$ be Lipschitz continuous and bounded away from $0$, i.\,e., $\inf_{t\in\Ac}\sigma(t) > 0$. Then, 
	\begin{equation*}
		\liminf_{n\to\infty} \pr\bigg(\frac{a_n(\Ac_n)}{\|K^*\|_2  \sigma_{\max}(\Ac_n)} \sup_{t\in \Ac_n} \int_\R \sigma(t) K^*(x-\tfrac{t}{h_n}) \diff B_x - a_n^2(\Ac_n) \le x\bigg) \ge \exp(-\exp(-x)),
	\end{equation*}
	for a standard Brownian motion $B$.	
	If additionally Assumption \ref{assump:regularity_sigma} (1) or (2) is satisfied, equality holds in the previous display.
\end{lemma}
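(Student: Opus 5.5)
We plan to prove Lemma \ref{lem:gumbel_limit} by a classical block argument in the spirit of Bickel--Rosenblatt and Piterbarg, with Theorem \ref{thm:evt} supplying the local tail asymptotics. Write $Y(s) = \sigma(h_n s)\int_\R K^*(x-s)\diff B_x/\|K^*\|_2$ on the rescaled set $\Ac_n/h_n$. Then $Y = \sigma(h_n\cdot)X(\cdot)$, where $X$ is the stationary unit-variance process of Section \ref{subsec:pickands} with kernel $K^*$. We use the decomposition \eqref{eq:set_decomposition}. Each block $[\xi_{\nu,i,n},\xi_{\nu+1,i,n}]$ is split into a ``big'' part of length $\rho_n$ and a separating gap of length $2h_n$. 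Since $K^*$ is supported on $[-1,1]$, the values of $X$ on big parts belonging to different blocks depend on Brownian increments over disjoint sets and are therefore independent. The gaps and the remainder $R_n$ are collected separately.

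\textbf{Step 1: freezing $\sigma$ on blocks.} On a block of length $\rho_n$, Lipschitz continuity gives $\sigma(t) = \sigma(\xi_{\nu,i,n})(1+\Oc(\rho_n))$. The threshold is $u_n = (x+a_n^2)/a_n \approx \ell_n$, so $u_n\,\Oc(\rho_n)\,\ell_n \to 0$ because $\rho_n\ell_n^2 = \Oc(1)$; we actually need $o(1)$ here, which follows from $\rho_n|\log h_n|\to 0$. Hence $\{\sup_{\text{block}} Y > u_n\sigma_{\max}\}$ may be replaced, up to a factor $1+o(1)$ in probability, by $\{\sup_{[0,\rho_n/h_n]}X > c_{\nu,i,n}\ell_n\}$ with $c_{\nu,i,n} = u_n\sigma_{\max}/(\ell_n\sigma(\xi_{\nu,i,n}))$.

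\textbf{Step 2: block tail asymptotics.} Theorem \ref{thm:evt} applies uniformly over the blocks, since $u_n$ exceeds $\ell_n$ by order $\log W_n^{(c)}/\ell_n + x/\ell_n$. It gives the block exceedance probability as $H_2C_0^{1/2}(\rho_n/h_n)g\Psi(g)$ with $g = c_{\nu,i,n}\ell_n$. We then expand $g^2/2 = u_n^2\sigma_{\max}^2/(2\sigma^2(\xi))$. With $u_n^2 = a_n^2+2x+o(1)$ and $a_n^2 = \ell_n^2 + 2\log W_n^{(c)} + \log^2 W_n^{(c)}/\ell_n^2$, the defining equation of $W_n^{(c)}$ shows that $\ell_n^2$ absorbs the factor $\Lambda_2\rho_n/(2\pi h_n)$. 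The sum over blocks is then $e^{-x}W_n^{-1}\sum \exp(-\tfrac{\ell_n^2}{2}[\sigma_{\max}^2/\sigma^2(\xi)-1])(1+o(1))$, up to a cross term $\exp(-(\log W_n^{(c)}+x)[\sigma_{\max}^2/\sigma^2-1])\le 1$. This cross term is exactly where the inequality (rather than equality) arises: blocks with $\sigma<\sigma_{\max}$ receive extra downweighting, so the sum is at most $e^{-x}(1+o(1))$.

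\textbf{Step 3: assembling the limit.} By independence of the big parts, $\pr(\text{no exceedance on big parts}) = \prod(1-p_{\nu,i}) \ge \exp(-\sum p_{\nu,i}(1+o(1)))$, because $\max p_{\nu,i}\to 0$. The gaps and the remainder contribute a union-bound probability of order $(h_n/\rho_n + \rho_n/\theta_n)e^{-x}\to 0$, which is where $\rho_n/\theta_n\to0$ enters. This yields the $\liminf\ge\exp(-e^{-x})$ bound.

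For equality under Assumption \ref{assump:regularity_sigma}, we must show that the weighted sum is asymptotically $W_n$, that is, that the cross term tends to $1$ on the blocks that dominate $W_n$. Under part (1), the blocks inside the interval $I$ where $\sigma=\sigma_{\max}$ number at least of order $\lambda(I)/\rho_n$. Blocks with $\sigma_{\max}^2/\sigma^2-1\gg \ell_n^{-2}$ then contribute negligibly relative to these, while for the remaining blocks the cross term is $\exp(-\Oc(\log W_n/\ell_n^2))\to1$. Under part (2), the same comparison uses $\ell_n^{-2}\le\theta_n$ together with the $\log|\log h_n|$ condition to control $\log W_n^{(c)}/\ell_n^2$. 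I expect this sharpness step to be the main obstacle, specifically controlling the competition between near-maximal-variance blocks and the corrections induced by $\log W_n^{(c)}$, more than the lower bound itself, which follows from Steps 1--3 and Theorem \ref{thm:evt}.
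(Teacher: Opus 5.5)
Your Steps 1--3 match the paper's proof of the lower bound. Both use big blocks of length $\rho_n$ separated by gaps of length $2h_n$, independence from the support of $K^*$, freezing $\sigma$ at $\xi_{\nu,i,n}$, and Theorem~\ref{thm:evt} applied uniformly over blocks. In both, the inequality comes from the factor $\exp(-[\sigma_{\max}^2/\sigma^2(\xi_{\nu,i,n})-1](\log W_n+x))\le 1$. You also correctly see that the freezing error enters the exponent at order $\rho_n\ell_n^2$. One correction: the defining equation of $W_n^{(c)}$ gives $a_n^2/2=\ell_n^2/2+\log W_n$, so the cross factor carries $\log W_n$, not $\log W_n^{(c)}$.

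The gap is in the sharpness step. Under Assumption~\ref{assump:regularity_sigma}~(1) you assert that on blocks with $\sigma_{\max}^2/\sigma^2(\xi_{\nu,i,n})-1=\Oc(\ell_n^{-2})$ the cross factor is $\exp(-\Oc(\log W_n/\ell_n^2))\to1$. Nothing gives $\log W_n=o(\ell_n^2)$ here:
under (1), $W_n\asymp\rho_n^{-1}$ by Proposition~\ref{prop:sequences}, so $\log W_n\approx|\log\rho_n|$, while $\ell_n^2\approx2\log(\rho_n/h_n)$. The condition $h_n\le\rho_n^{1+\eta}$ only yields $\log W_n/\ell_n^2=\Oc(1)$. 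For the paper's own choice $h_n\sim n^{-1/3-\alpha}$, $\rho_n=(nh_n)^{-1/2}$, the ratio is about $(1/3-\alpha/2)/(3\alpha)\ge 1/2$. So the cross factor on such blocks stays bounded away from $1$.

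Equality under (1) must instead come from counting blocks:
blocks with $\sigma(\xi_{\nu,i,n})=\sigma_{\max}$ have cross factor exactly $1$ and number at least $c\,\lambda(I)/\rho_n$, a non-vanishing fraction of $W_n\le\rho_n^{-1}$.
Blocks with $\sigma_{\max}^2/\sigma^2-1\ge e_n\ell_n^{-2}$, $e_n\to\infty$, carry total weight at most $e^{-e_n/2}\rho_n^{-1}$.
The blocks in between, with $0<\sigma_{\max}^2-\sigma^2(\xi_{\nu,i,n})\le Ce_n\ell_n^{-2}$, number $o(\rho_n^{-1})$, because $\lambda\{t:0<\sigma_{\max}^2-\sigma^2(t)\le\delta\}\to0$ as $\delta\to0$.
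This is what Proposition~\ref{prop:sequences}~(4) does with $\eps_n=\delta_n/\log W_n$.

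Under Assumption~\ref{assump:regularity_sigma}~(2) your sketch needs sharpening in the same place. The near-maximal mass is only of order $\ell_n^{-2}\rho_n^{-1}$ (Proposition~\ref{prop:sequences}~(3)), while up to $\rho_n^{-1}$ blocks lie farther out. So a cut at ``$\gg\ell_n^{-2}$'' does not make the far blocks negligible. You need a cut $\eps_n$ with $\ell_n^2e^{-\eps_n\ell_n^2/2}\to0$, e.g.\ $\eps_n=(4+\eps)\log\ell_n/\ell_n^2$ as in the paper. The cross factor on the retained blocks is then $\exp(-\Oc(\eps_n\log W_n))$, with $\eps_n\log W_n\lesssim\log\ell_n\cdot|\log\rho_n|/|\log h_n|$. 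The condition $\log|\log h_n|\cdot|\log\rho_n|/|\log h_n|\to0$ is there precisely to make this product vanish, since $\log\ell_n\asymp\log|\log h_n|$. It is not a bound on $\log W_n^{(c)}/\ell_n^2$.
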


\begin{proof}
	For the sake of readability, denote $a_n = a_n(\Ac_n), \sigma_{\max} = \sigma_{\max}(\Ac_n)$ and $W_n = W_n(\Ac_n)$. First, define 
	\begin{equation*}
		\Pc_0 = \pr\bigg(\frac{a_n}{\|K^*\|_2  \sigma_{\max}} \sup_{t\in \Ac_n} 
		\int_\R \sigma(t) K^*(x-\tfrac{t}{h_n})\diff B_x - a_n^2 \le x\bigg).
	\end{equation*}
	By substituting $t$ with $t h_n$, it holds
	\begin{align*}
		\sup_{t\in \Ac_n} 
		\int_\R \sigma(t) K^*(x-\tfrac{t}{h_n}) \diff B_x 
		= \sup_{t\in \Ac_n / h_n} \sigma(th_n) \int_\R  K^*(x-t)\diff B_x. 
	\end{align*}
	Let $X(t)$ denote the centered Gaussian process $\int_\R K^*(x-t)\diff B_x / \|K^*\|_2$. Note that $\var(X(t)) = 1$ and $\cov(X(t), X(s)) = \int K^*(x) K^*(x - (t-s))\diff x / \|K^*\|_2^2$. Since $K^*$ has support $[-1 ,1]$, $X(t)$ is independent across intervals of distance $2$ or greater. The asymptotic behavior of the supremum is determined by the points in an environment of $\{t \in \Ac: \sigma(t) = \sigma_{\max}\}$. To quantify this dependence, define $M(\delta) = \{t \in \Ac: \sigma^2(t) \ge (1- \delta) \sigma_{\max}^2\}$.
	
	By Assumption \ref{assump:sets}, the intervals $[a_{i, n}, b_{i, n}]$ have lengths greater than $\theta_n$. The intervals $ [\xi_{\nu, i, n}, \xi_{\nu+1, i, n}]$ in the set decomposition \eqref{eq:set_decomposition} have length $\rho_n + 2h_n$. Since $\rho_n = o(\theta_n)$ by assumption, the number of intervals $K_{i, n}$ tends to $\infty$ for each $i \in \{1, \dots, L\}$.	
	For sufficiently large $n$, the intervals $[a_{i, n}, b_{i, n}]$ are disjoint with distances greater than $2h_n$, hence,
	\begin{equation} \label{eq:interval_product}
		\Pc_0 = \prod_{i=1}^{L} \pr\bigg(a_n \frac{\sigma(t h_n)}{\sigma_{\max}} \sup_{t\in [a_{i, n}, b_{i, n}]/h_n} X(t)  - a_n^2 \le x\bigg).
	\end{equation}
	Recall $\xi_{\nu, i, n} = a_{i, n} + (\nu - 1) (\rho_n + 2 h_n)$, and define the big blocks $B_{i, \nu} = [\xi_{\nu, i, n}, \xi_{\nu, i, n} + \rho_n]/h_n$ and small blocks $S_{i, \nu} = (\xi_{\nu, i, n} + \rho_n , \xi_{\nu+1, i, n})/h_n$,
	for $\nu = 1, \dots, K_{i, n}$ and $i = 1, \dots, L$. Further, let $R_i$ denote the remainder, such that
	\begin{equation*}
		\Big[\frac{a_{i, n}}{h_n}, \frac{b_{i, n}}{h_n}\Big] = \bigcup_{\nu=1}^{K_{i, n}} \big(B_{i, \nu} \cup S_{i, \nu}\big) \cup R_i.
	\end{equation*}
	Then $R_i$ has length less than $\rho_n /h_n + 2$. 
	By the same arguments as in the proof of Proposition A.6 of \cite{bucher2021}, more specifically, by the arguments used to treat the right-hand side of (A.27), the small blocks and the remainder are negligible compared to the big blocks. Intuitively, the probability that $X(t)$ assumes its maximum within the (asymptotically) negligible sets $S_{i, \nu}$ or $R_i$, vanishes as $n\to\infty$.
	
	Note that $X(t)$ is independent across the big blocks since they have distances of $2$ or greater. By omitting the small blocks and the remainder, 
	\begin{equation} \label{eq:interval_product_upper}
		\Pc_0 = (1 + o(1)) \prod_{i=1}^{L} \prod_{\nu=1}^{K_{i, n}} \pr\bigg(a_n \bigg[\frac{\sigma(\xi_{\nu, i, n})}{\sigma_{\max}} \sup_{t\in B_{i, \nu}} X(t) - a_n + \Oc(\rho_n)\bigg] \le x\bigg),
	\end{equation}
	where we used Lipschitz continuity of $\sigma$ and independence of $X$ across the blocks. 
	By Theorem \ref{thm:evt}, and since $x \Psi(x) = (1+o(1)) (2\pi)^{-1/2} \exp(-x^2/2)$ as $x\to\infty$,
	\begin{align*}
		p(B_{i, \nu}) & := \pr\bigg(a_n \bigg[\frac{\sigma(\xi_{\nu, i, n})}{\sigma_{\max}} \sup_{t\in B_{i, \nu}} X(t) - a_n + \Oc(\rho_n)\bigg] > x\bigg)\\
		& = (H_2 + o(1)) \sqrt{\frac{C_0}{2\pi}} \frac{\rho_n}{h_n}  \exp\bigg( - \frac{1}{2} \frac{\sigma_{\max}^2}{\sigma^2(\xi_{\nu, i, n})} \big(a_n + x/a_n + \Oc(\rho_n)\big)^2 \bigg)
	\end{align*}
	uniformly for all $\nu \in \{1, \dots, K_{i, n}\}$. By Proposition \ref{prop:sequences}, $\rho_n a_n = \rho_n \ell_n (1 + \log (W_n^{(c)}) / \ell_n^2) = \Oc(\rho_n\ell_n) = o(1)$. Since $H_2 = \pi^{-1/2}$ and $\log(W_n^{(c)}) + \log^2(W_n^{(c)}) / (2 \ell_n^2) = \log(W_n)$,
	\begin{align*}
		p(B_{i, \nu}) & = (H_2 + o(1)) \sqrt{\frac{C_0}{2\pi}}  \frac{\rho_n}{h_n}  \exp\Big( - \frac{\sigma_{\max}^2}{\sigma^2(\xi_{\nu, i, n})} \Big[\frac{a_n^2}{2} + x + \frac{x^2}{2a_n^2} + \Oc(\rho_n a_n)\Big] \Big) \\
		& = (H_2 + o(1)) \frac{2\pi}{\Lambda_2} \sqrt{\frac{C_0}{2\pi}}  \exp\Big(\frac{\ell_n^2}{2} - \frac{\sigma_{\max}^2}{\sigma^2(\xi_{\nu, i, n})} \Big[\frac{a_n^2}{2} + x \Big] \Big) \\
		& = (H_2 + o(1)) \frac{2\pi}{\Lambda_2} \sqrt{\frac{C_0}{2\pi}}  \exp\Big(\frac{\ell_n^2}{2} - \frac{\sigma_{\max}^2}{\sigma^2(\xi_{\nu, i, n})} \Big[\frac{\ell_n^2}{2} + \log W_n^{(c)} + \frac{\log^2 W_n^{(c)}}{2 \ell_n^2} + x \Big] \Big) \\
		& = (e^{-x} + o(1)) \frac{1}{W_n} \exp\Big(- \Big[\frac{\sigma_{\max}^2}{\sigma^2(\xi_{\nu, i, n})} - 1 \Big] \Big[\frac{\ell_n^2}{2} + \log W_n + x \Big] \Big) .
	\end{align*}
	Since $\sigma^2(\xi_{\nu, i, n}) \le \sigma_{\max}^2$ and $\ell_n^2/2 +\log W_n + x \ge 0$, for sufficiently large $n$, $p(B_{i,\nu}) = \Oc(W_n^{-1})$. 
	Recall that $\log(1 - x) = -x + \Oc(x^2) = - x (1 + \Oc(x))$, as $x\to 0$. Combining \eqref{eq:interval_product} and \eqref{eq:interval_product_upper},
	\begin{align} \label{eq:log_prob}
		\log(\Pc_0) 
		& = \sum_{i=1}^{L} \sum_{\nu=1}^{K_{i, n}} \log(1 - p(B_{i, \nu})) \\
		& = - (1 + \Oc(W_n^{-1})) \sum_{i=1}^{L} \sum_{\nu=1}^{K_{i, n}} p(B_{i, \nu}) \notag  \\
		& \ge - (e^{-x} + o(1)) \frac{1}{W_n} \sum_{i=1}^{L} \sum_{\nu=1}^{K_{i, n}} \exp\Big(- \Big[\frac{\sigma_{\max}^2}{\sigma^2(\xi_{\nu, i, n})} - 1 \Big] \frac{\ell_n^2}{2} \Big) \notag \\
		& = - \exp(-x) + o(1), \notag
	\end{align}
	and $\Pc_0 \ge \exp\big(-\exp(-x)\big)(1+ o(1))$. 
	
	Consider the case that Assumption \ref{assump:regularity_sigma} (1) or (2) is satisfied. By Proposition \ref{prop:sequences} (4), 
	\begin{equation*}
		\frac{1}{W_n} \sum_{(i, \nu): \xi_{\nu, i, n} \in M(\eps_n)} \exp\Big(- \Big[\frac{\sigma_{\max}^2}{\sigma^2(\xi_{\nu, i, n})} - 1 \Big] \frac{\ell_n^2}{2}\Big) \to 1,
	\end{equation*}
	for some sequence $\eps_n$ with $\eps_n \log W_n \to 0$. Then, 
	\begin{align*} 
		& \frac{1}{W_n} \sum_{i=1}^{L} \sum_{\nu=1}^{K_{i, n}} 
		\exp\Big(- \Big[\frac{\sigma_{\max}^2}{\sigma^2(\xi_{\nu, i, n})} - 1 \Big] \frac{\ell_n^2}{2}\Big) \exp\Big(- \Big[\frac{\sigma_{\max}^2}{\sigma^2(\xi_{\nu, i, n})} - 1 \Big]  (\log W_n + x) \Big)  \\
		& \ge \frac{1}{W_n} \sum_{(i, \nu): \xi_{\nu, i, n} \in M(\eps_n)} 
		\exp\Big(- \Big[\frac{\sigma_{\max}^2}{\sigma^2(\xi_{\nu, i, n})} - 1 \Big] \frac{\ell_n^2}{2}\Big) \exp\Big(- \Big[\frac{\sigma_{\max}^2}{\sigma^2(\xi_{\nu, i, n})} - 1 \Big]  (\log W_n + x) \Big)  \\
		& \ge \frac{1}{W_n} \sum_{(i, \nu): \xi_{\nu, i, n} \in M(\eps_n)} 
		\exp\Big(- \Big[\frac{\sigma_{\max}^2}{\sigma^2(\xi_{\nu, i, n})} - 1 \Big] \frac{\ell_n^2}{2}\Big) \exp(- \eps_n (\log W_n + x) )  \\
		& = 1 + o(1),
	\end{align*}
	since $\exp(- \eps_n \log W_n ) \to 1$. Analogously to \eqref{eq:log_prob}, we obtain the upper bound $\log(\Pc_0) \le - \exp(-x) + o(1)$, and hence, equality.
\end{proof}

\begin{proposition} \label{prop:sequences}
	Let the assumptions of Theorem \ref{thm:localized_gumbel} be satisfied. 
	\begin{enumerate}
		\item For any $n\in\N$, $\rho_n W_n(\Ac_n) \le 1$.
		\item If additionally Assumption \ref{assump:regularity_sigma} (1) is satisfied, $C \le \rho_n W_n(\Ac_n) \le 1$, for some constant $C > 0, N\in\N$ and any $n \ge N$. In particular, $W_n^{(c)}(\Ac_n) \to \infty$ as $n\to\infty$.  
		\item If additionally Assumption \ref{assump:regularity_sigma} (2) is satisfied, $C \ell_n^{-2} \le \rho_n W_n(\Ac_n) \le 1$, for some constant $C > 0, N\in\N$ and any $n \ge N$. In particular, $W_n^{(c)}(\Ac_n) \to \infty$ as $n\to\infty$.  
		\item If Assumption \ref{assump:regularity_sigma} (1) or (2) is satisfied, a sequence $(\eps_n)_{n\in\N}$ exists with $\eps_n \log W_n(\Ac_n) \to 0$, such that
		\begin{equation*}
			\frac{1}{W_n(\Ac_n)} \sum_{(i, \nu) \in N(\eps_n)} \exp\Big(- \Big[\frac{\sigma_{\max}(\Ac_n)^2}{\sigma^2(\xi_{\nu, i, n})} - 1 \Big] \frac{\ell_n^2}{2}\Big) \to 1,
		\end{equation*}
		where $N(\delta) = \{(i, \nu): \sigma^2(\xi_{\nu, i, n}) \ge (1- \delta) \sigma_{\max}^2(\Ac_n)\}$.
	\end{enumerate}
\end{proposition}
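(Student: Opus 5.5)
The four parts of Proposition \ref{prop:sequences} all come from the explicit structure of
\begin{equation*}
W_n(\Ac_n)=\sum_{i=1}^{L}\sum_{\nu=1}^{K_{i,n}} w_{\nu,i,n}, \qquad w_{\nu,i,n}=\exp\Big(-\tfrac{\ell_n^2}{2}\big[\sigma_{\max}^2/\sigma^2(\xi_{\nu,i,n})-1\big]\Big)\in(0,1].
\end{equation*}
The plan is to bound the number of summands and then, for the lower bounds, to exhibit enough summands whose weights stay bounded below.

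For part 1, each weight is at most one, so $W_n\le \sum_i K_{i,n}\le \lambda(\Ac_n)/(\rho_n+2h_n)\le 1/\rho_n$, because $\Ac_n\subset[0,1]$. Strictly, a finite-$L$ overhead could appear, but since $K_{i,n}$ uses the floor function the bound $\rho_n W_n\le 1$ is exact.

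For part 2, Assumption \ref{assump:regularity_sigma} (1) provides a non-degenerate interval $I\subset[a_i,b_i]\subset[a_{i,n},b_{i,n}]$ on which $\sigma=\sigma_{\max}(\Ac)$. I first need $\sigma_{\max}(\Ac_n)=\sigma_{\max}(\Ac)+o(\ell_n^{-2})$, which holds because $\Ac_n$ shrinks to $\Ac$ with shell width controlled as in Section \ref{sec:localized_test}. Alternatively, one can assume the maximum over $\Ac_n$ is attained on $I$ for large $n$. Then every block with $\xi_{\nu,i,n}\in I$ has $\ell_n^2[\sigma_{\max}^2/\sigma^2-1]=o(1)$, so its weight is $1+o(1)$. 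The number of such blocks is at least $\lambda(I)/(\rho_n+2h_n)-2\ge c/\rho_n$, which gives $\rho_nW_n\ge C$. Since $\log W_n\to\infty$, the representation \eqref{eq:representation_Wnc} yields $\log W_n^{(c)}=2\log W_n/(\sqrt{1+2\log W_n/\ell_n^2}+1)\to\infty$, using that $\log W_n/\ell_n^2$ stays bounded (shown in the discussion after Theorem \ref{thm:localized_gumbel}).

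For part 3, I take a maximiser $t^*$ of $\sigma$ on $\Ac_n$ and use Lipschitz continuity: for $|t-t^*|\le \ell_n^{-2}$ we have $\sigma_{\max}^2/\sigma^2(t)-1\le C\ell_n^{-2}$, so the weights there are bounded below by $e^{-C/2}$. The condition $\ell_n^{-2}\le\theta_n$ guarantees that a one-sided neighbourhood of length of order $\ell_n^{-2}$ around $t^*$ lies inside the component of $\Ac_n$ containing $t^*$. Because $\rho_n\ell_n^2\to 0$ (which follows from $\rho_n|\log h_n|\to0$), this neighbourhood contains at least $c\,\ell_n^{-2}/\rho_n$ block starting points, giving $\rho_nW_n\ge C\ell_n^{-2}$. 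For $W_n^{(c)}\to\infty$ it suffices that $\log W_n\ge \log(1/\rho_n)-2\log\ell_n\to\infty$. This uses $\log\ell_n\asymp\log|\log h_n|=o(|\log\rho_n|)$, which is exactly what the condition $\log|\log h_n|\,|\log\rho_n|/|\log h_n|\to0$ is for.

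For part 4, I split the sum into $N(\eps_n)$ and its complement. Off $N(\eps_n)$ we have $\sigma_{\max}^2/\sigma^2-1\ge\eps_n$, so each weight is at most $e^{-\eps_n\ell_n^2/2}$, and the complement contributes at most $\rho_n^{-1}e^{-\eps_n\ell_n^2/2}$. Dividing by the lower bounds from parts 2 and 3 leaves the ratio $C\ell_n^2e^{-\eps_n\ell_n^2/2}$. This is the main obstacle: I need $\eps_n$ with $\eps_n\ell_n^2\gg\log\ell_n$ and simultaneously $\eps_n\log W_n\to0$. Since $\log W_n\le\log(1/\rho_n)$, I would choose $\eps_n=\ell_n^{-2}\log^2\ell_n$. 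The first requirement then holds, and the second reduces to $\log^2\ell_n\,|\log\rho_n|/\ell_n^2\to0$, which is again precisely the rate in Assumption \ref{assump:regularity_sigma} (2). Under part (1), $\log W_n\asymp|\log\rho_n|$ while the polynomial choices of $h_n$ and $\rho_n$ keep $|\log\rho_n|=O(\ell_n^2)$. This is the delicate point, and I would first check it using the example rates $h_n\sim n^{-1/3-\alpha}$ given after Theorem \ref{thm:localized_gumbel}. If that fails, a fallback is to use the regularity of the level set $\{\sigma=\sigma_{\max}\}$ to show directly that the contribution of blocks with $\sigma^2\le(1-\eps_n)\sigma_{\max}^2$ is of smaller order.
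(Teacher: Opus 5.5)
Parts 1--3 follow the paper's argument. In part 1 the weights are bounded by one and there are at most $\rho_n^{-1}$ blocks. In part 2 there are order $\lambda(I)/\rho_n$ blocks of weight one on $I$. In part 3 there are order $\ell_n^{-2}/\rho_n$ blocks of weight at least $e^{-C/2}$ near a maximiser, using $\ell_n^{-2}\le\theta_n$ and $\rho_n\ell_n^2\to0$.

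Two small corrections to these parts:
\begin{itemize}
\item In part 3, $W_n\to\infty$ follows directly from $W_n\ge C/(\rho_n\ell_n^2)$ and $\rho_n\ell_n^2\to0$. The condition $\log|\log h_n|\,|\log\rho_n|/|\log h_n|\to0$ is an \emph{upper} bound on $|\log\rho_n|$ and does not give $\log|\log h_n|=o(|\log\rho_n|)$.
\item In part 2, the shell-width property of $\Ac_n$ you invoke is not part of Assumption \ref{assump:sets}. What you actually use, as the paper does implicitly, is that $\sigma_{\max}(\Ac_n)$ is the value attained on $I$.
\end{itemize}

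The genuine gap is part 4, in both cases.

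Under Assumption \ref{assump:regularity_sigma} (2), your choice $\eps_n=\ell_n^{-2}\log^2\ell_n$ needs $(\log\ell_n)^2|\log\rho_n|/\ell_n^2\to0$. We have $\ell_n^2\asymp|\log h_n|$ (by $h_n\le\rho_n^{1+\eta}$) and $\log\ell_n\asymp\log|\log h_n|$. So the assumption only supplies this with the first power of $\log\ell_n$, and your choice fails, e.g.\ for $|\log\rho_n|=|\log h_n|/(\log|\log h_n|)^{3/2}$. Your demand $\eps_n\ell_n^2\gg\log\ell_n$ is stronger than needed. Take instead $\eps_n=(4+\eps)\log\ell_n/\ell_n^2$, which is the paper's choice:
\begin{itemize}
\item every weight off $N(\eps_n)$ is at most $\ell_n^{-2-\eps/2}$;
\item by $\rho_nW_n\ge C\ell_n^{-2}$, these blocks contribute $O(\ell_n^{-\eps/2})$ relative to $W_n$;
\item $\eps_n\log W_n\le\eps_n|\log\rho_n|\to0$ is then exactly the assumed rate.
\end{itemize}

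Under Assumption \ref{assump:regularity_sigma} (1), an argument based on weight decay alone requires $\ell_n^{-2}\ll\eps_n\ll1/\log W_n\asymp1/|\log\rho_n|$. That is, it needs $|\log\rho_n|=o(\ell_n^2)$, and $O(\ell_n^2)$ is not enough. For the paper's own example $h_n\sim n^{-1/3-\alpha}$, $\rho_n=(nh_n)^{-1/2}$, one gets $|\log\rho_n|/\ell_n^2\to(1/3-\alpha/2)/(3\alpha)>1/2$. So no such $\eps_n$ exists, and your main route fails in exactly the regime this case is meant for.

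What is needed is your fallback, carried out as in the paper:
\begin{itemize}
\item Take $\eps_n=\delta_n/\log W_n$ with $\delta_n\to0$, so $\eps_n\log W_n\to0$ is automatic.
\item Blocks outside $N(d_n)$, with $d_n=e_n/\ell_n^2$ and $e_n\to\infty$, have weight at most $e^{-e_n/2}$, and there are at most $\rho_n^{-1}$ of them.
\item The intermediate blocks $N(d_n)\setminus N(\eps_n)\subset N(d_n)\setminus N(0)$ are controlled by \emph{counting}, not by their weights. The near-maximal level sets of $\sigma$ shrink in measure to $\{\sigma=\sigma_{\max}\}$, a finite union of intervals and points, so there are only $o(\rho_n^{-1})$ such blocks.
\item Meanwhile $N(0)$ alone contributes at least $C\rho_n^{-1}$ with unit weights.
\end{itemize}
This counting step is the essential argument, not an optional backup.
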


\begin{proof}
	Throughout the proof, denote $\sigma_{\max} = \sigma_{\max}(\Ac_n)$ and $W_n = W_n(\Ac_n)$ for the sake of readability.
	\begin{enumerate}
		\item Each summand in the definition of $W_n(\Ac_n)$ is bounded from above by $1$. The number of summands is bounded by $\lfloor \rho_n^{-1}\rfloor$, hence $W_n(\Ac_n) \rho_n \le 1$. 
		
		\item By Assumption \ref{assump:regularity_sigma} (1), $\sigma(t) = \sigma_{\max}$ for all $t$ in some interval $I \subset [a_i, b_i]$, for $i\in\{1, \dots, L\}, a_i \neq b_i$. The distance between two points $\xi_{\nu_1, i, n}$ and $\xi_{\nu_2, i, n}$ equals $|\nu_1 - \nu_2| (\rho_n + 2 h_n)$, which behaves asymptotically as $|\nu_1 - \nu_2| \rho_n$. Hence, the number of indices $\nu \in \{1, \dots, K_{i, n}\}$, with $\sigma(\xi_{\nu, i, n}) = \sigma_{\max}$ is of order $\lambda(I) / \rho_n$, i.\,e., $|\Kc_n | \ge C \rho_n^{-1}$, for some $C > 0$ and $\Kc_n = \big\{\xi_{\nu, i, n}: \nu \in \{1, \dots, K_{i, n}\} \big\} \cap I$. 
		In particular,
		\begin{align*}
			W_n & \ge \sum_{\nu \in \Kc_n} \exp\bigg(-\frac{\ell_n^2}{2}\bigg[\frac{\sigma_{\max}^2}{\sigma^2(\xi_{\nu, i, n})} - 1\bigg]\bigg) \\
			& \ge C \rho_n^{-1},
		\end{align*}
		such that $W_n \rho_n \ge C$. 
		
		By \eqref{eq:representation_Wnc} and the identity $a - b = (a^2 - b^2)/(a+b)$,
		\begin{equation*}
			\log(W_n^{(c)}) = 2 \log(W_n) \frac{1}{\sqrt{2 \log(W_n)/\ell_n^2 + 1} + 1} \ge C_1 \log(W_n),
		\end{equation*}
		for some $C_1 \ge 0$, which diverges to $\infty$.
		
		\item In the following, let $C_\sigma$ denote the Lipschitz constant of $\sigma^2$ and $U_\eps(x) = (x-\eps, x+\eps)$ the $\eps$-neighborhood of $x\in [0, 1]$. 
		
		By Assumption \ref{assump:regularity_sigma} (2), $\ell_n^{-2} \le \theta_n$. Let $i\in \{1, \dots, L\}$ be an index, such that $t \in [a_i, b_i]$ with $\sigma(t) = \sigma_{\max}$. For any $x \in \Uc_{\ell_n^{-2}}(t)$, it holds $|\sigma^2(x) - \sigma_{\max}^2|\le C_\sigma \ell_n^{-2}$, by Lipschitz continuity of $\sigma^2$, and in particular,
		\begin{equation}\label{eq:asymptotics_wn}
			\Big| \frac{\sigma_{\max}^2}{\sigma^2(x)} - 1 \Big| 
			= \Big| \frac{\sigma_{\max}^2 - \sigma^2(x)}{\sigma^2(x)} \Big| \le \frac{C_\sigma \ell_n^{-2}}{1 - C_\sigma \ell_n^{-2}}.
		\end{equation}
		 As before, the number of indices $\nu \in \{1, \dots, K_{i, n}\}$ within the $\ell_n^{-2}$ neighborhood of $t$ is of order $\rho_n^{-1} \ell_n^{-2}$, i.\,e., $|\Kc_n | \ge C_t \rho_n^{-1} \ell_n^{-2}$, for some $C_t > 0$ and  
		\begin{equation*}
			\Kc_n = \big\{\xi_{\nu, i, n}: \nu \in \{1, \dots, K_{i, n}\} \big\} \cap U_{\ell_n^{-2}}(t).
		\end{equation*}
		Note that $\rho_n \ell_n^2 \to 0$ by assumption. Moreover, by \eqref{eq:asymptotics_wn},
		\begin{align*}
			W_n & \ge \sum_{\nu \in \Kc_n} \exp\bigg(-\frac{\ell_n^2}{2}\bigg[\frac{\sigma_{\max}^2}{\sigma^2(\xi_{\nu, i, n})} - 1\bigg]\bigg) \\
			& \ge C_t \rho_n^{-1} \ell_n^{-2} \exp\bigg(-\frac{C_\sigma}{2(1 - C_\sigma \ell_n^{-2})}\bigg),
		\end{align*}
		such that $W_n \rho_n \ge C \ell_n^{-2}$, for some suitable $C > 0$. By the same arguments as before, $\log(W_n^{(c)})  \to \infty$.
		
		\item If Assumption \ref{assump:regularity_sigma} (1) is satisfied, define $\eps_n = \delta_n / \log W_n$ for some $\delta_n \to 0$. By the arguments of part 2, $|N(\eps_n)| \ge |N(0)| \ge C_N \rho_n^{-1}$, for some constant $C_N > 0$. In particular, 
		\begin{equation*}
			\sum_{(i, \nu) \in N(\eps_n)} \exp\Big(- \Big[\frac{\sigma_{\max}^2}{\sigma^2(\xi_{\nu, i, n})} - 1 \Big] \frac{\ell_n^2}{2}\Big) \ge C_N \rho_n^{-1}.
		\end{equation*}
		Let $d_n = e_n / \ell_n^2$, for some $e_n \to \infty$ with $e_n = o(\ell_n^2)$. For $(i, \nu)\notin N(d_n)$, $\exp(- [\sigma_{\max}^2/\sigma^2(\xi_{\nu, i, n}) - 1] \ell_n^2/2) \le \exp(-e_n/2) = o(1)$. Further, $|N^c(d_n)| \le \rho_n^{-1}$, such that
		\begin{equation*}
			 \sum_{(i, \nu) \notin N(d_n)} \exp\Big(- \Big[\frac{\sigma_{\max}^2}{\sigma^2(\xi_{\nu, i, n})} - 1 \Big] \frac{\ell_n^2}{2}\Big) = o(\rho_n^{-1}),
		\end{equation*}
		By the same arguments as in the proof of Proposition \ref{prop:extremal_set}, $|N(c_n) \setminus N(\eps_n)| = o(\rho_n^{-1})$. Therefore, the sum over all indices $(i, \nu)\notin N(\eps_n)$ is of order $o(\rho_n^{-1})$, hence, negligible compared to the sum over $N(\eps_n)$, and the statement follows.
		
		 If Assumption \ref{assump:regularity_sigma} (2) is satisfied, define $\eps_n = (4 + \eps) \log \ell_n / \ell_n^2$, for some $\eps > 0$. By part 3, 
		 \begin{equation*}
		 	\eps_n \log W_n \le \delta_n \log(\rho_n^{-1}) = (2 + \eps/2) \frac{\log(\ell_n) |\log(\rho_n)|}{\log(\Lambda_2 / (2\pi)) + \log \rho_n - \log h_n},
		 \end{equation*}
		 which converges to $0$, since $\log|\log h_n| \cdot |\log \rho_n|  / |\log h_n| \to 0$ by assumption.
		 Moreover, for $(i, \nu)\notin N(\eps_n)$,  by part 3,
		 \begin{equation*}
		 	\exp\Big(- \Big[\frac{\sigma_{\max}^2}{\sigma^2(\xi_{\nu, i, n})} - 1 \Big] \frac{\ell_n^2}{2} - \log(W_n \rho_n)\Big) 
		 	\le \exp\Big(- \frac{\eps}{2} \log \ell_n - \log(C)\Big) = o(1) 
		 \end{equation*}
		 Since $|N^c(\eps_n)| \le \rho_n^{-1}$, $W_n^{-1} \sum_{(i, \nu) \notin N(\eps_n)} \exp(- [\sigma_{\max}^2/\sigma^2(\xi_{\nu, i, n}) - 1] \ell_n^2/2) = o(1)$.
	\end{enumerate}
	
\end{proof}

\textbf{Proof of Theorem \ref{thm:evt}.}
Let $T > 0$ be arbitrary. We will first verify the conditions of Theorem 2.2 of \cite{debicki2017} in order to derive
\begin{equation} \label{eq:conv_single_sum}
	\lim_{n\to\infty} \sup_{k=1}^{K_n} \bigg| \frac{\pr(\sup_{t\in[0, 1]} X(q(g_{n,k}) T t) > g_{n,k})}{\Psi(g_{n,k})} - H_2([0, T]) \bigg| = 0,
\end{equation}
where $q(g_{n,k}) = C_0^{-1/2} g_{n,k}^{-1}$ and $H_2([0, T]) = \ex[\exp(\sup_{t\in[0, T]} \sqrt{2}Rt - t^2)]$ is the general Pickand constant, for $R\sim\Nc(0, 1)$. Since $\Gamma$ corresponds to the supremum, (F1) and (F2) are trivially satisfied. Note that $g_{n,k} > \ell_n \to \infty$, so that (C0) is satisfied. Moreover $\var(X(t)) = \var(X(0)) = 1$ by stationarity of $X$, which implies (D1) with $h \equiv 0$.
	
To verify (D2) and (D3), first define $r(t) = \cov\big(X(t), X(0)\big)$ with derivatives $r'(t) = - \|K\|_2^{-2} \int_\R K(x) K'(x-t)\diff x$ and $r''(t) = \|K\|_2^{-2} \int_\R K(x) K''(x-t)\diff x$. By a Taylor expansion, $r(t) = 1 - C_0 t^2 + o(t^2)$, as $t \to 0$, or equivalently, $r(t) = 1 - C_0 t^2 (1 + \delta(t))$, for some function $\delta: \R\to\R$ with $\delta(t) = o(1)$, as $t\to 0$.
For the variance of differences,
\begin{align*}
	\var\big( X(q(g_{n,k}) T t) - X(q(g_{n,k}) T s) \big)
	& = 2 \big(1 - \cov[X(q(g_{n,k}) T t), X(q(g_{n,k}) T s)]\big) \\
	& = 2 \big(1 - r[q(g_{n,k})T(t-s)]\big)
\end{align*}
Moreover, $\delta(q(g_{n,k})T(t-s)) = o(1)$, uniformly in $k$ as $n\to\infty$, since $\ell_n < \inf_k g_{n,k}$, and hence,
\begin{align*}
	g_{n,k}^2 \var\big( X(q(g_{n,k})Tt) - X(q(g_{n,k})Ts) \big)
	& = 2 g_{n,k}^2 C_0 q^2(g_{n,k})T^2(t-s)^2 (1 + \delta(q(g_{n,k})T(t-s))) \\
	& = 2 T^2(t-s)^2 (1 + \delta(q(g_{n,k})T(t-s))) \\
	& = 2 T^2(t-s)^2 + o(1),
\end{align*}
uniformly for $s, t\in[0, 1]$ and $k\in \{1, \dots, K_n\}$, implying (D2), where the Gaussian process $\eta$ can be chosen as $\eta(t) = R T t$, for a standard normal random variable $R$. Finally, note that $g_{n,k}^2 \var\big( X(q(g_{n,k})Tt) - X(q(g_{n,k})Ts) \big) = 2 T^2 (t-s)^2 (1 + o(1)) \le C(t-s)^2$, uniformly in $k$, for some $C\ge 2$, implying (D3). Hence the conditions of Theorem 2.2 of \cite{debicki2017} are satisfied and \eqref{eq:conv_single_sum} follows.

With this uniform bound, the theorem's statement follows along the lines of Pickand's double sum method \citep[see Theorem D.2 in][]{piterbarg1996}. More specifically, define the blocks $B_\nu = [0, \rho_n/h_n] \cap [(\nu-1) T q(g_{n,k}), \nu T q(g_{n,k})]$, for $\nu\in\N$, and $b_n = \lfloor (\rho_n/h_n) T^{-1} q^{-1}(g_{n, k})\rfloor$. By the union bound and \eqref{eq:conv_single_sum},
\begin{align} \notag
	\pr\Big(\sup_{t\in[0, \rho_n/h_n]} X(t) > g_{n,k}\Big) 
	& \le \sum_{\nu = 1}^{b_n} \pr\Big(\sup_{t\in B_\nu} X(t) > g_{n,k}\Big) \\
	& = b_n \Psi(g_{n,k}) (H_2([0, T]) + o(1)) \label{eq:upper_bound_single_sum} \\
	& = C_0^{1/2} \frac{\rho_n}{h_n} g_{n, k} \Psi(g_{n,k}) T^{-1} (H_2([0, T]) + o(1)), \notag
\end{align}
uniformly in $k$, as $n\to\infty$. Contrarily, by the Bonferroni inequality,
\begin{align} \label{eq:lower_bound_double_sum}
	\pr\Big(\sup_{t\in[0, \rho_n/h_n]} X(t) > g_{n,k}\Big) 
	& \ge \sum_{\nu = 1}^{b_n} \pr\Big(\sup_{t\in B_\nu} X(t) > g_{n,k}\Big)  \\
	& -\sum_{1 \le \nu_1 < \nu_2 \le b_n} \pr\Big(\sup_{t\in B_{\nu_1}} X(t) > g_{n,k}, \sup_{t\in B_{\nu_2}} X(t) > g_{n,k}\Big). \notag
\end{align}
By stationarity of $X$, we may rewrite the double sum on the right-hand side as
\begin{equation} \label{eq:double_sum_split}
	\sum_{\nu=1}^{b_n} (b_n - \nu) p_\nu = (b_n - 1) p_1 + \sum_{\nu=2}^{u_1} (b_n - \nu) p_\nu + \sum_{\nu=u_1 + 1}^{u_2} (b_n - \nu) p_\nu + \sum_{\nu=u_2+1}^{b_n} (b_n - \nu) p_\nu.
\end{equation}
for $p_\nu = \pr\big(\sup_{t\in B_0} X(t) > g_{n,k}, \sup_{t\in B_\nu} X(t) > g_{n,k}\big)$, $u_1 = \lfloor (\eps/8) C_0^{1/2} g_{n,k} /T \rfloor + 1$ and $u_2 = \lfloor 2 C_0^{1/2} g_{n,k} / T \rfloor + 1$, where $\eps \in (0, \min\{1/2, T\})$ is chosen such that $\eta:= \inf_{s \ge \eps / 8} 1 - r(s) > 0$. 
Similar to the proof of Theorem D.2 by \cite{piterbarg1996}, each sum on the right-hand side must be treated differently. First, note that the distance between the sets $B_0$ and $B_\nu$ equals $(\nu-1) T q(g_{n, k}) = (\nu-1) T C_0^{-1/2} g_{n,k}^{-1}$, which is larger than $2$ whenever $\nu \ge u_2 + 1$. Since $r(t)=0$ for $t > 2$, the two blocks are independent and
\begin{multline*}
	\sum_{\nu=u_2+1}^{b_n} (b_n - \nu) p_\nu
	= \sum_{\nu=u_2+1}^{b_n} (b_n - \nu)  \pr\big(\sup_{t\in B_0} X(t) > g_{n,k}\big)^2 \\
	\le b_n^2 \Psi(g_{n,k})^2 (H_2([0, T]) + o(1))^2 
	\le \bigg(\frac{H_2([0, T])}{T} + o\Big(\frac{1}{T}\Big)\bigg)^2 C_0 \bigg(\frac{\rho_n}{h_n} g_{n, k} \Psi(g_{n,k})\bigg)^2,
\end{multline*}
by \eqref{eq:conv_single_sum}. Let $f(x) \sim g(x)$ denote asymptotic equality, i.\,e., $f(x)/g(x)\to 1$ as $x\to\infty$. Then $\Psi(x) \sim (2\pi)^{-1/2} x^{-1}\exp(-x^2/2)$, such that
\begin{multline*}
	\frac{\rho_n}{h_n} g_{n, k} \Psi(g_{n,k})
	\sim \frac{1}{\sqrt{2\pi}} \frac{\rho_n}{h_n} \exp\bigg(-\frac{g_{n,k}^2}{2}\bigg) \\
	= \frac{\sqrt{2\pi}}{\Lambda_2} \exp\bigg(- \frac{(c_{k,n}^2 - 1) \ell_n^2}{2}\bigg)
	\le \frac{\sqrt{2\pi}}{\Lambda_2} \exp\bigg(- \frac{(\inf_{k=1}^{K_n} c_{k,n}^2 - 1) \ell_n^2}{2}\bigg)
\end{multline*}
which vanishes as $n \to \infty$, since $(\inf_k c_{k,n}^2 - 1) \ell_n^2 \to\infty$, by assumption. Therefore, $\sum_{\nu=u_2+1}^{b_n} (b_n - \nu) p_\nu = o\big((\rho_n/h_n) g_{n, k} \Psi(g_{n,k})\big)$. 

For $\nu \in \{ u_1 + 1, \dots, u_2\}$, the distance between $B_0$ and $B_\nu$ is larger than $(\nu-1) T C_0^{-1/2} g_{n,k}^{-1} \ge \eps / 8$. By the same arguments leading to (D.12) in the proof of Theorem D.2 by \cite{piterbarg1996}, it holds 
\begin{equation*}
	\sum_{\nu=u_1 + 1}^{u_2} (b_n - \nu) p_\nu 
	\le \frac{4 C_0^{1/2}}{T}g_{n,k} b_n \Psi\Big(\frac{g_{n,k} - a/2}{1 - \eta/4}\Big)
	\le \frac{4 C_0}{T^2}g_{n,k}^2 \frac{\rho_n}{h_n}  \Psi\Big(\frac{g_{n,k} - a/2}{1 - \eta/4}\Big),
\end{equation*}
for some $a \in \R$, uniformly in $k$. Note that uniformity of the bound on $p_\nu$ in $\nu$ follows by the arguments leading to (D.12), whereas uniformity in $k$ follows from the application of Borell's theorem \citep[cf. Theorem D.1 in][]{piterbarg1996}. By definition of $\eta$, $\eta_0 := (1-\eta/4)^{-2} - 1 > 0$. In particular,
\begin{align*}
	g_{n,k}  \Psi\Big(\frac{g_{n, k} - a/2}{1 - \eta/4}\Big) / \Psi(g_{n, k})
	& \sim (1 - \eta/4) g_{n, k} \exp\bigg(-\frac{(g_{n, k} - a/2)^2}{2(1 - \eta/4)^2} + \frac{g_{n,k}^2}{2}\bigg) \\
	& \le C_1 \exp\bigg(-\frac{\eta_0}{2}g_{n, k}^2  + \frac{g_{n,k}  a}{2(1 - \eta/4)^2} + 2 \log(g_{n, k}^2)\bigg),
\end{align*}
for some constant $C_1 \ge 0$. The exponent is dominated by the term $\eta_0 g_{n, k}^2 / 2$, so that the right-hand side converges to $0$ and $\sum_{\nu=u_1 + 1}^{u_2} (b_n - \nu) p_\nu  = o\big((\rho_n/h_n) g_{n, k} \Psi(g_{n,k})\big)$.

For $n$ sufficiently large,
\begin{equation*}
	g_{n, k} 
	> \frac{C_0^{1/2}}{4} g_{n, k} + \frac{4 T}{\eps} 
	= 2 \Big(\frac{\eps}{8} T^{-1} C_0^{1/2} g_{n, k} + 2 \Big) \frac{T}{\eps}
	> 2 (u_1 + 1) \frac{T}{\eps}
\end{equation*}
since $C_0 < 16$ by assumption, and in particular $g_{n, k} > 2 (\nu + 1) T / \eps$ for any $\nu \in \{2, \dots, u_1\}$. Hence, we can apply Lemma D.2 of \cite{piterbarg1996}, such that
\begin{align*}
	\frac{\sum_{\nu=2}^{u_1} (b_n - \nu) p_\nu}{(\rho_n/h_n) g_{n, k} \Psi(g_{n,k})}
	& \le \frac{\sum_{\nu=2}^{u_1} (b_n - \nu) h \Psi(g_{n, k}) \exp(-1/8 (\nu - 1)^2 C_0^{-1}T^2)}{(\rho_n/h_n) g_{n, k} \Psi(g_{n,k})} \\
	& \le h C_0^{1/2} T^{-1} \sum_{\nu=2}^{u_1} \exp(-1/8 (\nu - 1)^2 C_0^{-1}T^2) \\
	& \le C_2 T^{-1} \exp(-T^2/(8 C_0))
\end{align*}
uniformly in $k$, for some constant $C_2 \ge 0$. 
Finally, consider the single term $(b_n - 1) p_1$. By the same arguments leading to (D.14) in \cite{piterbarg1996}, it holds 
\begin{equation*}
	\limsup_{n\to\infty} \frac{(b_n-1) p_1}{b_n \Psi(g_{n, k})} \le h \exp( - T / 8 ) + H_2([0, 1])\sqrt{T}.
\end{equation*}
Combining \eqref{eq:upper_bound_single_sum}, \eqref{eq:lower_bound_double_sum} and the bounds on the different terms of \eqref{eq:double_sum_split} yields
\begin{align*}
	\frac{H_2([0, T])}{T}
	& \ge \lim_{n\to\infty} \frac{\pr(\sup_{t\in[0, \rho_n/h_n]} X(t) > g_{n,k})}{C_0^{1/2} \frac{\rho_n}{h_n} g_{n, k} \Psi(g_{n,k})} \\
	& \ge \frac{H_2([0, T])}{T} - \frac{C_2}{T} \exp\bigg(- \frac{T^2}{8C_0}\bigg) - \frac{h}{T} \exp( - T / 8 ) - \frac{H_2([0, 1])}{\sqrt{T}}.
\end{align*}
Since $T > 0$ was arbitrary, letting $T \to \infty$ finishes the proof. \hfill $\blacksquare$

%% file: app_empirical_results.tex
\section{Additional Empirical Results} \label{app:appendix_empirical_results}

In addition to the results from Section \ref{sec:sim_study}, the following tables contain empirical rejection rates and estimated onset locations, averaged over all time series with rejected null hypothesis. All tests use a Gaussian calibration based on $1000$ simulated Gaussian approximations and a level $\alpha = 5\%$.

Complementing the examples from Section \ref{sec:sim_study}, we used
\begin{equation*}
	\mu_5(t) = 1 - S(t/a), \qquad 
	\mu_6(t) = 1 - S(t/b) + \gamma S\bigg(\frac{t-c}{1-c}\bigg),
\end{equation*}
for $t\in [0, 1]$, with $a=0.35$, $b=0.3$, $c=0.55$, and $\gamma=0.2$. These functions are displayed jointly with their derivatives in Figure \ref{fig:mu_appendix}.
\begin{figure}[bh]
	\centering
	\includegraphics[width=\textwidth]{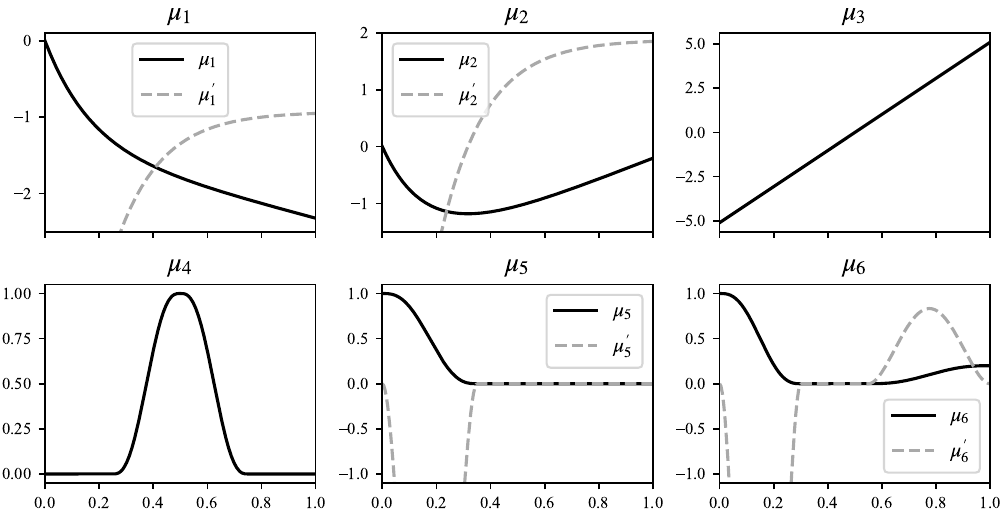}
	\caption{Various mean functions, used to generate time series.}
	\label{fig:mu_appendix}
\end{figure}
In addition to $\sigma_2(x) = 1 / 2 - \cos(2 \pi x) / 4$, we considered 
\begin{equation*}
	\sigma_0(x) = \tfrac{1}{2}, \qquad \sigma_1(x) = \tfrac{1}{4} + \tfrac{x}{2}, \qquad \sigma_3(x) = \tfrac{1}{4} + \tfrac{1}{2} \id(x \ge 1/2).
\end{equation*}
Finally, as error processes, we considered i.i.d. random variables, moving average and autoregressive processes, i.\,e., 
\begin{equation*}
	(\mathrm{iid})~ \eps_i = \eta_i, \qquad (\mathrm{ma})~ \eps_i = \tfrac{2}{\sqrt{5}}(\eta_i + \tfrac{1}{2} \eta_{i-1}), \qquad (\mathrm{ar})~ \eps_i = \tfrac{\sqrt{3}}{2}(\eta_i + \tfrac{1}{2} \eps_{i-1}),
\end{equation*}
for $(\eta_i)_{i\in\Z}$ with $\eta_i\sim\Nc(0, 1)$ i.i.d.

\begin{table}
	\caption{Empirical rejection rates and average estimated onset locations for $\delta = 0$ with $\sigma = \sigma_2$ and locally stationary $\eps$. }
	\begin{tabular}{l|rrrrr|rrrrrr}
		\toprule
		& \multicolumn{5}{c|}{Empirical Rejection Rate} & \multicolumn{6}{c}{Estimated Onset Location} \\
		$n \quad \setminus \quad \Delta$ & $0.8$ & $0.9$ & $1.0$ & $1.1$ & $1.2$ & $0.8$ & $0.9$ & $1.0$ & $1.1$ & $1.2$ & $\hat{\tau}_*$ \\
		\midrule
		\multicolumn{12}{l}{\textit{Panel A: $\mu = \mu_1$}}\\	
		200  & 0.404 & 0.445 & 0.486 & 0.513 & 0.547 & 0.531 & 0.503 & 0.481 & 0.461 & 0.433 & 0.655 \\
		500  & 0.370 & 0.401 & 0.432 & 0.460 & 0.478 & 0.504 & 0.478 & 0.452 & 0.428 & 0.409 & 0.643 \\
		1000 & 0.379 & 0.406 & 0.432 & 0.469 & 0.495 & 0.478 & 0.452 & 0.428 & 0.398 & 0.377 & 0.657 \\
		\midrule
		\multicolumn{12}{l}{\textit{Panel B: $\mu = \mu_2$}}\\		
		200  & 0.000 & 0.000 & 0.000 & 0.000 & 0.000 & -- & -- & -- & -- & -- & 0.317 \\
		500  & 0.000 & 0.000 & 0.000 & 0.000 & 0.000 & -- & -- & -- & -- & -- & 0.317 \\
		1000 & 0.000 & 0.000 & 0.000 & 0.000 & 0.000 & -- & -- & -- & -- & -- & 0.317 \\
		\midrule
		\multicolumn{12}{l}{\textit{Panel C: $\mu = \mu_3$}}\\	
		200  & 0.000 & 0.000 & 0.000 & 0.000 & 0.000 & -- & -- & -- & -- & -- & 0.500  \\
		500  & 0.000 & 0.000 & 0.000 & 0.000 & 0.000 & -- & -- & -- & -- & -- & 0.500  \\
		1000 & 0.000 & 0.000 & 0.000 & 0.000 & 0.000 & -- & -- & -- & -- & -- & 0.500  \\
		\midrule
		\midrule
		$n \quad \setminus \quad \Delta$ & $0.1$ & $0.2$ & $0.3$ & $0.4$ & $0.5$ & $0.1$ & $0.2$ & $0.3$ & $0.4$ & $0.5$ & \\
		\midrule
		\multicolumn{12}{l}{\textit{Panel D: $\mu = \mu_4$}}\\
		200  & 0.000 & 0.005 & 0.418 & 0.641 & 0.762 & -- & 0.112 & 0.028 & 0.003 & 0.000 & 0.534 \\
		500  & 0.000 & 0.306 & 0.596 & 0.735 & 0.828 & -- & 0.171 & 0.016 & 0.002 & 0.000 & 0.525 \\
		1000 & 0.000 & 0.524 & 0.733 & 0.841 & 0.934 & -- & 0.162 & 0.011 & 0.001 & 0.000 & 0.518 \\
		\midrule
		\multicolumn{12}{l}{\textit{Panel E: $\mu = \mu_5$}}\\
		200  & 0.000 & 0.002 & 0.141 & 0.247 & 0.331 & -- & 0.480 & 0.407 & 0.329 & 0.283 & 0.568 \\
		500  & 0.000 & 0.034 & 0.124 & 0.227 & 0.300 & -- & 0.459 & 0.346 & 0.266 & 0.236 & 0.556 \\
		1000 & 0.000 & 0.023 & 0.109 & 0.216 & 0.281 & -- & 0.459 & 0.350 & 0.275 & 0.238 & 0.570 \\
		\midrule
		\multicolumn{12}{l}{\textit{Panel F: $\mu = \mu_6$}}\\
		200  & 0.000 & 0.002 & 0.129 & 0.207 & 0.283 & -- & 0.520 & 0.388 & 0.334 & 0.288 & 0.545 \\
		500  & 0.000 & 0.019 & 0.095 & 0.175 & 0.237 & -- & 0.413 & 0.369 & 0.310 & 0.259 & 0.534 \\
		1000 & 0.000 & 0.016 & 0.086 & 0.171 & 0.249 & -- & 0.406 & 0.345 & 0.260 & 0.220 & 0.555 \\
		\bottomrule
	\end{tabular}
\end{table}

\begin{table}
	\caption{Empirical rejection rates and average estimated onset locations for $\delta = 0.1$ with $\sigma = \sigma_2$ and locally stationary $\eps$.}
	\begin{tabular}{l|rrrrr|rrrrrr}
		\toprule
		& \multicolumn{5}{c|}{Empirical Rejection Rate} & \multicolumn{6}{c}{Estimated Onset Location} \\
		$n \quad \setminus \quad \Delta$ & $0.8$ & $0.9$ & $1.0$ & $1.1$ & $1.2$ & $0.8$ & $0.9$ & $1.0$ & $1.1$ & $1.2$ & $\hat{\tau}_*$ \\\midrule
		\multicolumn{12}{l}{\textit{Panel A: $\mu = \mu_1$}}\\	
		200  & 0.153 & 0.192 & 0.225 & 0.257 & 0.287 & 0.636 & 0.598 & 0.580 & 0.565 & 0.548 & 0.559 \\
		500  & 0.096 & 0.141 & 0.180 & 0.214 & 0.240 & 0.628 & 0.610 & 0.591 & 0.563 & 0.544 & 0.528 \\
		1000 & 0.073 & 0.107 & 0.145 & 0.183 & 0.218 & 0.656 & 0.631 & 0.606 & 0.583 & 0.562 & 0.549 \\
		\midrule
		\multicolumn{12}{l}{\textit{Panel B: $\mu = \mu_2$}}\\	
		200  & 0.005 & 0.013 & 0.020 & 0.038 & 0.058 & 0.286 & 0.304 & 0.297 & 0.298 & 0.294 & 0.386 \\
		500  & 0.009 & 0.017 & 0.020 & 0.034 & 0.051 & 0.277 & 0.271 & 0.268 & 0.267 & 0.270 & 0.339 \\
		1000 & 0.003 & 0.008 & 0.014 & 0.020 & 0.032 & 0.268 & 0.275 & 0.269 & 0.265 & 0.265 & 0.294 \\
		\midrule
		\multicolumn{12}{l}{\textit{Panel C: $\mu = \mu_3$}}\\	
		200  & 0.000 & 0.000 & 0.000 & 0.001 & 0.022 & -- & -- & -- & 0.432 & 0.435 & 0.452 \\
		500  & 0.000 & 0.000 & 0.005 & 0.016 & 0.148 & -- & -- & 0.441 & 0.439 & 0.438 & 0.451 \\
		1000 & 0.000 & 0.001 & 0.002 & 0.040 & 0.298 & -- & 0.435 & 0.433 & 0.441 & 0.439 & 0.450 \\
		\midrule
		\midrule
		$n \quad \setminus \quad \Delta$ & $0.1$ & $0.2$ & $0.3$ & $0.4$ & $0.5$ & $0.1$ & $0.2$ & $0.3$ & $0.4$ & $0.5$ & \\
		\midrule
		\multicolumn{12}{l}{\textit{Panel D: $\mu = \mu_4$}}\\
		200  & 0.000 & 0.000 & 0.013 & 0.157 & 0.464 & -- & -- & 0.034 & 0.003 & 0.000 & 0.475 \\
		500  & 0.000 & 0.001 & 0.043 & 0.348 & 0.600 & -- & 0.747 & 0.045 & 0.002 & 0.000 & 0.439 \\
		1000 & 0.000 & 0.000 & 0.084 & 0.472 & 0.687 & -- & -- & 0.015 & 0.001 & 0.000 & 0.435 \\
		\midrule
		\multicolumn{12}{l}{\textit{Panel E: $\mu = \mu_5$}}\\
		200  & 0.000 & 0.000 & 0.002 & 0.027 & 0.078 & -- & -- & 0.430 & 0.493 & 0.505 & 0.467 \\
		500  & 0.000 & 0.000 & 0.010 & 0.036 & 0.062 & -- & -- & 0.533 & 0.529 & 0.514 & 0.431 \\
		1000 & 0.000 & 0.000 & 0.010 & 0.036 & 0.066 & -- & -- & 0.575 & 0.518 & 0.499 & 0.454 \\
		\midrule
		\multicolumn{12}{l}{\textit{Panel F: $\mu = \mu_6$}}\\
		200  & 0.000 & 0.000 & 0.001 & 0.020 & 0.046 & -- & -- & 0.352 & 0.468 & 0.478 & 0.478 \\
		500  & 0.000 & 0.000 & 0.002 & 0.019 & 0.047 & -- & -- & 0.488 & 0.426 & 0.435 & 0.421 \\
		1000 & 0.000 & 0.000 & 0.004 & 0.011 & 0.038 & -- & -- & 0.539 & 0.440 & 0.430 & 0.461 \\
		\bottomrule
	\end{tabular}
\end{table}

\begin{table}
	\caption{Empirical rejection rates and average estimated onset locations for $\delta = 1$ with $\sigma = \sigma_2$ and locally stationary $\eps$. }
	\begin{tabular}{l|rrrrr|rrrrrr}
		\toprule
		& \multicolumn{5}{c|}{Empirical Rejection Rate} & \multicolumn{6}{c}{Estimated Onset Location} \\
		$n \quad \setminus \quad \Delta$ & $0.8$ & $0.9$ & $1.0$ & $1.1$ & $1.2$ & $0.8$ & $0.9$ & $1.0$ & $1.1$ & $1.2$ & $\hat{\tau}_*$ \\\midrule
		\multicolumn{12}{l}{\textit{Panel A: $\mu = \mu_1$}}\\	
		200  & 0.001 & 0.004 & 0.010 & 0.018 & 0.033 & 0.000 & 0.000 & 0.000 & 0.000 & 0.000 & 0.000 \\
		500  & 0.015 & 0.037 & 0.050 & 0.072 & 0.098 & 0.000 & 0.000 & 0.000 & 0.000 & 0.000 & 0.000 \\
		1000 & 0.040 & 0.056 & 0.072 & 0.094 & 0.110 & 0.000 & 0.000 & 0.000 & 0.000 & 0.000 & 0.000 \\\midrule
		\multicolumn{12}{l}{\textit{Panel B: $\mu = \mu_2$}}\\	
		200  & 0.005 & 0.010 & 0.015 & 0.024 & 0.037 & 0.000 & 0.000 & 0.000 & 0.000 & 0.000 & 0.000 \\
		500  & 0.028 & 0.049 & 0.071 & 0.094 & 0.116 & 0.000 & 0.000 & 0.000 & 0.000 & 0.000 & 0.000 \\
		1000 & 0.050 & 0.068 & 0.089 & 0.106 & 0.120 & 0.000 & 0.000 & 0.000 & 0.000 & 0.000 & 0.000 \\
		\midrule
		\multicolumn{12}{l}{\textit{Panel C: $\mu = \mu_3$}}\\	
		200  & 0.000 & 0.000 & 0.001 & 0.023 & 0.150 & -- & -- & 0.000 & 0.000 & 0.000 & 0.000 \\
		500  & 0.000 & 0.000 & 0.014 & 0.143 & 0.397 & -- & -- & 0.000 & 0.000 & 0.000 & 0.000 \\
		1000 & 0.000 & 0.001 & 0.046 & 0.258 & 0.538 & -- & 0.000 & 0.000 & 0.000 & 0.000 & 0.000 \\
		\midrule
		\midrule
		$n \quad \setminus \quad \Delta$ & $0.1$ & $0.2$ & $0.3$ & $0.4$ & $0.5$ & $0.1$ & $0.2$ & $0.3$ & $0.4$ & $0.5$ & \\
		\midrule
		\multicolumn{12}{l}{\textit{Panel D: $\mu = \mu_4$}}\\
		200  & 0.000 & 0.000 & 0.000 & 0.000 & 0.000 & -- & -- & -- & -- & -- & 0.000 \\
		500  & 0.000 & 0.000 & 0.000 & 0.000 & 0.000 & -- & -- & -- & -- & -- & 0.000 \\
		1000 & 0.000 & 0.000 & 0.000 & 0.000 & 0.000 & -- & -- & -- & -- & -- & 0.000 \\
		\midrule
		\multicolumn{12}{l}{\textit{Panel E: $\mu = \mu_5$}}\\
		200  & 0.000 & 0.000 & 0.000 & 0.000 & 0.000 & -- & -- & -- & -- & -- & 0.000 \\
		500  & 0.000 & 0.000 & 0.000 & 0.000 & 0.000 & -- & -- & -- & -- & -- & 0.000 \\
		1000 & 0.000 & 0.000 & 0.000 & 0.000 & 0.000 & -- & -- & -- & -- & -- & 0.000 \\
		\midrule
		\multicolumn{12}{l}{\textit{Panel F: $\mu = \mu_6$}}\\
		200  & 0.000 & 0.000 & 0.000 & 0.000 & 0.000 & -- & -- & -- & -- & -- & 0.000 \\
		500  & 0.000 & 0.000 & 0.000 & 0.000 & 0.000 & -- & -- & -- & -- & -- & 0.000 \\
		1000 & 0.000 & 0.000 & 0.000 & 0.000 & 0.000 & -- & -- & -- & -- & -- & 0.000 \\
		\bottomrule
	\end{tabular}
\end{table}

\begin{table}
	\caption{Empirical rejection rates and average estimated onset locations for different choices of $\sigma$ with $\delta= 0.2$, $n = 1000$ and locally stationary $\eps$.}
	\begin{tabular}{l|rrrrr|rrrrrr}
		\toprule
		& \multicolumn{5}{c|}{Empirical Rejection Rate} & \multicolumn{6}{c}{Estimated Onset Location} \\
		$\sigma \quad \setminus \quad \Delta$ & $0.8$ & $0.9$ & $1.0$ & $1.1$ & $1.2$ & $0.8$ & $0.9$ & $1.0$ & $1.1$ & $1.2$ & $\hat{\tau}_*$ \\
		\midrule
		\multicolumn{12}{l}{\textit{Panel A: $\mu = \mu_1$}}\\	
		$\sigma_0$ & 0.003 & 0.006 & 0.007 & 0.016 & 0.024 & 0.695 & 0.633 & 0.635 & 0.620 & 0.599 & 0.335 \\
		$\sigma_1$ & 0.002 & 0.004 & 0.006 & 0.006 & 0.010 & 0.696 & 0.699 & 0.702 & 0.693 & 0.666 & 0.216 \\
		$\sigma_2$ & 0.010 & 0.013 & 0.022 & 0.041 & 0.070 & 0.712 & 0.672 & 0.681 & 0.663 & 0.638 & 0.417 \\
		$\sigma_3$ & 0.001 & 0.001 & 0.002 & 0.002 & 0.003 & 0.556 & 0.545 & 0.599 & 0.591 & 0.530 & 0.221 \\
		\midrule
		\multicolumn{12}{l}{\textit{Panel B: $\mu = \mu_2$}}\\		
		$\sigma_0$ & 0.002 & 0.006 & 0.016 & 0.033 & 0.047 & 0.262 & 0.238 & 0.239 & 0.235 & 0.233 & 0.206 \\
		$\sigma_1$ & 0.001 & 0.002 & 0.007 & 0.010 & 0.019 & 0.259 & 0.241 & 0.254 & 0.244 & 0.258 & 0.157 \\
		$\sigma_2$ & 0.003 & 0.014 & 0.028 & 0.044 & 0.058 & 0.251 & 0.251 & 0.242 & 0.248 & 0.242 & 0.209 \\
		$\sigma_3$ & 0.000 & 0.001 & 0.002 & 0.007 & 0.009 & -- & 0.250 & 0.261 & 0.248 & 0.243 & 0.181 \\
		\midrule
		\multicolumn{12}{l}{\textit{Panel C: $\mu = \mu_3$}}\\	
		$\sigma_0$ & 0.000 & 0.000 & 0.001 & 0.129 & 0.516 & -- & -- & 0.399 & 0.388 & 0.381 & 0.400 \\
		$\sigma_1$ & 0.000 & 0.000 & 0.000 & 0.030 & 0.177 & -- & -- & -- & 0.388 & 0.381 & 0.399 \\
		$\sigma_2$ & 0.000 & 0.000 & 0.000 & 0.080 & 0.376 & -- & -- & -- & 0.386 & 0.379 & 0.398 \\
		$\sigma_3$ & 0.000 & 0.000 & 0.000 & 0.008 & 0.080 & -- & -- & -- & 0.389 & 0.380 & 0.396 \\
		\midrule
		\midrule
		$n \quad \setminus \quad \Delta$ & $0.1$ & $0.2$ & $0.3$ & $0.4$ & $0.5$ & $0.1$ & $0.2$ & $0.3$ & $0.4$ & $0.5$ & \\
		\midrule
		\multicolumn{12}{l}{\textit{Panel D: $\mu = \mu_4$}}\\
		$\sigma_0$ & 0.000 & 0.000 & 0.014 & 0.235 & 0.551 & -- & -- & 0.105 & 0.004 & 0.001 & 0.219 \\
		$\sigma_1$ & 0.000 & 0.000 & 0.014 & 0.116 & 0.286 & -- & -- & 0.015 & 0.001 & 0.000 & 0.117 \\
		$\sigma_2$ & 0.000 & 0.000 & 0.008 & 0.286 & 0.600 & -- & -- & 0.163 & 0.001 & 0.000 & 0.221 \\
		$\sigma_3$ & 0.000 & 0.000 & 0.025 & 0.154 & 0.497 & -- & -- & 0.007 & 0.000 & 0.000 & 0.104 \\
		\midrule
		\multicolumn{12}{l}{\textit{Panel E: $\mu = \mu_5$}}\\
		$\sigma_0$ & 0.000 & 0.000 & 0.004 & 0.014 & 0.023 & -- & -- & 0.399 & 0.424 & 0.397 & 0.311 \\
		$\sigma_1$ & 0.000 & 0.000 & 0.000 & 0.002 & 0.004 & -- & -- & -- & 0.420 & 0.474 & 0.195 \\
		$\sigma_2$ & 0.000 & 0.000 & 0.003 & 0.012 & 0.025 & -- & -- & 0.482 & 0.503 & 0.443 & 0.381 \\
		$\sigma_3$ & 0.000 & 0.000 & 0.000 & 0.000 & 0.001 & -- & -- & -- & -- & 0.370 & 0.250 \\
		\midrule
		\multicolumn{12}{l}{\textit{Panel F: $\mu = \mu_6$}}\\
		$\sigma_0$ & 0.000 & 0.000 & 0.000 & 0.004 & 0.010 & -- & -- & -- & 0.338 & 0.348 & 0.285 \\
		$\sigma_1$ & 0.000 & 0.000 & 0.000 & 0.000 & 0.000 & -- & -- & -- & -- & -- & 0.211 \\
		$\sigma_2$ & 0.000 & 0.000 & 0.000 & 0.000 & 0.003 & -- & -- & -- & -- & 0.567 & 0.357 \\
		$\sigma_3$ & 0.000 & 0.000 & 0.000 & 0.000 & 0.001 & -- & -- & -- & -- & 0.321 & 0.231 \\
		\bottomrule
	\end{tabular}
\end{table}

\begin{table}
	\caption{Empirical rejection rates and average estimated onset locations for different choices of $\eps$ with $\delta= 0.2$, $n = 1000$ and $\sigma = \sigma_2$}
	\begin{tabular}{l|rrrrr|rrrrrr}
		\toprule
		& \multicolumn{5}{c|}{Empirical Rejection Rate} & \multicolumn{6}{c}{Estimated Onset Location} \\
		$\eps \quad \setminus \quad \Delta$ & $0.8$ & $0.9$ & $1.0$ & $1.1$ & $1.2$ & $0.8$ & $0.9$ & $1.0$ & $1.1$ & $1.2$ & $\hat{\tau}_*$ \\
		\midrule
		\multicolumn{12}{l}{\textit{Panel A: $\mu = \mu_1$}}\\	
		iid & 0.002 & 0.008 & 0.025 & 0.065 & 0.135 & 0.730 & 0.718 & 0.653 & 0.652 & 0.628 & 0.655 \\
		loc\_stat & 0.010 & 0.013 & 0.022 & 0.041 & 0.070 & 0.712 & 0.672 & 0.681 & 0.663 & 0.638 & 0.417 \\
		ma & 0.003 & 0.004 & 0.016 & 0.026 & 0.035 & 0.658 & 0.661 & 0.606 & 0.587 & 0.570 & 0.626 \\
		ar & 0.000 & 0.001 & 0.001 & 0.001 & 0.001 & -- & 0.728 & 0.723 & 0.718 & 0.714 & 0.542 \\
		\midrule
		\multicolumn{12}{l}{\textit{Panel B: $\mu = \mu_2$}}\\		
		iid & 0.027 & 0.042 & 0.068 & 0.110 & 0.156 & 0.239 & 0.237 & 0.234 & 0.232 & 0.232 & 0.355 \\
		loc\_stat & 0.003 & 0.014 & 0.028 & 0.044 & 0.058 & 0.251 & 0.251 & 0.242 & 0.248 & 0.242 & 0.209 \\
		ma & 0.004 & 0.004 & 0.009 & 0.018 & 0.033 & 0.254 & 0.249 & 0.230 & 0.225 & 0.228 & 0.459 \\
		ar & 0.000 & 0.000 & 0.001 & 0.001 & 0.001 & -- & -- & 0.249 & 0.246 & 0.243 & 0.482 \\
		\midrule
		\multicolumn{12}{l}{\textit{Panel C: $\mu = \mu_3$}}\\	
		iid & 0.000 & 0.000 & 0.000 & 0.199 & 0.842 & -- & -- & -- & 0.386 & 0.380 & 0.400 \\
		loc\_stat & 0.000 & 0.000 & 0.000 & 0.080 & 0.376 & -- & -- & -- & 0.386 & 0.379 & 0.398 \\
		ma & 0.000 & 0.000 & 0.000 & 0.029 & 0.237 & -- & -- & -- & 0.382 & 0.378 & 0.398 \\
		ar & 0.000 & 0.000 & 0.000 & 0.001 & 0.008 & -- & -- & -- & 0.379 & 0.371 & 0.395 \\
		\midrule
		\midrule
		$n \quad \setminus \quad \Delta$ & $0.1$ & $0.2$ & $0.3$ & $0.4$ & $0.5$ & $0.1$ & $0.2$ & $0.3$ & $0.4$ & $0.5$ & \\
		\midrule
		\multicolumn{12}{l}{\textit{Panel D: $\mu = \mu_4$}}\\
		iid & 0.000 & 0.000 & 0.013 & 0.670 & 0.970 & -- & -- & 0.141 & 0.002 & 0.000 & 0.383 \\
		loc\_stat & 0.000 & 0.000 & 0.008 & 0.286 & 0.600 & -- & -- & 0.163 & 0.001 & 0.000 & 0.221 \\
		ma & 0.000 & 0.000 & 0.004 & 0.130 & 0.401 & -- & -- & 0.056 & 0.007 & 0.000 & 0.379 \\
		ar & 0.000 & 0.000 & 0.000 & 0.004 & 0.021 & -- & -- & -- & 0.000 & 0.000 & 0.395 \\
		\midrule
		\multicolumn{12}{l}{\textit{Panel E: $\mu = \mu_5$}}\\
		iid & 0.000 & 0.002 & 0.017 & 0.048 & 0.108 & -- & 0.450 & 0.519 & 0.477 & 0.465 & 0.622 \\
		loc\_stat & 0.000 & 0.000 & 0.003 & 0.012 & 0.025 & -- & -- & 0.482 & 0.503 & 0.443 & 0.381 \\
		ma & 0.000 & 0.000 & 0.000 & 0.001 & 0.009 & -- & -- & -- & 0.411 & 0.479 & 0.624 \\
		ar & 0.000 & 0.000 & 0.000 & 0.000 & 0.000 & -- & -- & -- & -- & -- & 0.546 \\
		\midrule
		\multicolumn{12}{l}{\textit{Panel F: $\mu = \mu_6$}}\\
		iid & 0.000 & 0.000 & 0.000 & 0.008 & 0.032 & -- & -- & -- & 0.431 & 0.371 & 0.545 \\
		loc\_stat & 0.000 & 0.000 & 0.000 & 0.000 & 0.003 & -- & -- & -- & -- & 0.567 & 0.357 \\
		ma & 0.000 & 0.000 & 0.000 & 0.003 & 0.006 & -- & -- & -- & 0.334 & 0.414 & 0.595 \\
		ar & 0.000 & 0.000 & 0.000 & 0.000 & 0.000 & -- & -- & -- & -- & -- & 0.551 \\
		\bottomrule
	\end{tabular}
\end{table}

\begin{table}
	\caption{Empirical rejection rates and average estimated onset locations for different choices of $n$ with $\delta= 0.2$, $\sigma = \sigma_2$ and locally stationary $\eps$.}
	\begin{tabular}{l|rrrrr|rrrrrr}
		\toprule
		& \multicolumn{5}{c|}{Empirical Rejection Rate} & \multicolumn{6}{c}{Estimated Onset Location} \\
		$n \quad \setminus \quad \Delta$ & $0.8$ & $0.9$ & $1.0$ & $1.1$ & $1.2$ & $0.8$ & $0.9$ & $1.0$ & $1.1$ & $1.2$ & $\hat{\tau}_*$ \\
		\midrule
		\multicolumn{12}{l}{\textit{Panel A: $\mu = \mu_1$}}\\	
		100  & 0.031 & 0.057 & 0.092 & 0.136 & 0.181 & 0.645 & 0.623 & 0.608 & 0.587 & 0.565 & 0.499 \\
		200  & 0.029 & 0.047 & 0.094 & 0.118 & 0.150 & 0.678 & 0.637 & 0.619 & 0.605 & 0.594 & 0.443 \\
		500  & 0.010 & 0.023 & 0.033 & 0.060 & 0.085 & 0.712 & 0.676 & 0.660 & 0.616 & 0.602 & 0.393 \\
		1000 & 0.010 & 0.013 & 0.022 & 0.041 & 0.070 & 0.712 & 0.672 & 0.681 & 0.663 & 0.638 & 0.417 \\
		\midrule
		\multicolumn{12}{l}{\textit{Panel B: $\mu = \mu_2$}}\\		
		100  & 0.031 & 0.044 & 0.075 & 0.101 & 0.136 & 0.314 & 0.299 & 0.341 & 0.344 & 0.338 & 0.357 \\
		200  & 0.026 & 0.039 & 0.051 & 0.071 & 0.101 & 0.283 & 0.285 & 0.300 & 0.296 & 0.285 & 0.274 \\
		500  & 0.014 & 0.024 & 0.037 & 0.053 & 0.078 & 0.257 & 0.250 & 0.248 & 0.244 & 0.252 & 0.218 \\
		1000 & 0.003 & 0.014 & 0.028 & 0.044 & 0.058 & 0.251 & 0.251 & 0.242 & 0.248 & 0.242 & 0.209 \\
		\midrule
		\multicolumn{12}{l}{\textit{Panel C: $\mu = \mu_3$}}\\	
		100  & 0.000 & 0.000 & 0.000 & 0.011 & 0.037 & -- & -- & -- & 0.377 & 0.370 & 0.403 \\
		200  & 0.000 & 0.000 & 0.003 & 0.020 & 0.104 & -- & -- & 0.379 & 0.372 & 0.373 & 0.400 \\
		500  & 0.000 & 0.000 & 0.001 & 0.051 & 0.246 & -- & -- & 0.375 & 0.383 & 0.378 & 0.398 \\
		1000 & 0.000 & 0.000 & 0.000 & 0.080 & 0.376 & -- & -- & -- & 0.386 & 0.379 & 0.398 \\
		\midrule
		\midrule
		$n \quad \setminus \quad \Delta$ & $0.1$ & $0.2$ & $0.3$ & $0.4$ & $0.5$ & $0.1$ & $0.2$ & $0.3$ & $0.4$ & $0.5$ & \\
		\midrule
		\multicolumn{12}{l}{\textit{Panel D: $\mu = \mu_4$}}\\
		100  & 0.000 & 0.000 & 0.000 & 0.011 & 0.098 & -- & -- & -- & 0.019 & 0.001 & 0.325 \\
		200  & 0.000 & 0.000 & 0.000 & 0.040 & 0.243 & -- & -- & -- & 0.002 & 0.000 & 0.283 \\
		500  & 0.000 & 0.000 & 0.006 & 0.154 & 0.464 & -- & -- & 0.022 & 0.001 & 0.000 & 0.254 \\
		1000 & 0.000 & 0.000 & 0.008 & 0.286 & 0.600 & -- & -- & 0.163 & 0.001 & 0.000 & 0.221 \\
		\midrule
		\multicolumn{12}{l}{\textit{Panel E: $\mu = \mu_5$}}\\
		100  & 0.000 & 0.000 & 0.000 & 0.000 & 0.005 & -- & -- & -- & -- & 0.568 & 0.472 \\
		200  & 0.000 & 0.000 & 0.000 & 0.008 & 0.019 & -- & -- & -- & 0.562 & 0.484 & 0.421 \\
		500  & 0.000 & 0.000 & 0.000 & 0.013 & 0.032 & -- & -- & -- & 0.476 & 0.452 & 0.383 \\
		1000 & 0.000 & 0.000 & 0.003 & 0.012 & 0.025 & -- & -- & 0.482 & 0.503 & 0.443 & 0.381 \\
		\midrule
		\multicolumn{12}{l}{\textit{Panel F: $\mu = \mu_6$}}\\
		100  & 0.000 & 0.000 & 0.000 & 0.001 & 0.005 & -- & -- & -- & 0.657 & 0.622 & 0.469 \\
		200  & 0.000 & 0.000 & 0.000 & 0.003 & 0.013 & -- & -- & -- & 0.365 & 0.406 & 0.422 \\
		500  & 0.000 & 0.000 & 0.001 & 0.004 & 0.008 & -- & -- & 0.381 & 0.468 & 0.450 & 0.347 \\
		1000 & 0.000 & 0.000 & 0.000 & 0.000 & 0.003 & -- & -- & -- & -- & 0.567 & 0.357 \\
		\bottomrule
	\end{tabular}
\end{table}